\theoremstyle{plain}
\newtheorem{theorem}{Theorem} 
\newtheorem{prop}[theorem]{Proposition}
\newtheorem{cor}[theorem]{Corollary}
\newtheorem{definition}{Definition}
\newtheorem{lemma}[theorem]{Lemma}
\newtheorem{remark}{Remark}
\newtheorem{defn/prop}[theorem]{Definition/Proposition}
\date{}
\theoremstyle{definition}
\title{Distances on a masure}
\author{Auguste \textsc{Hébert} \\Université de Lorraine, Institut Élie Cartan de Lorraine, F-54000 Nancy, France\\ UMR 7502,
auguste.hebert@univ-lorraine.fr}
\theoremstyle{definition}\newtheorem{thm*}{Theorem}
\theoremstyle{definition}\newtheorem{proposition*}{Proposition}
\makeatletter \@addtoreset{figure}{section}\makeatother
\newcommand{\R}{\mathbb{R}}
\newcommand{\A}{\mathbb{A}}
\newcommand{\N}{{\mathbb{\Z}_{\geq 0}}}
\newcommand{\Z}{\mathbb{Z}}
\newcommand{\Ne}{{\mathbb{Z}_{\geq 1}}}
\newcommand{\I}{\mathcal{I}}
\newcommand{\T}{\mathcal{T}}
\newcommand{\Id}{\mathrm{Id}}
\newcommand{\q}{\mathfrak{q}}
\newcommand{\s}{\mathfrak{s}}
\newcommand{\F}{\mathfrak{F}}
\newcommand{\D}{\mathcal{D}}
\newcommand{\FCC}{\mathscr{F}}
\newcommand{\EC}{\mathcal{E}}
\begin{document}
\maketitle

\begin{abstract}
A masure (also known as an affine ordered hovel) $\I$ is a generalization of the Bruhat-Tits building that is associated to a split Kac-Moody group $G$ over a nonarchimedean local field. This is a union of affine spaces called apartments. When $G$ is a reductive group, $\I$ is a building and there is a $G$-invariant distance inducing a norm on each apartment. In this paper, we study distances on $\I$ inducing the affine topology on each apartment. We construct distances such that each element of $G$ is a continuous automorphism of $\I$ and we study their properties (completeness, local compactness, ...).
\end{abstract}

\tableofcontents

\section{Introduction}

If $G$ is a split Kac-Moody group over a nonarchimedean local field, St{\'e}phane Gaussent and Guy Rousseau introduced a space $\I$ on which $G$ acts  and they called this set a ``masure'' (or an ``affine ordered hovel''), see \cite{gaussent2008kac}, \cite{rousseau2017almost}.  This construction generalizes the construction of the Bruhat-Tits building associated to a split reductive group over a field equipped with a nonarchimedean valuation made by Fran\c{c}ois Bruhat and Jacques Tits, see \cite{bruhat1972groupes} and \cite{bruhat1984groupes}. A masure is an object similar to a building. It is a union of subsets called ``apartments'', each one having a structure of  a finite dimensional real-affine space and an additional structure defined by hyperplanes (called walls) of this affine space.
 The group $G$ acts  transitively on the set of apartments. It induces affine maps on each apartment, sending walls on walls. We can also define sectors and retractions from $\I$ onto apartments with center a sector-germ, as in the case of Bruhat-Tits buildings. However there can be two points of $\I$ which do not belong to a common  apartment. Studying $\I$ enables one to get information on $G$ and this is one reason to study masures.

In this paper, we assume the valuation of the valued field to be discrete. Each Bruhat-Tits building $BT$ associated to a split reductive group $H$ over a   field equipped with a discrete nonarchimedean valuation is equipped with a distance $d$ such that $H$ acts isometrically on $BT$ and such that the restriction of $d$ to each apartment is a euclidean distance. These distances are important tools in the study of buildings.  We will show that we cannot  equip masures which are not buildings with distances having these properties but it seems natural to ask whether we can define distances on a masure  which:  \begin{itemize} 
\item induce the topology of finite-dimensional real-affine space on each apartment,

\item are compatible with the action of $G$,
\item are compatible with retractions centered at a sector-germ.

\end{itemize} We show that under the assumption of continuity of retractions, the metric space we have is  never complete nor locally compact (see Subsection~\ref{subsecConsequence metriques et topologiques}). We show that there is no distance on $\I$ such that the restriction to each apartment is a norm.  However,  for each sector-germ $\s$ of $\I$, we construct distances having the following properties (Corollary~\ref{corTopologie induite sur les appartements}, Lemma~\ref{lemDistances restricted to apartments containing s}, Corollary~\ref{corRetractions lipschitziennes} and Theorem~\ref{thmAction de G lipschitzienne}): \begin{itemize}
\item the topology induced on each apartment is the affine topology,
\item each retraction with the center $\s$ is $1$-Lipschitz,
\item each retraction with center a sector-germ of the same sign as $\s$ is Lipschitz,
\item each $g\in G$ is Lipschitz   when we  regard  it as an automorphism of $\I$.
\end{itemize}

We call them \textit{distances  of} \textit{positive} or \textit{of negative} \textit{type}, depending on the sign of $\s$.  We prove that all distances of positive type on a masure (resp. of negative type) are equivalent, where   two distances $d_1$ and $d_2$ are said to be equivalent if there exist $k,\ell\in \R_{>0}$ such that $kd_1\leq d_2\leq \ell d_1$ (this is Theorem~\ref{thmEquivalence des distances}). We thus get a \textit{positive topology} $\mathscr{T}_+$ and a \textit{negative topology} $\mathscr{T}_-$ defined by distances of $\pm$ types. We prove (Corollary~\ref{corNon egalite de T+ et T-}) that these topologies are different when $\I$ is not a building. When $\I$ is a building these topologies  agree with the usual topology on a building (Proposition~\ref{propEquivalence des distances pour un immeuble}).

Let $\I_0$ be the $G$-orbit in $\I$  of some special vertex.  If $\I$ is not a building, $\I_0$ is not discrete for both $\mathscr{T}_-$ and $\mathscr{T}_+$. We also prove that if $\rho$ is a retraction centered at a negative (resp. positive) sector-germ, $\rho$ is not continuous for $\mathscr{T}_+$ (resp. $\mathscr{T}_-$), see Proposition~\ref{propNon egalite de T+ et T-}. For these reasons we introduce \textit{mixed distances}, which are  sums of a distance of positive type  with a distance of negative type. We then have the following  (Theorem~\ref{thmResume sur les Dtheta2}): all the mixed distances on $\I$ are equivalent; moreover, if $d$ is a mixed distance and $\I$ is equipped with $d$  then:

\begin{itemize}
\item each $g:\I\rightarrow \I \in G$ is Lipschitz,
\item each retraction centered at a sector-germ is Lipschitz,
\item the topology induced on each apartment is the affine topology,

\item the set $\I_0$ is discrete.

\end{itemize}

The topology $\mathscr{T}_m$ associated to mixed distances is the initial topology with respect to the retractions of $\I$ (see Corollary~\ref{corDescription de la topologie combinee}).

We prove that $\I$ is contractible for $\mathscr{T}_+$, $\mathscr{T}_-$ and $\mathscr{T}_m$.

\medskip

Let us explain how  to define distances of positive or negative type. Let $\A$ be the standard apartment of $\I$ and $C^v_f$ be the fundamental chamber of $\A$. Let $\s$ be a sector-germ of $\I$. After applying some $g\in G$ to $A$, we may assume $A=\A$ and that $\s$ is the germ $+\infty$ of  
$C^v_f$ (or  of $-C^v_f$ but this case is similar).  Fix a norm $|\ .\ |$ on $\A$.  For every $x\in \I$, there exists an apartment $A_x$ containing $x$ and $+\infty$ (which means that $A_x$ contains a sub-sector of $C^v_f$). For $u\in \overline{C^v_f}$, we define $x+u$ as the translate of $x$ by $u$ in $A_x$.  If $u$ is chosen to be sufficiently dominant, $x+u\in C^v_f$. Therefore, for all $x,x'\in \I$, there exist $u,u'\in C^v_f$ such that $x+u=x'+u'$. We then define $d(x,x')$ to be the minimum of the $|u|+|u'|$ for such couples $u,u'$.

We thus obtain a distance for each sector-germ and for each norm $|\ .\ |$ on $\A$.  

\medskip 

This paper is organized as follows.

In Section~\ref{sect general frameworks}, we  review basic definitions and set up the notation.

In Section~\ref{secSplitting of apartments}, we show that if $\s$ is a sector-germ of $\I$, we can write each apartment as a finite union of closed convex subsets each of which is contained in an apartment $A$ containing $\s$. The most important case for us is when $A$ contains a sector-germ adjacent to $\s$. We then  can write $A$ as the union of two half-apartments, each contained in an apartment containing $\s$.    We conclude Section~\ref{secSplitting of apartments} with a series of properties that distances on $\I$ cannot satisfy.

In Section~\ref{secConstruction of signed distances}, we construct distances of positive and negative type on $\I$. We prove that all the distances of positive type (resp. negative type ) are equivalent. We then study them.

 In Section~\ref{secDistances combinees}, we first show that when $\I$ is not a building, $\mathscr{T}_+$ and $\mathscr{T}_-$ are different. Then we define mixed distances and study their properties.
 
In Section~\ref{secContractilite de I}, we show that $\I$ is contractible for the topologies $\mathscr{T}_+$, $\mathscr{T}_-$ and $\mathscr{T}_m$.

\medskip

\textbf{Acknowledgements} I would like to thank St{\'e}phane Gaussent, Michael Kapovich and Guy Rousseau for their comments on a previous version of this paper.

\section{Masures}\label{sect general frameworks}
In this section, we review the theory of masures. We restrict our study to semi-discrete masures which are thick of finite thickness and such that there exists a group acting strongly transitively on them (we define these notions at the end of the section). These properties are satisfied by masures associated to split Kac-Moody groups over nonarchimedean local fields (see \cite{rousseau2016groupes}). To avoid introducing too much notation, we do not treat the case of almost split Kac-Moody groups (see \cite{rousseau2017almost}). By adapting Lemma~\ref{lemEnclos d'un demi-appart}, one can prove that our results remain valid in the almost split case.

  We begin by defining the standard apartment. References for this section are \cite{kac1994infinite}, Chapter 1 and 3, \cite{gaussent2008kac} Section 2 and \cite{gaussent2014spherical} Section 1.

\subsection{Root generating system}\label{subsecDonnees radicielles}
A Kac-Moody matrix (or generalized Cartan matrix) is a square matrix $C=(c_{i,j})_{i,j\in I}$ with integer coefficients, indexed by  a finite set $I$ and satisfying: 
\begin{enumerate}
\item $\forall i\in I,\ c_{i,i}=2$

\item $\forall (i,j)\in I^2|i \neq j,\ c_{i,j}\leq 0$

\item $\forall (i,j)\in I^2,\ c_{i,j}=0 \Leftrightarrow c_{j,i}=0$.
\end{enumerate}

A root generating system is a $5$-tuple $\mathcal{S}=(C,X,Y,(\alpha_i)_{i\in I},(\alpha_i^\vee)_{i\in I})$ made of a Kac-Moody matrix $C$ indexed by $I$, of two dual free $\Z$-modules $X$ (of characters) and $Y$ (of co-characters) of finite rank $\mathrm{rk}(X)$, a family $(\alpha_i)_{i\in I}$ (of simple roots) in $X$ and a family $(\alpha_i^\vee)_{i\in I}$ (of simple coroots) in $Y$. They have to satisfy the following compatibility condition: $c_{i,j}=\alpha_j(\alpha_i^\vee)$ for all $i,j\in I$. We also suppose that the family $(\alpha_i)_{i\in I}$  (resp. $(\alpha_i^\vee)_{i\in I}$) freely generates a $\Z$-submodule of $X$ (resp. of $Y$)).

 We now fix a Kac-Moody matrix $C$ and a root generating system with the matrix $C$.

Let $\A{}=Y\otimes \R$. We equip $\A$ with the topology defined by its structure of a finite-dimensional real-vector space. Every element of $X$ induces a linear form on $\A{}$. We will regard $X$ as a subset of the dual $\A{}^*$ of $\A{}$: the $\alpha_i$, $i\in I$ are viewed as linear forms on $\A{}$. For $i\in I$, we define an involution $r_i$ of $\A{}$ by $r_i(v)=v-\alpha_i(v)\alpha_i^\vee$ for all $v\in \A{}$. Its  fixed points set is $\ker \alpha_i$. The subgroup of $\mathrm{GL}(\A{})$ generated by the $r_i$, $i\in I$ is denoted by $W^v$ and is called the \textit{Weyl group} of $\mathcal S$. The system $(W^v,\{r_i|\ i\in I\})$ is a Coxeter system.

Let $Q=\bigoplus_{i\in I} \Z \alpha_i$ and $Q^\vee=\bigoplus_{i\in I}\Z\alpha_i^\vee$. The groups $Q$  and $Q^\vee$ are called the \textit{root lattice} and the \textit{coroot-lattice}.

One defines an action of the group $W^v$ on $\A{}^*$  as follows: if $x\in \A{}$, $w\in W^v$ and $\alpha\in \A{}^*$ then $(w.\alpha)(x)=\alpha(w^{-1}.x)$. Let $\Phi=\{w.\alpha_i|(w,i)\in W^v\times I\}$ be the set of \textit{real roots}. Then $\Phi\subset Q$. Let $Q^+=\{\sum_{i\in I} n_i\alpha_i^\vee|(n_i)\in \N^I\}\subset Q$, $Q^-=-Q^+$, $\Phi^+=\Phi\cap Q^+$ and $\Phi^{-}=\Phi\cap Q^-$. Then $\Phi=\Phi^+\cup \Phi^-$. The elements of $\Phi^+$ (resp. $\Phi^-$) are called the \textit{real positive roots} (resp. \textit{real negative roots}).  Let $W^a=Q^\vee\rtimes W^v\subset\mathrm{GA}(\A{})$ be the \textit{affine Weyl group} of $\mathcal{S}$, where $\mathrm{GA}(\A{}\mathrm{})$ is the group of affine automorphisms of $\A{}$.

For $\alpha$ and $k\in \R$, one sets $D(\alpha,k)=\{x\in \A|\ \alpha(x)+k=0\}$, $D^\circ(\alpha,k)=\{x\in \A|\ \alpha(x)+k>0\}$ and $M(\alpha,k)=\{x\in \A|\ \alpha(x)+k=0\}$. One also sets $D(\alpha,+\infty)=D^\circ(\alpha,+\infty)=\A$ and $M(\alpha,+\infty)=\emptyset$. A \textit{wall} (resp. a half-apartment)  of $\A{}$ is a hyperplane (resp. a half-space) of the form $M(\alpha,k)$ (resp. $D(\alpha,k)$) for some $\alpha\in \Phi$ and $k\in \R$. The wall (resp. half-apartment) is said to be a \textit{true} wall (resp. a true half-apartment) if $k\in \Z$ and a \textit{ghost} wall if $k\notin \Z$. This choice of true walls means that the apartment (or the masure) is semi-discrete.

\subsection{Vectorial faces and Tits preorder}\label{subVectorial faces and Tits preorder}

\subsubsection*{Vectorial faces} Define $C_f^v=\{v\in \A{}|\  \alpha_i(v)>0,\ \forall i\in I\}$. We call it the \textit{fundamental chamber}. For $J\subset I$, one sets $F^v(J)=\{v\in \A{}|\ \alpha_i(v)=0\ \forall i\in J,\alpha_i(v)>0\ \forall i\in J\backslash I\}$. Then the closure $\overline{C_f^v}$ of $C_f^v$ is the union of the subsets $F^v(J)$ for $J\subset I$. The \textit{positive} (resp. \textit{negative}) \textit{vectorial faces} are the sets $w.F^v(J)$ (resp. $-w.F^v(J)$) for $w\in W^v$  and $J\subset I$. A \textit{vectorial face} is either a positive vectorial face or a negative vectorial face.
 We call \textit{a} \textit{positive chamber} (resp. \textit{negative}) every cone  of the form $w.C_f^v$ for some $w\in W^v$ (resp. $-w.C_f^v$).  By Section 1.3 of \cite{rousseau2011masures}, the action of $W^v$ on the set of positive chambers is simply transitive. The \textit{Tits cone} $\mathcal T$ is defined as the convex cone $\mathcal{T}=\bigcup_{w\in W^v} w.\overline{C^v_f}$. We also consider the negative cone $-\mathcal{T}$.

\subsubsection*{Tits preorder on $\A$} One defines a $W^v$-invariant relation $\leq$ on $\A{}$ by: $x\leq y\ \Leftrightarrow\ y-x\in \mathcal{T}$. This preorder need not be a partial order. For example, if $W^v$ is finite (i.e when the Kac-Moody matrix $C$ defining $\mathcal{S}$ is a  Cartan matrix), then $\T=\A$ and thus $x\leq y$ for all $x,y\in \A$. For an  arbitrary Kac-Moody matrix $C$,  every element $x$ of  $\bigcap_{i\in I} \ker(\alpha_i)$ satisfies $0\leq x\leq 0$ and in particular when $C$ is not invertible, $\leq$ is not a partial order.

Let $x,y\in \A$ be such that $x\neq y$. The ray with the base point $x$ and containing $y$ (or an interval $(x,y]$, $(x,y)$, $\ldots$)  is called \textit{preordered} if $x\leq y$ or $y\leq x$ and \textit{generic} if $y-x\in \pm\mathring \T$, the interior of $\pm \T$.

\subsection{Metric properties of $W^v$}

In this subsection we prove that when $W^v$ is infinite there does not  exist  a $W^v$-invariant norm on $\A{}$ and we also establish a density property of the walls of $\A{}$.

Two true walls $M_1$ and $M_2$ are said to be \textit{consecutive} if they are of the form $\alpha^{-1}(\{k\})$, $\alpha^{-1}(\{k\pm 1\})$ for some $\alpha\in \Phi$ and some $k\in \Z$.

\begin{prop}\label{propNon existence d'une distance W-invariante}

\begin{enumerate}

\item Suppose that there exists a  $W^v$-invariant norm on $\A{}$. Then $W^v$ is finite.\label{itemNon existence d'une distance W-invariante}

\item Let $|\ .\ |$ be a norm on $\A{}$, $d$ be the induced distance on $\A{}$ and suppose that $W^v$ is infinite. Then for  every $\epsilon>0$ there exists a vectorial  wall  $M_0$ such that for all consecutive true  walls $M_1$ and $M_2$ parallel to $M_0$, $d(M_1,M_2)<\epsilon$.\label{itemMurs denses}
\end{enumerate} 
\end{prop}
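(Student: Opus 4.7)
The plan has two parts, corresponding to the two items.

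For item \eqref{itemNon existence d'une distance W-invariante}, I would combine two observations. First, if $|\ .\ |$ is $W^v$-invariant, then $W^v$ lies in the group of norm-preserving linear maps of $\A$, which is a bounded (hence relatively compact) subgroup of $GL(\A)$; by averaging, its closure is contained in an orthogonal group for some euclidean inner product, so in particular $W^v$ lies in a compact subgroup of $GL(\A)$. Second, $W^v$ preserves the lattice $Y \subset \A$: indeed, for $y\in Y$ one has $\alpha_i(y)\in \Z$ (since $\alpha_i\in X$), and $\alpha_i^\vee\in Y$, so the formula $r_i(y)=y-\alpha_i(y)\alpha_i^\vee$ gives $r_i(Y)\subset Y$. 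Hence $W^v\subset GL_\Z(Y)$, which is a discrete subgroup of $GL(\A)$. A discrete subgroup of a compact group is finite, which finishes (1).

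For item \eqref{itemMurs denses}, my first step is to compute the distance between two consecutive parallel true walls. Given $\alpha\in \Phi$ and $k\in \Z$, set $M_j=M(\alpha,k+j)$ for $j\in\{0,1\}$. Since $|\alpha(x)-\alpha(y)|\leq \|\alpha\|^*\, |x-y|$ (where $\|\alpha\|^*=\sup_{|x|=1}|\alpha(x)|$ is the dual norm), and the inequality is saturated in the direction maximising $\alpha$, one obtains $d(M_0,M_1)=1/\|\alpha\|^*$. Thus statement \eqref{itemMurs denses} is equivalent to $\sup_{\alpha\in\Phi}\|\alpha\|^*=+\infty$.

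To establish this supremum, I would invoke the standard fact that, for the Coxeter system $(W^v,\{r_i\})$, the length function satisfies $\ell(w)=|\Phi^+\cap w^{-1}\Phi^-|$; since $W^v$ is infinite, $\ell$ is unbounded, so $\Phi^+$ is infinite. As $\Phi=\bigcup_{i\in I}W^v.\alpha_i$ and $I$ is finite, at least one orbit $W^v.\alpha_i$ is infinite. This orbit is contained in the discrete lattice $X\subset \A^*$, so it is unbounded with respect to any norm on $\A^*$, in particular $\|\ .\ \|^*$. Therefore, given $\epsilon>0$, one can choose $w\in W^v$ with $\|w.\alpha_i\|^*>1/\epsilon$; taking $\alpha=w.\alpha_i$ and $M_0=\ker\alpha$, the consecutive true walls $M(\alpha,k)$, $M(\alpha,k+1)$ have mutual distance $1/\|\alpha\|^*<\epsilon$.

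The only nontrivial ingredient is the implication \textit{$W^v$ infinite} $\Rightarrow$ \textit{$\Phi$ infinite}, which I would cite from the standard theory of Coxeter groups (e.g.\ \cite{kac1994infinite}); everything else is a direct computation or a discreteness/compactness argument.
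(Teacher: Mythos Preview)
Your argument is correct. For item~\ref{itemNon existence d'une distance W-invariante}, both you and the paper exploit that $W^v$ preserves the lattice $Y$: the paper argues concretely that the $W^v$-orbit of a $\Z$-basis of $Y$ is bounded under an invariant norm but, if $W^v$ is infinite, is an infinite subset of a lattice and hence unbounded; you package the same two facts as ``discrete subgroup of a compact group is finite''. (Your averaging remark is harmless but unnecessary: the isometry group of any norm on $\A$ is already closed and bounded in $GL(\A)$, hence compact.)

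For item~\ref{itemMurs denses} your route differs somewhat from the paper's. The paper does not compute $d(M_1,M_2)$ exactly; instead it fixes $u\in C^v_f$ with $|u|<\epsilon$ and observes that for $\beta=\sum_i \lambda_i\alpha_i\in\Phi^+$ of sufficiently large height $\sum_i\lambda_i$ one has $\beta(u)\geq 1$, so the short vector $u$ itself witnesses that consecutive walls of direction $\ker\beta$ are closer than $\epsilon$. Your formula $d(M_1,M_2)=1/\|\alpha\|^*$ combined with the discreteness of $\Phi\subset X$ in $\A^*$ gives a tidier quantitative statement, at the price of introducing the dual norm; the paper's argument is more hands-on and makes visible \emph{why} the roots grow, via the height. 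Both approaches need that $W^v$ infinite forces $\Phi^+$ infinite; the paper uses this without comment, while you cite it explicitly.
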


\begin{proof}

Let $B\subset Y^{\dim \A}$ be a $\Z$-basis of $Y$. Then the map $W^v\rightarrow Y^{\dim \A}$ sending each $w$ to $w.B$ is injective. Thus if $W^v$ is infinite, $\{w.B|\ w\in W^v\}$ is unbounded. Part~\ref{itemNon existence d'une distance W-invariante} follows.

Suppose that $W^v$ is infinite. Let $(\beta_n)\in \Phi_+^\N$ be an injective sequence. Let $\epsilon>0$ and $u\in C^v_f$ be such that $|u|<\epsilon$. For $n\in \N$, write $\beta_n=\sum_{i\in I}\lambda_{i,n} \alpha_i$, with $\lambda_{i,n}\in \N$ for all $(i,n)\in I\times \N$. One has \[\beta_n(u)=\sum_{i\in I}\lambda_{i,n} \alpha_i(u)\geq \big(\min_{i\in I} \alpha_i(u)\big)\sum_{i\in I} \lambda_{i,n} \rightarrow +\infty.\] Let $n\in \N$ be such that $\beta_n(u)\geq 1$ and $M_0=\beta_n^{-1}(\{0\})$. Then for all consecutive true walls $M_1$ and $M_2$ parallel to $M_0$, $d(M_1,M_2)<\epsilon$, which proves the proposition. $\square$

\end{proof}

\subsection{Filters and enclosure}\label{subFilters and enclosure map}

\subsubsection*{Filters}
A filter  on a set $\mathcal{E}$ is a nonempty set $\FCC$ of nonempty subsets of $\mathcal{E}$ such that, for all subsets $E$, $E'$ of $\mathcal{E}$, one has:\begin{itemize}
\item $E$, $E'\in \FCC$ implies $E\cap E'\in \FCC$ 

\item $E'\subset E$ and $E'\in \FCC$ implies $E\in \FCC$.

\end{itemize}

If $\mathcal{E}$ is a set and $\FCC,\FCC'$ are filters on $\mathcal{E}$, we define $\FCC\Cup \FCC'$ to be the filter $\{E\cup E'| (E,E')\in \FCC\times \FCC'\}$.

If $\FCC$ is a filter on a set $\mathcal{E}$, and $E$ is a subset of $\mathcal{E}$, one says that $\FCC$ contains $E$ if every element of $\FCC$ contains $E$. We denote it $\FCC\Supset E$. If $E$ is nonempty, the principal filter on $\EC$ associated with $E$ is the filter  $\FCC_{E,\EC}$ of subsets of $\EC$ containing $E$. 

 A filter $\FCC$ is said to be contained in another filter $\FCC'$: $\FCC\Subset  \FCC'$ (resp. in a subset $Z$ in $\mathcal{E}$: $\FCC\Subset Z$) if every set in $\FCC'$  is in $\FCC$ (resp. if $Z\in \FCC$). 
 
 These definitions of containment are inspired by the following facts. Let $\EC$ be a set, $\FCC$ be a filter on $\EC$ and $E,E'\subset \EC$. Then :\begin{itemize}
\item $E\subset E'$ if and only if $\FCC_{E,\EC}\Subset \FCC_{E',\EC}$,

\item $E\Subset \FCC$ if and only if $\FCC_{E,\EC}\Subset \FCC$,

\item $E\Supset \FCC$ if and only if $\FCC_{E,\EC}\Supset \FCC$.

 \end{itemize}
If $\FCC$ is a filter on a finite-dimensional real-affine space $\mathcal{E}$, its closure $\overline \FCC$ (resp. its convex hull) is the filter of subsets of $\mathcal{E}$ containing the closure (resp. the convex hull) of some element of $\FCC$. The \textit{support} of a filter $\FCC$ on $\mathcal{E}$ is the minimal affine subspace containing $\FCC$.

\subsubsection*{Enclosure of a filter} Let $\Delta$ be the set of all roots of the root generating system $\mathcal{S}$ defined in Chapter 1 of \cite{kac1994infinite}. We only recall that $\Delta\subset \A^*$ and that $\Delta\cap \R\Phi=\Phi$. 

 Let $\FCC$ be a filter on $\A$. The enclosure $\mathrm{cl}(\FCC)$ is the filter on $\A$ defined as follows. A set $E$ is in $\mathrm{cl}(\FCC)$ if there exists $(k_\alpha)\in (\Z\cup \{+\infty\})^\Delta$ satisfying:\[E\supset \bigcap_{\alpha\in\Delta} D(\alpha,k_\alpha)\Supset \FCC.\]

Suppose that we are in the reductive case, i.e that  $\mathcal{S}$ is associated to a Cartan matrix or equivalently that $\Phi$ is finite. Then $\Delta=\Phi$. Let $E\subset \A$ and $E'$ be the intersection of the true half-apartments containing $E$ ($E'$ is the enclosure of $E$ in the definition of \cite{bruhat1972groupes}). Then $\mathrm{cl}(E)=\FCC_{E',\A}$.

\subsection{Faces, sector-faces, chimneys and germs}\label{subFaces_sector-faces}

\subsubsection*{Sector-faces, sectors} A \textit{sector-face} $f$ of $\A$ is a set of the form $x+F^v$ for some vectorial face $F^v$ and some $x\in \A$. The point $x$ is its \textit{base point} and $F^v$ is its \textit{direction}.  The \textit{germ at infinity} $\F=germ_\infty(f)$  of $f$ is the filter composed of all the subsets  of $\A$ which contain an element of the form $x+u+F^v$, for some $u\in \overline{F^v}$. 

When $F^v$ is a vectorial chamber, one calls $f$ a \textit{sector}. The intersection of two sectors of the same direction is a sector of the same direction. A \textit{sector-germ} of $\A$ is a filter which is the germ at infinity of some sector of $\A$. We denote by $\pm \infty$ the germ of $\pm C^v_f$.

The sector-face $f$ is said to be spherical if $F^v\cap \pm\mathring{\T}$ is nonempty. A \textit{sector-panel} is a sector-face contained in a wall and spanning  it as an affine space. Sectors and sector-panels are spherical. 

 Let $\q_1$ and $\q_2$ be two sector-germs of the same sign. Let $C^v_1$, $C^v_2$ be the two vectorial chambers such that $\q_1=germ_\infty(C^v_1)$ and $\q_2=germ_\infty(C^v_2)$. We say that $\q_1$ and $\q_2$ are adjacent if $\overline{C^v_1}\cap \overline{C^v_2}$ contains some sector-panel. 
 
 Let $\q,\q'$ be two sector-germs of the same sign. A \textit{gallery} between $\q$ and $\q'$ is a sequence of sector-germs $\Gamma=(\q_1,\ldots,\q_n)$ such that $n\in \N$, $\q_1=\q$, $\q_n=\q'$ and for all $i\in \llbracket 1,n-1\rrbracket$, $\q_i$ and $\q_{i+1}$ are adjacent. The \textit{length} of $\Gamma$ is $n$. For every two sector germs $\q$ and $\q'$ of the same sign, there exists a gallery joining $\q$ and $\q'$. Indeed, let $C^v$ and $C'^v$ be the vectorial chambers such that $\q=germ_\infty(C^v)$ and $\q'=germ_\infty(C'^v)$. Let $w\in W^v$ be  such that $C'^v=w.C^v$. Let $w=r_{i_1}\ldots r_{i_k}$ be a writing of $w$, with $i_1,\ldots,i_k\in I$ . Then \[germ_\infty(C^v),germ_\infty(r_{i_1}.C^v),germ_\infty(r_{i_1}r_{i_2}.C^v),\ldots,germ_\infty(r_{i_1}\ldots  r_{i_k}.C^v)\] is a gallery from $\q$ to $\q'$.

\subsubsection*{Faces} Let $x\in \A$ and let $F^v$ be a vectorial face of $\A$. The \textit{face} $F(x,F^v)$ is the filter defined as follows: a set $E\subset \A$ is an element of $F(x,F^v)$ if, and only
 if, there exist $(k_\alpha),(k_\alpha')\in (\Z\cup \{+\infty\})^\Delta$ and a neighborhood  $\Omega$  of $x$ in $\A$ such that $E\supset \bigcap_{\alpha\in \Delta}\big(D(\alpha,k_\alpha)\cap D^\circ(\alpha,k_\alpha')\big)\supset \Omega \cap (x+F^v)$. A \textit{face} of $\A$ is a filter $F$ that can be written as $F=F(x,F^v)$, for some $x\in \A$ and some vectorial face $F^v$.

A \textit{chamber} is a face whose support is $\A$. A \textit{panel} is a face whose support is a wall.

In the reductive case (i.e when $\Phi$ is finite), we obtain the usual notion of faces: the  faces for the definition we gave are exactly the $\FCC_{F,\A}$, where $F$ is a face  of $\A$ equipped with its structure of a simplicial complex.

\subsubsection*{Chimneys}   Let $F$ be a face of $\A$ and $F^v$ be a vectorial face of $\A$. The  \textit{chimney} $\mathfrak{r}(F,F^v)$ is the filter $\mathrm{cl}(\FCC_{F+F^v,\A})$. A \textit{chimney} $\mathfrak{r}$ is a filter on $\A$ of the form $\mathfrak{r}= \mathfrak{r}(F,F^v)$ for some face $F$ and some vectorial face $F^v$. The enclosure of a sector-face is thus a chimney. The vectorial face $F^v$ is uniquely determined by $\mathfrak{r}$ (this is not necessarily the case of the face $F$) and one calls it the \textit{direction} of $\mathfrak{r}$.

Let $\mathfrak{r}$ be a chimney and $F^v$ be its direction. One says that $\mathfrak{r}$ is \textit{splayed} if $F^v$ is spherical (or equivalently if $F^v$ contains a generic ray, see Subsection~\ref{subVectorial faces and Tits preorder}). One says that $\mathfrak{r}$ is \textit{solid} if the pointwise stabilizer in $W^v$ of the direction of  the support of $\mathfrak{r}$ is finite. A splayed chimney is solid. 

Let $\mathfrak{r}=\mathfrak{r}(F,F^v)$ be a chimney. A shortening of $\mathfrak{r}$ is a chimney of the form $\mathfrak{r}(F+u,F^v)$, for some $u\in \overline{F^v}$.  The \textit{germ} \textit{at infinity} $\mathfrak{R}=germ_\infty(\mathfrak{r})$ of $\mathfrak{r}$ is the filter composed of all subsets of $\A$ which contain a shortening of $\mathfrak{r}$. A sector-germ is an example of a germ of a splayed chimney.

\subsection{Masure}\label{subsecmasure}

Let $\alpha\in \Phi$. We can write $\alpha=w.\alpha_i$ for some $i\in I$ and $w\in W^v$. Then $w.\alpha_i^\vee$ does not depend on the choice of $w$ and one denotes it $\alpha^\vee$.  An \textit{automorphism} of $\A$ is an affine bijection $\phi:\A\rightarrow \A$ stabilizing the set $\{\big(M(\alpha,k),\alpha^\vee\big)| (\alpha,k)\in \Phi\times \Z\}$. One has $W^a\subset W^v\ltimes Y\subset \mathrm{Aut}(\A)$, where $\mathrm{Aut}(\A)$ is the group of automorphisms of $\A$.

An apartment of type $\A$ is a set $A$ with a nonempty set $\mathrm{Isom}^w(\A,A)$ of bijections (called Weyl isomorphisms) such that if $f_0\in \mathrm{Isom}^w(\A,A)$ then $f\in \mathrm{Isom}^w(\A,A)$ if and only if, there exists $w\in W^a$ satisfying $f=f_0\circ w$. An isomorphism (resp. a Weyl isomorphism, a vectorially Weyl isomorphism) between two apartments $\phi:A\rightarrow A'$ is a bijection such that for every $f\in \mathrm{Isom}^w(\A,A)$ and $f'\in \mathrm{Isom}^w(\A,A')$, one has $f'\circ \phi\circ f^{-1}\in \mathrm{Aut}(\A)$ (resp. $f'\circ \phi\circ f^{-1}\in W^a$, $f'\circ \phi\circ f^{-1}\in (W^v\ltimes \A)\cap \mathrm{Aut}(\A)$).

Each apartment $A$ of type $\A$ can be equipped with  the structure of an affine space by using an isomorphism of apartments $\phi:\A\rightarrow A$.  We equip each apartment with its topology defined by its structure of a finite-dimensional real-affine space. 

We extend all the notions that are preserved by   $\mathrm{Aut(\A)}$ to each apartment. In particular, enclosures, sector-faces, faces, chimneys, germs of chimneys, ... are well defined in each apartment of type $\A$.   If $A$ is an apartment of type $\A$ and $x,y\in A$, then we denote by $[x,y]_A$ the closed segment of $A$ between $x$ and $y$. 

We say that an apartment contains  a filter if it contains at least one element of this filter. We say that a map fixes a  filter if it fixes at least one element of this  filter. 

We now give the definition of masures. These objects were introduced by Gaussent and Rousseau in \cite{gaussent2008kac} (they were initially called ``hovels''). This axiomatic definition was introduced by Rousseau in \cite{rousseau2011masures}.

\begin{definition}
A  masure of type $\A$ is a set $\mathcal{I}$ endowed with a covering $\mathcal{A}$ by subsets called apartments such that: 

(MA1) Each $A\in \mathcal{A}$ admits a structure of an apartment of type $\A$.

(MA2) If $F$ is a point, a germ of a preordered interval, a generic ray or a solid chimney in an apartment $A$ and if $A'$ is another apartment containing $F$, then $A\cap A'$ contains the enclosure $\mathrm{cl}_A(F)$ of $F$ and there exists a Weyl isomorphism from $A$ onto $A'$ fixing  $\mathrm{cl}_A(F)$.

(MA3) If $\mathfrak{R}$ is the germ of a splayed chimney and if $F$ is a face or a germ of a solid chimney, then there exists an apartment that contains $\mathfrak{R}$ and $F$.

(MA4) If two apartments $A$, $A'$ contain $\mathfrak{R}$ and $F$ as in (MA3), then there exists a Weyl isomorphism from $A$ to $A'$ fixing  $\mathrm{cl}_A(\mathfrak{R}\Cup F)$.

(MAO) If $x$, $y$ are two points contained in two apartments $A$ and $A'$, and if $x\leq_{A} y$ then the two segments $[x,y]_A$ and $[x,y]_{A'}$ are equal.
\end{definition}

We assume that there exists a group $G$ acting strongly transitively on  $\I$, which means that: \begin{itemize}
\item $G$ acts on $\I$,

\item $g.A$ is an apartment for every $g\in G$ and every apartment $A$,

\item for every $g\in G$ and every apartment $A$, the map $A\rightarrow g.A$ is an isomorphism of apartments,

\item all isomorphisms involved in the above axioms are induced by elements of $G$.
\end{itemize}

 We choose in $\I$ a ``fundamental'' apartment, that we identify with $\A$. As $G$ acts strongly transitively on $\I$,  the apartments of $\I$ are the sets $g.\A$ for $g\in G$. The stabilizer $N$ of $\A$ induces a group $\nu(N)$ of affine automorphisms of $\A$ and we assume that $\nu(N)=W^v\ltimes Y$.

All the isomorphisms that we will consider in this paper will be vectorially Weyl isomorphisms and we will say ``isomorphism'' instead of ``vectorially Weyl isomorphism''.

Throughout the paper, we will only consider masures $\I$ which are thick of finite thickness, that is masures  satisfying the following axiom:

(MAT)   for each panel $P$, the number of chambers whose closure contains $P$ is finite and greater than $2$.  

This definition coincides with the usual one when $\I$ is a building. 

An example of such a masure $\I$ is the masure associated to a split Kac-Moody group over a field equipped with a nonarchimedean discrete valuation constructed in \cite{gaussent2008kac} and in \cite{rousseau2016groupes}.

 A masure $\I$ is a building if and only if $W^v$ is finite, see \cite{rousseau2011masures} 2.2 6).

\subsection{Retractions centered at sector-germs}\label{subsecRetractions}

If $A$ and $B$ are two apartments, and $\phi:A\rightarrow B$ is an isomorphism of apartments fixing some  filter $\mathcal{X}$, one writes $\phi:A\overset{\mathcal{X}}{\rightarrow} B$. If $A$ and $B$ share a sector-germ $\s$, there exists a unique isomorphism of apartments $\phi:A \rightarrow B$ fixing $A\cap B$. Indeed, by (MA4), there exists an isomorphism $\psi:A\rightarrow B$ fixing $\s$. Let $x\in A\cap B$. By (MA4), $A\cap B$ contains the convex hull $\mathrm{Conv}(x,\s)$ in $A$ of $x$ and $\s$ and there exists an isomorphism of apartments $\psi':A\rightarrow B$ fixing $\mathrm{Conv}(x,\s)$.  Then $\psi'^{-1}\circ \psi :A\rightarrow A$ is an isomorphism of affine spaces fixing $\s$: $\psi'=\psi$. By definition $\psi'(x)=x$ and thus $\psi$ fixes $A\cap B$. The uniqueness is a consequence of the fact that the only affine morphism fixing some nonempty open set of $A$ is the identity. One denotes by $A\overset{A\cap B}{\rightarrow} B$ or by $A\overset{\s}{\rightarrow} B$ the unique isomorphism of apartments from $A$ to $B$ fixing $\s$. 

Fix a sector-germ  $\s$  of $\I$ and an apartment $A$ containing $\s$. Let $x\in \I$. By (MA3), there exists an apartment $A_x$ of $\I$ containing $x$ and $\s$. Let $\phi:A_x\overset{\s}{\rightarrow} A$ fixing $\s$. By \cite{rousseau2011masures} 2.6, $\phi(x)$ does not depend on the choices we made and thus we define $\rho_{A,\mathfrak{s}}(x)=\phi(x)$.

The map $\rho_{A,\s}$ is a retraction from $\I$ onto $A$. It only depends on $\s$ and $A$ and we call it the \textit{retraction onto $A$ centered at $\s$}. We denote by $\I\overset{\s}{\rightarrow} A$ the retraction onto $A$ fixing $\s$. We denote by $\rho_{\pm\infty}$ the retraction onto $\A$ centered at $\pm\infty$.

\subsection{Parallelism in $\I$}\label{subParallelism in I}
Let us explain briefly the notion of parallelism  in $\I$. This is done  in detail in \cite{rousseau2011masures} Section 3.

Let us begin with rays. Let $\delta$ and $\delta'$ be two generic rays in $\I$. By (MA3) and \cite{rousseau2011masures} 2.2 3) there exists an apartment $A$ containing  sub-rays of $\delta$ and $\delta'$  and we say that $\delta$ and $\delta'$ are \textit{parallel}, if these sub-rays are parallel in $A$. Parallelism is an equivalence relation. The parallelism class of a generic ray $\delta$ is denoted $\delta_\infty$ and is called its \textit{direction}.

We now review  the notion of parallelism for sector-faces. We refer to \cite{rousseau2011masures}, 3.3.4)) for the details.

\subsubsection*{Twin-building $\I^\infty$ at infinity} If $f$ and $f'$ are two spherical sector-faces in $\I$, there exists an apartment $B$ containing their germs $\F$ and $\F'$. One says that $f$ and $f'$ are parallel if $\F=germ_\infty(x+F^v)$ and $\F'=germ_\infty(y+F^v)$ for some $x,y\in B$ and for some vectorial face $F^v$ of $B$. Parallelism is an equivalence relation. The parallelism class of a sector-face germ $\F$ is denoted $\F_\infty$ and is called its \textit{direction}. 
We denote by  $\I^\infty$ the set of directions of spherical faces of $\I$. If $\s$ is a sector, all the sectors having the germ at infinity $\s$ have the same direction. We denote it $\s$ by abuse of notation.
 If $M$ is a wall of $\I$, its direction $M^\infty\subset \I^\infty$ is defined to be the set of germs at infinity $\F_\infty$ such that $\F=germ_\infty (f)$, with $f$ a spherical sector-face contained in $M$.

Let $\F_\infty\in \I^\infty$ (resp. let $\delta_\infty$ be the direction of a generic ray) and $A$ be an apartment. One says that $A$ contains $\F_\infty$ (resp. $\delta_\infty$) if $A$ contains some sector-face $f$ (resp. generic ray $\delta$) whose direction is $\F_\infty$ (resp. is $\delta_\infty$).

\begin{prop}\label{defnProp faces}
\begin{enumerate}

\item\label{itDef de x+f} Let $x\in \I$ and $\F_\infty\in \I^\infty$ (resp. $\delta_\infty$ be a generic ray direction). Then there exists a unique sector-face $x+\F_\infty$ (resp. $x+\delta_\infty$) based at $x$ and whose direction is $\F_\infty$ (resp. $\delta_\infty$). 

\item\label{itDescription de x+f} Let $A_x$ be an apartment containing $x$ and $\F_\infty$ (resp. $\delta_\infty$) (which exists by (MA3)). Let $f$ (resp. $\delta'$) be a sector-face (resp. a generic ray) of $A_x$ whose direction is $\F_\infty$ (resp. $\delta_\infty$). Then $x+\F_\infty$ (resp. $x+\delta_\infty$)  is the sector-face (resp. generic ray) of $A_x$ parallel to $f$ (resp. $\delta'$) and based at $x$.

\item\label{itAppart à l'infini} Let $B$ be an apartment containing $\F_\infty$ (resp. $\delta_\infty$). Then for all $x\in B$, $x+\F_\infty\subset B$ (resp. $x+\delta_\infty\subset B$).

\end{enumerate}

\end{prop}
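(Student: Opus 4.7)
The strategy is to use axioms (MA3) and (MA4) to reduce every assertion to a computation inside a single apartment, where it becomes elementary affine geometry: a sector-face of an apartment is determined by its base point and its vectorial direction, and within one apartment parallelism reduces to translation.

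For existence in item~1 and the description in item~2, pick any representative sector-face $f_0$ of $\F_\infty$ and set $\F = germ_\infty(f_0)$. Because $f_0$ is spherical, $\F$ is the germ of a splayed chimney, so (MA3) applied to $\F$ and to the face $\{x\}$ produces an apartment $A_x$ containing both. Let $f \subset A_x$ be any sector-face of $A_x$ realizing $\F$, and let $F^v$ be its vectorial direction inside $A_x$; set $x + \F_\infty := x + F^v \subset A_x$. This is a sector-face of $A_x$ based at $x$, and since $f$ and $x+F^v$ are translates of one another in $A_x$ they lie in the same parallelism class $\F_\infty$. Taking $A_x$ to be any apartment prescribed in item~2 immediately yields the description.

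For uniqueness and item~3, let $g_1 \subset A_1$ be any sector-face based at $x$ of direction $\F_\infty$. Apply (MA3) to $\{x\}$ and $germ_\infty(g_1)$ to obtain an apartment $A_1'$ containing both, and repeat the construction inside $A_1'$ to get a sector-face $g' \subset A_1'$ based at $x$ of direction $\F_\infty$. By (MA4) applied to the pair $(A_1, A_1')$ sharing $x$ and $germ_\infty(g_1)$, there is an isomorphism $\phi:A_1 \to A_1'$ fixing the enclosure $\mathrm{cl}_{A_1}(\{x\} \cup germ_\infty(g_1))$. The key technical point is that this enclosure contains $g_1$ entirely: the enclosure of a point together with a parallel splayed-chimney germ is precisely the chimney based at that point, whose underlying sector-face is $g_1$. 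Hence $\phi$ fixes $g_1$, so $g_1 \subset A_1'$; and inside the single apartment $A_1'$ two sector-faces based at $x$ of the same parallelism class are equal, giving $g_1 = g'$. Comparing $x + \F_\infty$ and $g'$ by the same argument then yields $g_1 = x + \F_\infty$. Item~3 follows by choosing $A_1 = B$ in this reasoning.

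The generic-ray case is entirely analogous: a generic ray sits inside a sector (since its direction lies in $\pm \mathring{\T}$, hence in some open vectorial chamber), whose germ at infinity is a splayed chimney, so (MA3) applies without modification; alternatively one may invoke (MA2), which explicitly mentions generic rays. The main obstacle is the enclosure computation behind the claim $g_1 \subset \mathrm{cl}_{A_1}(\{x\} \cup germ_\infty(g_1))$: in the Kac--Moody setting the enclosure is defined using all of $\Delta$, including imaginary roots, so this is not formal but rests on a standard Gaussent--Rousseau calculation.
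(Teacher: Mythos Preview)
The paper does not actually prove this statement: it cites Proposition~4.7.1 of \cite{rousseau2011masures} for items~1 and~2 (sector-face case), deduces item~3 from item~2, and refers to Lemma~3.2 of \cite{hebertGK} for rays. Your sketch is essentially a reconstruction of Rousseau's argument, and your identification of the enclosure inclusion $g_1\subset\mathrm{cl}_{A_1}(\{x\}\cup germ_\infty(g_1))$ as the key technical input is correct (it follows from convexity: any closed half-space containing both $x$ and a shortening $x+u+F^v$ contains all of $x+F^v$).

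There is, however, a gap in the last step of your uniqueness argument. The phrase ``by the same argument'' when comparing $x+\F_\infty$ (constructed in $A_x$) with $g'$ (constructed in $A_1'$) implicitly assumes that (MA4) can be applied to the pair $(A_x,A_1')$. But (MA4) requires a common splayed-chimney germ, whereas $A_x$ contains $germ_\infty(f_0)$ and $A_1'$ contains $germ_\infty(g_1)$; when $\F_\infty$ is not a chamber direction these are in general two \emph{distinct} germs in the parallelism class $\F_\infty$ (two parallel sector-panels lying in different hyperplanes have disjoint supports, hence different germs at infinity). So your argument only shows that the construction is well-defined among apartments sharing a \emph{fixed} germ in $\F_\infty$, not across the whole class. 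The gap is genuine but repairable --- for instance by first establishing the statement for sector-germs, where each parallelism class is a single germ (two sectors of the same direction meet in a sub-sector) and your argument goes through verbatim, and then bootstrapping to general spherical faces via a dominating sector-germ --- but this bridging step is missing from what you wrote.
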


\begin{proof}
The points~\ref{itDef de x+f} and~\ref{itDescription de x+f} for sector-faces are Proposition 4.7.1) of \cite{rousseau2011masures} and its proof. Point~\ref{itAppart à l'infini} is a consequence of~\ref{itDescription de x+f}. The statement for rays is analogous (see Lemma~3.2 of \cite{hebertGK}).$\square$

\end{proof}

Let $f,f'$ be sector-faces. One says that $f$ dominates $f'$ (resp. $f$ and $f'$ are opposite) if  $germ_\infty(f)=germ_\infty(x+F^v)$, $germ_\infty(f')=germ_\infty(x'+F'^v)$ for some $x,x'\in \I$ and $F^v,F'^v$ two vectorial faces of a same apartment of $\I$ such that $\overline {F^v}\supset F'^v$ (resp. such that $F'^v=-F^v$). By Proposition 3.2 2) and 3) of \cite{rousseau2011masures}, these notions extend to $\I^\infty$.

\section{Splitting of apartments}\label{secSplitting of apartments}

\subsection{Splitting of apartments in two half-apartments}\label{subsectDecoupage d'apparts en deux}
The aim of this section is to show that if $A$ is an apartment, $M$ is a wall of $A$, $\F$ is a sector-panel of $M^\infty$ and $\s$ is a sector-germ dominating $\F_\infty$, then there exist two opposite half-apartments $D_1$ and $D_2$ of $A$  such that their common wall is parallel to $M$ and such that for  both $i\in \{1,2\}$, $D_i$ and $\s$ are contained in some apartment. This is Lemma~\ref{lemDecoupage d'un appartement en 2}. This property is called ``sundial configuration'' in Section~2 of \cite{bennett2014axiomatic}. This section will enable us to show that for each choice of sign, the distances of positive types and of negative types are equivalent.

 For simplicity, we assume that $\Phi$ is reduced. This assumption can be dropped with minor changes to the next lemma.

\begin{lemma}\label{lemEnclos d'un demi-appart}
Let $\alpha\in \Phi$ and $k\in \R$. Then $\mathrm{cl}(D(\alpha,k))=\FCC_{D(\alpha,\lceil k\rceil),\A}$.
\end{lemma}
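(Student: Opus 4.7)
My plan is to prove the filter equality $\mathrm{cl}(D(\alpha,k))=D(\alpha,\lceil k\rceil)$ by viewing each side as the filter of supersets of a subset of $\A$ and checking both inclusions. For the easy direction, showing $D(\alpha,\lceil k\rceil)\in\mathrm{cl}(D(\alpha,k))$, I take the tuple $(k_\beta)_{\beta\in\Delta}\in(\Z\cup\{+\infty\})^\Delta$ with $k_\alpha=\lceil k\rceil$ and $k_\beta=+\infty$ for $\beta\neq\alpha$. Then $\bigcap_{\beta\in\Delta}D(\beta,k_\beta)=D(\alpha,\lceil k\rceil)$, and since $\lceil k\rceil\geq k$ we have $D(\alpha,\lceil k\rceil)\supset D(\alpha,k)$, as the definition of the enclosure requires.

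For the reverse direction, given any tuple $(k_\beta)\in(\Z\cup\{+\infty\})^\Delta$ with $\bigcap_\beta D(\beta,k_\beta)\supset D(\alpha,k)$, I want to show that the intersection already contains $D(\alpha,\lceil k\rceil)$; this will imply that every element of the filter $\mathrm{cl}(D(\alpha,k))$ contains $D(\alpha,\lceil k\rceil)$. I argue factor by factor. For each $\beta\in\Delta$ with $k_\beta<+\infty$, the inclusion $D(\beta,k_\beta)\supset D(\alpha,k)$ means that $\beta$ is bounded below by $-k_\beta$ on $D(\alpha,k)$. The key elementary observation is that if $\beta$ is not a positive real multiple of $\alpha$, then $\beta$ is unbounded below on $D(\alpha,k)$: either $\beta$ and $\alpha$ are linearly independent, in which case one perturbs along a direction in $\ker\alpha$ on which $\beta$ decreases, or $\beta\in\R_{<0}\alpha$, in which case following the direction where $\alpha\to+\infty$ forces $\beta\to-\infty$. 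So necessarily $\beta\in\R_{>0}\alpha$; combined with $\Delta\cap\R\Phi=\Phi$ and the reducedness assumption on $\Phi$, this forces $\beta=\alpha$. Then $D(\alpha,k_\alpha)\supset D(\alpha,k)$ yields $k_\alpha\geq k$, and $k_\alpha\in\Z$ upgrades this to $k_\alpha\geq\lceil k\rceil$, hence $D(\alpha,k_\alpha)\supset D(\alpha,\lceil k\rceil)$. All other factors equal $\A$, so the intersection indeed contains $D(\alpha,\lceil k\rceil)$.

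The main obstacle is precisely the step where one controls which $\beta\in\Delta$ can contribute a proper half-apartment. The combination of $\Delta\cap\R\Phi=\Phi$ (recalled at the start of the enclosure subsection) together with reducedness of $\Phi$ (explicitly assumed just before the lemma) is exactly what rules out contributions from imaginary roots and from real roots proportional to $\alpha$ with a coefficient other than $1$. The author's remark that the reducedness assumption can be dropped with ``minor changes'' would presumably amount to replacing $\lceil k\rceil$ by an appropriately rounded value computed in terms of the finite set of positive scalars $c$ such that $c\alpha\in\Delta$.
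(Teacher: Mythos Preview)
Your proof is correct and follows essentially the same route as the paper's own proof: both argue that any $(k_\beta)$ with $\bigcap_\beta D(\beta,k_\beta)\supset D(\alpha,k)$ must have $k_\beta=+\infty$ for $\beta\neq\alpha$ (using $\Delta\cap\R\Phi=\Phi$ and reducedness to rule out other positive multiples of $\alpha$), and then $k_\alpha\geq\lceil k\rceil$. The only difference is that you spell out explicitly the linear-algebra reason why $\beta\notin\R_{>0}\alpha$ implies $\beta$ is unbounded below on $D(\alpha,k)$, whereas the paper simply asserts that $D(\beta,\ell)\nsupseteq D(\alpha,k)$ in that case.
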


\begin{proof}
By definition of $\mathrm{cl}$, $D(\alpha,\lceil k\rceil )\in \mathrm{cl}\big(D(\alpha,k)\big)$ and hence $\mathrm{cl}\big(D(\alpha,k)\big)\Subset \FCC_{D(\alpha,\lceil k \rceil),\A}$.

 Let $E\in \mathrm{cl}(D(\alpha,k))$.  By definition, there exists $(k_\beta)\in (\Z\cup \infty)^{\Delta}$ such that $E\supset \bigcap_{\beta\in \Delta} D(\beta,k_\beta)\supset D(\alpha,k)$. Let $\beta\in \Delta\backslash \{\alpha\}$. As $\beta\notin \R_+\alpha$, $D(\beta,\ell)\nsupseteq D(\alpha,k)$ for all $\ell\in \Z$. Hence $k_\beta=+\infty$.

 As the family $\big(D(\alpha,\ell)\big)_{\ell\in \R}$ is ordered by inclusion,   $k_\alpha\geq \lceil k\rceil $. 

Therefore $\bigcap_{\beta\in \Delta}D(\beta,k_\beta)=D(\alpha,k_\alpha)\supset D(\alpha,\lceil k \rceil)$. Consequently, $\FCC_{D(\alpha,\lceil k\rceil),\A}\Subset \mathrm{cl}\big(D(\alpha,k)\big)$ and thus $\mathrm{cl}(D(\alpha,k))=\FCC_{D(\alpha,\lceil k\rceil ),\A}$. $\square$
\end{proof}

\begin{lemma}\label{lemIntersection de deux apparts contenant un demi-appart}
Let $A,B$ be two distinct apartments of $\I$ containing a half-apartment $D$. Then $A\cap B$ is a true half-apartment.
\end{lemma}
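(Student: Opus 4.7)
The plan is to write $A\cap B$ as a convex subset of $A$ containing $D$, argue that it must therefore be a half-apartment parallel to $D$, and then use axiom (MA4) to force its bounding wall to be a true wall.

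First, pick a sector-germ $\s$ contained in $D$; this is possible because $D$ is a half-apartment, so one can base a sector deep inside $D$ in the direction of any vectorial chamber on which the defining root of $D$ is non-negative. Since both $A$ and $B$ contain $\s$, Subsection~\ref{subsecRetractions} yields a unique isomorphism $\phi:A\to B$ fixing $A\cap B$ pointwise. The same subsection shows that for every $x\in A\cap B$, the convex hull in $A$ of $x$ and $\s$ lies in $A\cap B$; in particular $A\cap B$ is convex in $A$.

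Next, use a chart to identify $A$ with $\A$, so that $D = D(\alpha, k)$. Then $A\cap B$ is a convex and (one checks) closed subset of $\A$ containing the half-apartment $D(\alpha, k)$. A short convex-geometry argument then forces $A\cap B$ to be either $\A$ or a half-apartment $D(\alpha, c)$ with $c\geq k$. The case $A\cap B = \A$ gives $A\subseteq B$; since any injective affine self-map of $\A$ is bijective, this forces $A=B$, contradicting the hypothesis.

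It remains to prove $c\in\Z$. Suppose for contradiction $c\notin\Z$ and choose $x\in A\cap B$ sufficiently generic with $\alpha(x)+c$ small and positive. Applying (MA4) to the sector-germ $\s$ and the face $\{x\}$, and using the uniqueness of $\phi$, one finds that $A\cap B$ contains an element of the filter $\mathrm{cl}_A(\s\cup\{x\})$, and hence contains an enclosed set of the form $\bigcap_{\beta\in\Delta}D(\beta, k_\beta)\supseteq\s\cup\{x\}$. By Lemma~\ref{lemEnclos d'un demi-appart} and the reducedness of $\Phi$, the minimal valid $k_\alpha$ is $\lceil-\alpha(x)\rceil=\lceil c\rceil>c$, so this enclosed set extends in the $\alpha$-direction down to $\alpha=-\lceil c\rceil<-c$, yielding points outside $D(\alpha, c)=A\cap B$---a contradiction. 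The main technical obstacle is to verify, for suitably generic $x$, that points with $\alpha$-value near $-\lceil c\rceil$ genuinely lie in this enclosed set (that is, the other $\beta$-constraints do not eliminate them); this requires a careful local analysis near $x$ controlled by the splayed nature of $\s$.
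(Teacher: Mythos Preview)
Your proposal has a genuine gap, and it is the one you yourself flag at the end. Having obtained an enclosed set $P=\bigcap_{\beta\in\Delta}D(\beta,k_\beta)\subset A\cap B$ containing $x$ and a sub-sector of $\s$, you correctly note that $k_\alpha\geq\lceil c\rceil>c$; but this only says the \emph{single} half-space $D(\alpha,k_\alpha)$ extends past $M(\alpha,c)$. There is no reason the full intersection $P$ does. In the Kac--Moody setting $\Delta$ is infinite and walls accumulate: for any $\beta$ with $\beta(\alpha^\vee)>0$ and $\beta(x)$ close to an integer, the constraint $D(\beta,k_\beta)$ already blocks small moves from $x$ in the $-\alpha^\vee$ direction, and with infinitely many such $\beta$ available no genericity hypothesis on $x$ prevents $P$ from being pinched off before reaching $\alpha=-c$. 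The ``splayed nature of $\s$'' governs the shape of $P$ far out in the sector direction, not locally near $x$; it does not help here. (Incidentally, your invocation of Lemma~\ref{lemEnclos d'un demi-appart} is also misplaced: that lemma computes the enclosure of a half-apartment, not of $\s\cup\{x\}$.)

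The paper's proof sidesteps this completely by introducing a second direction at infinity. It chooses $\s$ so as to dominate a sector-panel $\F_\infty$ \emph{parallel to the wall of $D$}, and then also uses the opposite panel direction $\F'_\infty$. For each $x\in A\cap B$, Proposition~\ref{defnProp faces} gives $x+\s\subset A\cap B$ and $x+\F'_\infty\subset A\cap B$; applying (MA4) to the two splayed chimney germs $germ_\infty(x+\s)$ and $germ_\infty(x+\F'_\infty)$ then yields
\[
A\cap B\ \supset\ \mathrm{cl}\big(D(\alpha,-\alpha(x))\big)\ =\ D(\alpha,\lceil-\alpha(x)\rceil),
\]
the last equality being exactly Lemma~\ref{lemEnclos d'un demi-appart}. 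This single step delivers both the half-apartment shape and the integrality of the bounding parameter at once, with no closedness needed (your ``(one checks)'' is in fact not available here: the paper only establishes closedness of $A\cap B$ much later, in Corollary~\ref{corIntersection de deux apparts}, \emph{using} the present lemma) and no local analysis. The missing idea in your argument is precisely $\F'_\infty$: the closed convex hull of $germ_\infty(x+\s)$ and the opposite panel germ is the whole half-apartment $D(\alpha,-\alpha(x))$, whereas the convex hull of $\{x\}$ and $\s$ alone is essentially just a translated closed sector.
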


\begin{proof}
Using isomorphisms of apartments, we may assume $A=\A$. Let $\alpha\in \Phi$ and $k\in \R$ be such that $D=D(\alpha,k)$. Set $M_0=\alpha^{-1}(\{0\})$. Let $S$ be a sector of $\A$ based at $0$ and dominating some sector-panel $f\subset M_0$. Let $f'=-f$ and $\s$, $\F_\infty$ and $\F'_\infty$ be the directions of $S$, $f$ and $f'$. Let $x\in \A\cap B$. Then by Proposition~\ref{defnProp faces}~(\ref{itAppart à l'infini}), $\A\cap B\supset x+\s$ and $\A\cap B\supset  x+\F'_\infty$. As $germ_\infty(x+\s)$, $germ_\infty(x+\F'_\infty)$ are the germs of splayed chimneys, we can apply (MA4) and we get that $\A\cap B\Supset \mathrm{cl}\big(germ_\infty(x+\s) \Cup germ_\infty(x+\F'_\infty)\big)$. But 
 \[\mathrm{cl}\big(germ_\infty(x+\s)\Cup germ_\infty(x+\F'_\infty)\big)=\mathrm{cl}\bigg(\overline{\mathrm{Conv}}\big(germ_\infty(x+\s)\Cup germ_\infty(x+\F'_\infty)\big)\bigg),\] where $\overline{\mathrm{Conv}}$ denotes the closure of the convex hull. Therefore 
\[\mathrm{cl}\big(germ_\infty(x+\s)\Cup germ_\infty(x+\F'_\infty)\big)=\mathrm{cl}\big(D(\alpha,-\alpha(x)\big)=\FCC_{D(\alpha,\lceil -\alpha(x) \rceil),\A}\] (by Lemma~\ref{lemEnclos d'un demi-appart}). Thus 
$\A\cap B\supset D(\alpha,\lceil -\alpha(x) \rceil) \ni x$. Consequently,
 \[\A\cap B\supset \bigcup_{x\in \A\cap B} D(\alpha,\lceil -\alpha(x) \rceil) \supset \A\cap B.\]

Hence $\A\cap B=D(\alpha,\ell)$, where $\ell =\max_{x\in \A\cap B} \lceil -\alpha(x)\rceil\in \Z$, and the lemma follows. $\square$
\end{proof}

From now on, unless otherwise stated,  ``a half-apartment'' (resp.  ``a wall'') will implicitly refer to   ``a true half-apartment'' (resp. ``a true wall'').

\begin{lemma}\label{lemAutomorphisme d'appart fixant un mur}
Let $M$ be a wall of $\A$ and $w\in W^v\ltimes Y$ be an element fixing $M$. Then $w\in\{\Id,s\}$, where $s$ is the reflection of $W^v\ltimes Y$ with respect to $M$.
\end{lemma}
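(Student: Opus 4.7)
The plan is to reduce the statement to a fact about affine maps of $\A$ fixing an affine hyperplane: any such map is either the identity or the orthogonal-style reflection across that hyperplane, and then check that in our setup both options already belong to $W^v\ltimes Y$.

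Write $M=M(\alpha,k)$ for some $\alpha\in\Phi$ and $k\in\Z$ (we are in the true-wall case by the convention introduced just before the lemma). Decompose $w=(v,\tau)\in W^v\ltimes Y$, so $w(x)=v(x)+\tau$ for every $x\in\A$. For any two points $x,y\in M$ we have $w(y)-w(x)=v(y-x)=y-x$, hence $v$ fixes $M-M=\ker\alpha$ pointwise.

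The key step is to show that an element $v\in W^v$ fixing $\ker\alpha$ pointwise must lie in $\{\Id,r_\alpha\}$. Since $v$ is linear and fixes the hyperplane $\ker\alpha$, it acts on the one-dimensional quotient $\A/\ker\alpha$ by a scalar $\lambda$. If $\lambda=1$, then $v$ is the identity on both $\ker\alpha$ and on a complementary line, hence $v=\Id$. If $\lambda\neq 1$, the map $v-\Id$ has image inside the line $\R\alpha^\vee$ (using that $r_\alpha$ already has this property), which forces $v$ to coincide with $r_\alpha$ after checking the action on any vector $y\notin\ker\alpha$: both $v$ and $r_\alpha$ fix $\ker\alpha$ pointwise and act by the same non-trivial scalar on the quotient, so they agree. (This is the main technical point; it is the general statement that the pointwise stabilizer of a hyperplane in $\GL(\A)$ is isomorphic to $\R^\times$, and the only elements of finite order are $\pm\Id$ on the transverse line.)

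Once $v\in\{\Id,r_\alpha\}$ is established, we finish by a direct computation. If $v=\Id$, then $w$ is the translation by $\tau$; fixing any point of $M$ forces $\tau=0$, so $w=\Id$. If $v=r_\alpha$, pick any $x_0\in M$; the equation $w(x_0)=x_0$ becomes $r_\alpha(x_0)+\tau=x_0$, i.e.\ $\tau=\alpha(x_0)\alpha^\vee=-k\,\alpha^\vee$, and this is an element of $Y$ because $k\in\Z$ and $\alpha^\vee\in Y$. Comparing with $s(x)=x-(\alpha(x)+k)\alpha^\vee=r_\alpha(x)-k\alpha^\vee$, we obtain $w=s$. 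The only obstacle is the pointwise-stabilizer-of-a-hyperplane claim, which is completely elementary linear algebra once the affine structure is set up.
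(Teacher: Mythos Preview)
Your overall plan---decompose $w$ as a linear part $v\in W^v$ composed with a translation, identify $v$, then recover $\tau$---is exactly the paper's strategy, and your first and last paragraphs are fine. The gap is in the ``key step'': your purely linear-algebraic argument that $v\in\{\Id,r_\alpha\}$ is incorrect as written.

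Concretely: the claim ``the pointwise stabilizer of a hyperplane in $\GL(\A)$ is isomorphic to $\R^\times$'' is false. If $e_1,\ldots,e_{n-1}$ span $\ker\alpha$, then any map fixing $\ker\alpha$ pointwise sends $e_n$ to $b_1e_1+\cdots+b_{n-1}e_{n-1}+\lambda e_n$ with $\lambda\neq 0$; the stabilizer is an $(n-1)$-parameter family for each $\lambda$, not just $\R^\times$. In particular, when $\lambda=1$ you get nontrivial transvections $x\mapsto x+\alpha(x)y$ with $y\in\ker\alpha$, so ``$\lambda=1$ implies $v=\Id$'' fails. When $\lambda=-1$ the map $v$ is indeed an involution, but its $(-1)$-eigenline can be \emph{any} line transverse to $\ker\alpha$; nothing in your argument forces it to be $\R\alpha^\vee$, so ``both act by the same scalar on the quotient, hence they agree'' is wrong. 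You must use that $v\in W^v$, not merely $v\in\GL(\A)$, and your sketch never does this substantively.

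The paper supplies exactly the missing Coxeter-theoretic input. From your own observation that $v$ fixes $\ker\alpha=M_0$ pointwise, $v$ fixes a panel $P\subset M_0$ of some chamber $C$. Then $v(C)$ is a chamber whose closure still contains $P$; since exactly two chambers share a given panel, $v(C)\in\{C,s_0(C)\}$. Simple transitivity of $W^v$ on chambers then gives $v\in\{\Id,s_0\}$. This is short and uses precisely the structure your linear-algebra argument ignores.
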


\begin{proof}
One writes $w=\tau\circ u$, with $u\in W^v$ and $\tau$ a translation of $\A$. Then $u(M)$ is a wall parallel to $M$. Let $M_0$ be the wall parallel to $M$ containing $0$. Then $u(M_0)$ is a wall parallel to $M_0$ and containing $0$: $u(M_0)=M_0$. Let $C$ be a vectorial chamber adjacent to $M_0$. Then $u(C)$ is a chamber adjacent to $C$: $u(C)\in \{C,s_0(C)\}$, where $s_0$ is the reflection of $W^v$ with respect to $M_0$. After composing $u$ with $s_0$, we may assume  that $u(C)=C$ and thus $u=\Id$ (because the action of $W^v$ on the set of chambers is simply transitive). $\square$
\end{proof}

If $A$ is an apartment and $D,D'$ are half-apartments of $A$, we say that $D$ and $D'$ are opposite if $D\cap D'$ is a wall and one says that $D$ and $D'$ have opposite directions if their walls are parallel and $D\cap D'$ is not a half-apartment.

\begin{lemma}\label{lem3 appartements s'intersectant bien}
Let $A_1$, $A_2$, $A_3$ be distinct apartments. Suppose that $A_1\cap A_2$, $A_1\cap A_3$ and $A_2\cap A_3$ are half-apartments such that $A_1\cap A_3$ and $A_2\cap A_3$ have opposite directions. Let $M$ be the wall of $A_1\cap A_3$.

\begin{enumerate}
\item\label{itPropriété du Y} One has  $A_1\cap A_2\cap A_3=M$ where $M$ is the wall of $A_1\cap A_3$, and for all  $(i,j,k)\in \{1,2,3\}^3$ such that $\{i,j,k\}=\{1,2,3\}$, $A_i\cap A_j$ and $A_i\cap A_k$ are opposite. 

\item\label{itReflexions} Let $s:A_3\rightarrow A_3$ be the reflection with respect to $M$, $\phi_1:A_3\overset{A_1\cap A_3}{\rightarrow} A_1$, $\phi_2:A_3\overset{A_2\cap A_3}{\rightarrow} A_2$ and $\phi_3:A_2\overset{A_1\cap A_2}{\rightarrow} A_1$. Then the following diagram is commutative: \[\xymatrix{ A_3\ar[d]^{\phi_2}\ar[r]^{s} & A_3\ar[d]^{\phi_1}\\ A_2\ar[r]^{\phi_3}&A_1}\]

\end{enumerate}
\end{lemma}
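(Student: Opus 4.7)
The plan hinges on the composition $\psi := \phi_1^{-1} \circ \phi_3 \circ \phi_2 : A_3 \to A_3$, a vectorially Weyl automorphism of $A_3$. The first observation is that $\psi$ fixes $A_1\cap A_2\cap A_3$ pointwise: for $x$ in the triple intersection, $\phi_2$ fixes $x$ (as $x\in A_2\cap A_3$), then $\phi_3$ fixes $x$ (as $x\in A_1\cap A_2$), and finally $\phi_1^{-1}$ fixes $x$ (as $x\in A_1\cap A_3$).

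To prove (\ref{itPropriété du Y}), I identify $A_3$ with $\A$ so that $A_1\cap A_3 = \{\alpha\ge 0\}$ for some $\alpha\in\Phi$, with wall $M=\{\alpha=0\}$; the opposite-direction assumption forces $A_2\cap A_3 = \{\alpha\le c\}$ for some $c\in\Z$. I pull back coordinates on $A_1$ via $\phi_1$ and on $A_2$ via $\phi_2$, turning these maps into the identity on the respective shared half-apartments. Since $A_1\cap A_2$ is a half-apartment containing the slab $A_1\cap A_2\cap A_3 = \{0\le\alpha\le c\}$ (unbounded in all directions parallel to $M$), its wall must be parallel to $M$; matching the intersections with $A_1\cap A_3$ and $A_2\cap A_3$ pins it down as $\{\alpha\le c\}$ in the $A_1$-chart and $\{\alpha\ge 0\}$ in the $A_2$-chart. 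Consequently the isomorphism $\phi_3$ flips the orientation of $\alpha$, so its vectorial part is the reflection $s_\alpha$. But in chart form $\phi_3$ acts as the identity on the common $\A$-image of $A_1\cap A_2\cap A_3$ (where the three charts all agree with $A_3$), and an affine map with nontrivial vectorial part can fix at most a hyperplane; this forces $\{0\le\alpha\le c\}$ to have zero $\alpha$-extent, i.e., $c=0$. Hence $A_1\cap A_2\cap A_3 = M$, and a cyclic relabeling of $\{1,2,3\}$ yields the opposition statements for the remaining pairs.

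For (\ref{itReflexions}), $\psi$ now fixes $M$ pointwise, so by Lemma~\ref{lemAutomorphisme d'appart fixant un mur} it lies in $\{\Id,s\}$, where $s$ is the reflection of $A_3$ with respect to $M$. A vectorial-part computation distinguishes the two: $\phi_1$ and $\phi_2$ each fix half-apartments of full dimension and are therefore vectorially trivial, while $\phi_3$ has vectorial part $s_\alpha=s$ by the analysis above. Hence $\psi=s\ne\Id$, which rearranges to $\phi_1\circ s = \phi_3\circ\phi_2$, giving exactly the commutativity claimed.

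The principal subtlety I expect concerns the degenerate configuration $c<0$, in which $A_1\cap A_3$ and $A_2\cap A_3$ are disjoint in $A_3$ and the ``coordinates agree on the overlap'' step becomes vacuous. One must rule this out separately, either by invoking (MA3)–(MA4) applied to a splayed sector germ contained in $A_1\cap A_3$ to show that $A_1\cap A_2$ cannot remain a full half-apartment while lying entirely in the open complement of $A_1\cap A_3$ in $A_1$, or by tracking the translation part of the vectorial reflection in $\psi$ around the triangle and extracting a contradiction from Weyl-compatibility of the loop.
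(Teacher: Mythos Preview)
Your treatment of part~(\ref{itReflexions}) matches the paper's almost exactly: both form $\psi=\phi_1^{-1}\circ\phi_3\circ\phi_2$, observe (using part~(\ref{itPropriété du Y})) that $\psi$ fixes $M$ pointwise, invoke Lemma~\ref{lemAutomorphisme d'appart fixant un mur} to get $\psi\in\{\Id,s\}$, and rule out $\Id$ by tracking a half-apartment. The paper does this last step set-theoretically: with $D_1=A_2\cap A_3$, $D_2=A_1\cap A_3$, $D_3=A_1\cap A_2$, part~(\ref{itPropriété du Y}) gives $A_2=D_1\cup D_3$ and $A_1=D_2\cup D_3$, so $\phi_3(D_1)=D_2$ and hence $\psi(D_1)=D_2\neq D_1$. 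Your vectorial-part argument is equivalent, though note that $\psi^*\alpha=-\alpha$ alone does not force the linear part to equal $s_\alpha$; you need Lemma~\ref{lemAutomorphisme d'appart fixant un mur} first and then only the fact that the linear part is nontrivial.

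For part~(\ref{itPropriété du Y}) the paper does \emph{not} give a self-contained proof: it simply cites the ``Propri\'et\'e du~Y'' (Section~4.9 of \cite{rousseau2011masures}). Your direct argument is a genuine attempt at more, and the core idea for $c\geq 0$ is correct and clean: $\psi$ fixes the slab $\{0\le\alpha\le c\}$ pointwise yet has nontrivial linear part (from $\psi(\{\alpha\ge 0\})=\{\alpha\le c\}$), and an affine map with nontrivial linear part cannot fix a set of nonempty interior, forcing $c=0$. However the case $c<0$ that you flag is a real gap, not a technicality. When $(A_1\cap A_3)\cap(A_2\cap A_3)=\varnothing$, the triple intersection is empty, $\psi$ has no a~priori fixed points, and neither of your suggested patches is made precise. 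Ruling this out is exactly the content of the ``Propri\'et\'e du~Y'' and does require the masure axioms in a nontrivial way; without it your proof of~(\ref{itPropriété du Y}) is incomplete, and since your~(\ref{itReflexions}) relies on~(\ref{itPropriété du Y}), the whole argument is conditional on closing that gap.

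One further remark: your ``cyclic relabeling'' at the end of~(\ref{itPropriété du Y}) does not work as stated, since the hypotheses single out $A_3$ and are not symmetric in the three apartments. That said, once $c=0$ your explicit identification of $A_1\cap A_2$ as $\{\alpha\le 0\}$ in the $A_1$-chart and $\{\alpha\ge 0\}$ in the $A_2$-chart already gives the opposition statements directly, so no relabeling is needed.
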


\begin{proof}
Point~\ref{itPropriété du Y} is a consequence of ``Propri{\'e}t{\'e} du Y'' and of its proof (Section 4.9 of \cite{rousseau2011masures}).

Let $\phi=\phi_1^{-1}\circ \phi_3\circ \phi_2:A_3\rightarrow A_3$. Then $\phi$ fixes $M$.  Let $D_1=A_2\cap A_3$, $D_2=A_1\cap A_3$ and $D_3=A_1\cap A_2$. One has $\phi_3(A_2)=A_1=D_2\cup D_3$ and thus $\phi_3(D_1)=D_2$. One has $\phi_1^{-1}(D_2)=D_2$. Thus $\phi(D_1)=D_2$. We conclude with Lemma~\ref{lemAutomorphisme d'appart fixant un mur}. $\square$
\end{proof}

\begin{lemma}\label{lemGermes de quartiers opposés}
Let $\s$, $\s'$ be two opposite sector-germs of $\I$. Then there exists a unique apartment containing $\s$ and $\s'$.
\end{lemma}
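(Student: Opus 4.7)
The plan is to obtain existence from axiom (MA3) and to deduce uniqueness from the observation that two opposite sector-germs together ``fill up'' any apartment that contains them. For \emph{existence}, a sector-germ is by definition the germ of a splayed (hence solid) chimney, so (MA3) applied with $\mathfrak{R} = \s$ and $F = \s'$ directly yields an apartment containing both.

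For \emph{uniqueness}, let $A$ and $B$ be two apartments each containing $\s$ and $\s'$. It suffices to show $A \subset B$; the reverse inclusion is symmetric. Using the strong transitivity of $G$, I first reduce to the case $A = \A$, $\s = +\infty$, $\s' = -\infty$. The key observation is that both $A$ and $B$ contain a sector with germ $\s'$, say $S'_A \subset A$ and $S'_B \subset B$; since filters are stable under finite intersection, $S'_A \cap S'_B$ is a nonempty element of $\s'$, and unwinding the definition of the germ at infinity, it contains a genuine sub-sector $S_0 \subset A \cap B$ of direction $-C^v_f$, which extends arbitrarily far in that direction.

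Now given $x \in A$, I choose $y \in S_0$ with $\alpha_i(y) < \alpha_i(x)$ for every $i \in I$; then $x - y \in C^v_f$, so $x$ lies in the sector $y + C^v_f$. This sector coincides with the sector-face $y + \s$ from Proposition~\ref{defnProp faces}. Since $y \in B$ and $B$ contains the direction $\s$, Proposition~\ref{defnProp faces}(\ref{itAppart à l'infini}) gives $y + \s \subset B$, and therefore $x \in B$.

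The main (small) point to verify carefully is that $A \cap B$ really contains an honest sub-sector of direction $-C^v_f$ and not merely an abstract element of the filter $\s'$; this reduces to unwinding the definition of germ at infinity together with stability of filters under finite intersection. Once this is in hand, the argument collapses into a single application of Proposition~\ref{defnProp faces}(\ref{itAppart à l'infini}).
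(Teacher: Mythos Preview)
Your proof is correct and follows essentially the same approach as the paper: existence from (MA3), and uniqueness by showing that from any point common to both apartments one can reach every point of $A$ via sectors that, by Proposition~\ref{defnProp faces}(\ref{itAppart à l'infini}), must lie in $B$. The paper's version is slightly more streamlined---it works coordinate-free (no reduction to $\A$ via strong transitivity) and packages the covering step as the single identity $A=\bigcup_{y\in x+\s} y+\s'$ for any $x\in A\cap A'$---but the underlying mechanism is identical, and your explicit check that $A\cap B$ contains an honest sub-sector is a detail the paper leaves implicit.
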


\begin{proof}
The existence is a particular case of (MA3). Let $A$ and $A'$ be apartments containing $\s\Cup \s'$. Let $x\in  A\cap A'$. Then by Proposition~\ref{defnProp faces}~(\ref{itAppart à l'infini}), $A=\bigcup_{y\in x+\s} y+\s' \subset A\cap A'$, thus $A\subset A'$ and the lemma follows by symmetry. $\square$
\end{proof}

Recall the definition of $\I^\infty$ and of the direction $M^\infty$ of a wall $M$ from Subsection~\ref{subParallelism in I}. The following lemma is similar to Proposition 2.9.1) of \cite{rousseau2011masures}. This is analogous to the \textit{sundial configuration} of Section~2 of \cite{bennett2014axiomatic}.

\begin{lemma}\label{lemDecoupage d'un appartement en 2}
Let $A$ be an apartment, $M$ be a wall of $A$ and $M^\infty$ be its direction. Let $\mathfrak{F}_\infty$ be the direction of a sector-panel of $M^\infty$ and $\s$ be a sector-germ dominating $\F_\infty$ and not contained in $A$. Then there exists a unique pair $\{D_1,D_2\}$ of half-apartments of $A$  such that:\begin{itemize}
\item $D_1$ and $D_2$ are opposite with the common wall $M'$ parallel to $M$
 
\item for all $i\in\{1,2\}$, $D_i$ and $\s$ are in some apartment $A_i$.
\end{itemize}
 
Moreover: \begin{itemize}
\item $D_1$ and $D_2$ are true half-apartments
\item such apartments $A_1$ and $A_2$ are unique and if $D$ is the half-apartment of 
 $A_1$ opposite to $D_1$, then $D\cap D_2=D_1\cap D_2=M'$  and $A_2=D_2\cup D$. 
\end{itemize}
 \end{lemma}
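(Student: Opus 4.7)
The plan is to first establish the existence of $A_1$ and $D_1$, then produce $A_2$ and $D_2$ via a similar construction combined with a Y-configuration argument, and finally handle uniqueness.

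For $A_1$ and $D_1$, I would pick a sector-panel $f\subset A$ of direction $\F_\infty$ (which exists since $\F_\infty$ is a panel direction of $M^\infty$ and $M\subset A$), set $\F:=germ_\infty(f)$ (the germ of a splayed chimney, since sector-panels are spherical), and apply (MA3) to the splayed chimney germs $\s$ and $\F$ to obtain an apartment $A_1\supset\s\cup\F$. Since $A$ also contains the solid chimney germ $\F$, axiom (MA2) combined with Lemma~\ref{lemEnclos d'un demi-appart} shows that $\mathrm{cl}_A(\F)\subset A\cap A_1$ contains a true half-apartment of $A$ whose wall is parallel to $M$. Because $\s\not\subset A$ forces $A_1\ne A$, Lemma~\ref{lemIntersection de deux apparts contenant un demi-appart} then upgrades this to $A\cap A_1=D_1$, a true half-apartment of $A$ with wall $M'$ parallel to $M$. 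Let $D$ denote the opposite half of $A_1$ across $M'$; as $\s\not\subset D_1$ but $\s\subset A_1$, a sector representing $\s$ lies in $D$ up to shortening.

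For $A_2$ and $D_2$, let $D_2$ denote the half of $A$ opposite to $D_1$, fix a sector-panel $f_2\subset D_2$ of direction $\F_\infty$, and apply (MA3) to $\s$ and $germ_\infty(f_2)$ to produce an apartment $A_2$. The argument of the previous paragraph, applied to the pair $(A,A_2)$ in place of $(A,A_1)$, shows that $A\cap A_2$ is a true half-apartment of $A$ with wall parallel to $M$ and containing a shortening of $f_2\subset D_2$; in particular $A\cap A_2\ne D_1$. To identify the wall of $A\cap A_2$ with $M'$, I would next show that $A_1\cap A_2=D$: both apartments contain $\s$, and applying (MA4) to $\s$ together with a common sector-face germ in direction $\F_\infty$ (provided by a panel of any sector representing $\s$) yields that $A_1\cap A_2$ contains a true half-apartment of $A_1$, which must then equal $D$ since it contains $\s\not\subset D_1$. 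The triple $(A,A_1,A_2)$ now satisfies the hypotheses of Lemma~\ref{lem3 appartements s'intersectant bien}, whose Y-property forces the three walls of the pairwise intersections to coincide. In particular $A\cap A_2=D_2$, $A_2=D_2\cup D$, and $D\cap D_2=D_1\cap D_2=M'$.

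For uniqueness, if $\{\tilde A_i,\tilde D_i\}_{i=1,2}$ is another such pair, the existence argument applied to each $\tilde A_i$ produces a true half-apartment $A\cap\tilde A_i$ with wall parallel to $M$, and Lemma~\ref{lem3 appartements s'intersectant bien} applied to $(A,A_1,\tilde A_i)$ identifies this wall with $M'$; hence $\tilde D_i\in\{D_1,D_2\}$. For the uniqueness of the apartment containing $\s\cup D_i$: two such apartments would share the non-empty open subset $\mathrm{int}(D_i)$, and an isomorphism of apartments fixing a non-empty open set must be the identity, so $\tilde A_i=A_i$ after reindexing. The main technical difficulties I anticipate are (i) verifying that $\mathrm{cl}_A(\F)$ really contains a true half-apartment of $A$, which requires carefully unpacking the enclosure of a sector-panel germ in the semi-discrete, possibly non-reductive setting using Lemma~\ref{lemEnclos d'un demi-appart}, and (ii) establishing that $A_1\cap A_2$ is a half-apartment of $A_1$ rather than some larger or ill-behaved set, which is essential to invoke Lemma~\ref{lem3 appartements s'intersectant bien} and pin down $M'$ as the common wall.
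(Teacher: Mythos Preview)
There is a genuine gap in your existence argument. You claim that (MA2) together with Lemma~\ref{lemEnclos d'un demi-appart} shows that $\mathrm{cl}_A(\F)\subset A\cap A_1$ contains a true half-apartment of $A$. This is false: $\F$ is the germ of a sector-\emph{panel}, which lies inside a wall $M(\beta,k)$; since $M(\beta,k)=D(\beta,k)\cap D(-\beta,-k)$ already contains a shortening of $f$, the wall itself is an element of the filter $\mathrm{cl}_A(\F)$, so this filter cannot contain any half-apartment. Lemma~\ref{lemEnclos d'un demi-appart} computes the enclosure of a half-apartment, not of a sector-panel germ, and does not help here. Without a half-apartment in $A\cap A_1$ you cannot invoke Lemma~\ref{lemIntersection de deux apparts contenant un demi-appart}, and the whole construction of $D_1$ collapses. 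The same problem recurs when you try to show $A_1\cap A_2$ is a half-apartment using $\s$ together with a panel in direction $\F_\infty$: since $\s$ \emph{dominates} $\F_\infty$, the union $\bigcup_{y\in x+\s}(y+\F_\infty)$ is only a sector, not a half-apartment. The paper circumvents both issues by introducing the \emph{opposite} panel direction $\F'_\infty$ and the two sector-germs $\s'_1,\s'_2$ of $A$ dominating $\F'_\infty$; applying (MA3) to the pairs $(\s,\s'_i)$ produces apartments $A_i$ that each contain a full sector-germ of $A$, and then $\bigcup_{y\in x+\s'_i}(y+\F_\infty)$ is genuinely an open half-apartment inside $A\cap A_i$.

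Your uniqueness argument for the $A_i$ is also flawed. You write that two apartments containing $\s\cup D_i$ share the open set $\mathrm{int}(D_i)$, and that an isomorphism fixing a non-empty open set is the identity, ``so $\tilde A_i=A_i$''. But an isomorphism $A_i\to\tilde A_i$ restricting to the identity on $D_i$ in no way forces $A_i=\tilde A_i$; any two distinct apartments sharing a half-apartment give a counterexample. The paper's argument is different and essential: any apartment containing $D_i$ and $\s$ must contain the sector-germ $\s'_i\subset D_i$ of $A$, and one checks that $\s$ and $\s'_i$ are opposite, so Lemma~\ref{lemGermes de quartiers opposés} forces uniqueness of the apartment containing both.
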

 
\begin{proof}
Let us first show the existence of $D_1$ and $D_2$. Let $\F'_\infty$ be the sector-panel of $M^\infty$ opposite to $\F_\infty$. Let $\s_1'$ and $\s_2'$ be the sector-germs of $A$ containing $\F'_\infty$. For $i\in \{1,2\}$, let $A_i$ be an apartment of $\I$ containing $\s_i'$ and $\s$, which exists by (MA3).
Let $i\in\{1,2\}$ and $x\in A\cap A_i$. Then by Proposition~\ref{defnProp faces}~(\ref{itAppart à l'infini}), $x+\s_i'\subset A\cap A_i$ and the open half-apartment $E_i=\bigcup_{y\in x+\s_i'} y+\F_\infty\subset A\cap A_i$ is contained in $A$ and $A_i$. 

Suppose $A_1=A_2$. Then $A_1\supset \bigcup_{x\in E_1}x+\s_2'=A$ and thus $A_1=A\Supset \s$ . This is absurd and thus $A_1\neq A_2$.

The apartments $A_1,A_2$ contain $\F'_\infty$ and $\s$. Take $x\in A_1\cap A_2$. Then by Proposition~\ref{defnProp faces}~(\ref{itAppart à l'infini}), $A_1\cap A_2$ contains the open half-apartment $\bigcup_{y\in x+\s}y+\F'_\infty$. By Lemma~\ref{lemIntersection de deux apparts contenant un demi-appart}, $A_1\cap A_2$ is a half-apartment. Thus we can apply Lemma~\ref{lem3 appartements s'intersectant bien}: $A_1\cap A_2\cap A=M'$, where $M'$ is a wall of $A$ parallel to $M$. Set  $D_i=A\cap A_i$ for all $i\in \{1,2\}$ . Then $\{D_1,D_2\}$ fulfills the  requirements of the lemma.

Let $D_1'$, $D'_2$ be another pair of opposite half-apartments of $A$ such that for all $i\in \{1,2\}$, $D_i'$ and $\s$ are contained in some apartment $A_i'$ and such that $D_1'\cap D_2'$ is parallel to $M$.

We can assume $D_i'\Supset \s_i'$ for both $i\in \{1,2\}$. Let $\s'$ be the sector-germ of $A'_i$ opposite to $\s$. Then $\s'$ dominates $\F'_\infty$ and is contained in $D_i'$. Therefore $\s'=\s_i$. By Lemma~\ref{lemGermes de quartiers opposés}, $A_i'=A_i$, which proves the uniqueness of $\{D_1,D_2\}$ and $\{A_1,A_2\}$.

Moreover, by Proposition 2.9 2) of \cite{rousseau2011masures}, $D\cup D_2$ is an apartment. As $D\cup D_2 \Supset \s\Cup \s_2$, one has $D\cup D_2=A_2$, which concludes the proof of the lemma.  $\square$
\end{proof}

\subsection{Splitting of apartments}

In this subsection we mainly generalize Lemma~\ref{lemDecoupage d'un appartement en 2}. We show that if $\s$ is a sector-germ of $\I$ and if $A$ is an apartment of $\I$, then $A$ is the union of a finite number of convex closed subsets $P_i$ of $A$ such that for all $i$, $P_i$ and $\s$ are contained in some apartment. This is Proposition~\ref{lemDecoupages des apparts}.
\medskip

Let $\s,\s'$ be two sector-germs of the same sign. Let $A$ be an apartment containing $\s$ and $\s'$, which exists by (MA3). Let $d(\s,\s')$ be the length of a minimal gallery from $\s$ to $\s'$ (see Subsection~\ref{subFaces_sector-faces} for the definition of a gallery). By (MA4), $d(\s,\s')$ does not depend on the choice of $A$. 

Let $\s$ be a sector-germ and $A$ be an apartment of $\I$. Let $d_\s(A)$ be the minimum of the $d(\s,\s')$, where $\s'$ runs over the sector-germs of $A$ of the same sign as $\s$.
 Let $\D_A$ be the set of half-apartments of $A$. One sets $\mathcal{P}_{A,0}=\{A\}$ and for all $n\in \Ne$, $\mathcal{P}_{A,n}=\{\bigcap_{i=1}^nD_i|(D_i)\in (\D_A)^n\}$. 
The following proposition is very similar to Proposition 4.3.1 of \cite{charignon2010immeubles}.

\begin{prop}\label{lemDecoupages des apparts}
Let $A$ be an apartment of $\I$, $\s$ be a sector-germ of $\I$ et $n=d_\s(A)$. Then there exist $P_1,\ldots,P_k\in \mathcal{P}_{A,n}$, with $k\leq 2^n$ such that $A=\bigcup_{i=1}^k P_i$ and for each $i\in \llbracket 1,k \rrbracket$, $P_i$ and $\s$
 are contained in some apartment $A_i$ such that there exists an isomorphism $f_i:A_i\overset{P_i}{\rightarrow} A$.
\end{prop}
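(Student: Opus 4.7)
I would prove the proposition by induction on $n=d_\s(A)$. The base case $n=0$ is trivial: $A$ itself contains $\s$, so take $k=1$, $P_1=A\in\mathcal{P}_{A,0}$, $A_1=A$, and $f_1=\Id$.

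For the inductive step, fix $n\geq 1$ and assume the result for all apartments $B$ with $d_\s(B)\leq n-1$. Pick a sector-germ $\s'$ of $A$ realizing $d(\s,\s')=n$, and by (MA3) choose an apartment $A'$ containing both $\s$ and $\s'$. In $A'$, a minimal gallery of sector-germs from $\s$ to $\s'$ provides a sector-germ $\s''$ adjacent to $\s'$ with $d(\s,\s'')=n-1$. Adjacency of $\s'$ and $\s''$ in $A'$ means they share a common sector-panel direction $\F_\infty$, and the wall $M_0$ of $A'$ separating them has a direction $M^\infty$ containing $\F_\infty$. Since $\s'\subset A$, the apartment $A$ contains walls of direction $M^\infty$; pick any such wall $M$. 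Because $d_\s(A)=n>n-1=d(\s,\s'')$, we know $\s''\not\subset A$. Thus Lemma~\ref{lemDecoupage d'un appartement en 2} applies to the data $(A,M,\F_\infty,\s'')$ and yields two opposite true half-apartments $D_1,D_2$ of $A$ together with apartments $A_1,A_2$ such that $D_i\cup\s''\subset A_i$, and isomorphisms $\phi_i:A_i\overset{D_i}{\rightarrow}A$ fixing $A_i\cap A\supset D_i$.

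Since $\s''\subset A_i$ and $d(\s,\s'')=n-1$, we have $d_\s(A_i)\leq n-1$, so the inductive hypothesis applied to each $A_i$ produces at most $2^{n-1}$ subsets $Q_{i,j}\in\mathcal{P}_{A_i,n-1}$ covering $A_i$, each contained in an apartment $B_{i,j}$ together with $\s$, via an isomorphism $g_{i,j}:B_{i,j}\overset{Q_{i,j}}{\rightarrow}A_i$. Now set
\[
P_{i,j}\;=\;D_i\cap\phi_i(Q_{i,j}).
\]
Because $\phi_i$ is an apartment isomorphism, it maps $\mathcal{P}_{A_i,n-1}$ into $\mathcal{P}_{A,n-1}$, so $P_{i,j}$ is an intersection of $n$ half-apartments of $A$, i.e.\ $P_{i,j}\in\mathcal{P}_{A,n}$. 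The covering $A_i=\bigcup_j Q_{i,j}$ and $A=D_1\cup D_2$ together give $A=\bigcup_{i,j}P_{i,j}$, with at most $2\cdot 2^{n-1}=2^n$ pieces.

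It remains to exhibit, for each $(i,j)$, an apartment containing $P_{i,j}\cup\s$ and an isomorphism fixing $P_{i,j}$. Since $\phi_i$ pointwise fixes $D_i\supset P_{i,j}$, the set $P_{i,j}$ viewed in $A$ equals $\phi_i^{-1}(P_{i,j})$ viewed in $A_i$; but $\phi_i^{-1}(P_{i,j})\subset Q_{i,j}\subset B_{i,j}$, so $P_{i,j}\subset B_{i,j}$. Take $A_{i,j}=B_{i,j}$ and $f_{i,j}=\phi_i\circ g_{i,j}:A_{i,j}\rightarrow A$. On $P_{i,j}$, the map $g_{i,j}$ fixes $P_{i,j}\subset Q_{i,j}$ and $\phi_i$ then fixes $P_{i,j}\subset D_i$, so $f_{i,j}$ fixes $P_{i,j}$ pointwise; moreover $A_{i,j}\supset\s$. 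This completes the induction.

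\textbf{Main obstacle.} The delicate point is step~(4): verifying that the hypotheses of Lemma~\ref{lemDecoupage d'un appartement en 2} are met — specifically that $\F_\infty$ is indeed a sector-panel direction of some wall direction occurring in $A$, and that $\s''$ (not $\s$ itself) dominates $\F_\infty$ while lying outside $A$. The correct choice here is to use the \emph{last} step $\s_{n-1}\to\s_n=\s'$ of the gallery (equivalently the adjacency inside $A'$ across a wall with $\s'\subset A$), which guarantees both that $\F_\infty$ is visible from $A$ and that $\s''\not\subset A$ (else $d_\s(A)$ would be smaller).
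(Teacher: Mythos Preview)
Your proposal is correct and follows essentially the same inductive scheme as the paper: split $A$ into two half-apartments $D_1,D_2$ via Lemma~\ref{lemDecoupage d'un appartement en 2} applied to the sector-germ $\s''$ adjacent to a closest germ $\s'$ of $A$, apply the inductive hypothesis to the two apartments $A_i\supset D_i\cup\s''$, and compose the resulting isomorphisms with $\phi_i$. Your treatment is slightly more careful than the paper's in explicitly verifying the hypotheses of Lemma~\ref{lemDecoupage d'un appartement en 2} (the wall direction in $A$, the shared panel $\F_\infty$, and $\s''\not\subset A$), which the paper leaves implicit; your definition $P_{i,j}=D_i\cap\phi_i(Q_{i,j})$ agrees with the paper's $P_j^{(i)}\cap D_i$ since $\phi_i$ fixes $D_i$.
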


\begin{proof}
 We prove the proposition by the induction on $n$. This is clear if $n=0$. Let $n\in \N_{>0}$. Suppose this is true for every apartment $B$ such that $d_\s(B)\leq n-1$.

  Let $B$ be an apartment such that $d_\s(B)=n$. Let $\mathfrak{t}$ be a sector-germ of $B$ such that there exists a minimal gallery $\mathfrak{t}=\s_0,\ldots,\s_{n-1}=\mathfrak{s}$ from $\mathfrak{t}$ to $\mathfrak{s}$. By Lemma~\ref{lemDecoupage d'un appartement en 2}, there exist opposite half-apartments $D_1,D_2$ of $B$ such that for both $i\in \{1,2\}$, $D_i$ and $\s_1$ are contained in an apartment $B_i$. Let $i\in\{1,2\}$. One has $d_\s(B_i)=n-1$ and thus $B_i=\bigcup_{j=1}^{k_i} P_j^{(i)}$, with $k_i\leq 2^{n-1}$, for all $j\in \llbracket 1,k_i\rrbracket$,  $P_j^{(i)}\in \mathcal{P}_{B_i,n-1}$ and $\s,P_j^{(i)}$ is contained in some apartment $A_j^{(i)}$. One has \[B=D_1\cup D_2=B_1\cap D_1\cup B_2\cap D_2=\bigcup_{i\in \{1,2\},j\in \llbracket 1,k_i\rrbracket }P_j^{(i)}\cap D_i.\]
 
Let $i\in \{1,2\}$, $j\in \llbracket 1,k_i\rrbracket$ and $\phi_i:B_i\overset{B\cap B_i}{\rightarrow} B$.  Then $P_j^{(i)}\cap D_i=\phi_i(P_j^{(i)}\cap D_i)\in \mathcal{P}_{B,n}$ and $B_i\supset (P_j^{(i)}\cap D_i),\s$.

Let $f_i^{(j)}:A_i^{(j)}\overset{P_i^{(j)}}{\rightarrow} B_i$ and $f=\phi_i\circ f_i^{(j)}$. Then $f:A_j^{(i)}\overset{P_j^{(i)}\cap D_i}{\rightarrow} B$ and the proposition follows. $\square$
\end{proof}

We deduce from the previous proposition a corollary which was already known for masures associated to split Kac-Moody groups over fields equipped with a nonarchimedean discrete valuation by Section 4.4 of \cite{gaussent2008kac}:

\begin{cor}\label{corSegments germes de quartiers}
Let $\s$ be a sector-germ, $A$ be an apartment and $x,y\in A$. Then there exists $x=x_1,\ldots,x_k=y\in [x,y]_A$ such that $[x,y]_A=\bigcup_{i=1}^{k-1}[x_i,x_{i+1}]_A$ and such that for every $i\in \llbracket 1,k-1\rrbracket$, $\s$ and $[x_i,x_{i+1}]_A$ are contained in an apartment $A_i$ such that there exists an isomorphism $f_i:A\overset{[x_i,x_{i+1}]_{A_i}}{\rightarrow}A_i$.
\end{cor}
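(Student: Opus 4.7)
\medskip
\noindent\textbf{Proof plan.} The strategy is to invoke Proposition~\ref{lemDecoupages des apparts} to split the whole apartment $A$ as a finite union of closed convex pieces, each of which shares an apartment with $\s$, and then to restrict this decomposition to the segment $[x,y]_A$ by a one-dimensional argument.

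Concretely, first apply Proposition~\ref{lemDecoupages des apparts} with $n = d_\s(A)$ to obtain closed convex subsets $P_1,\dots,P_m \in \mathcal{P}_{A,n}$ covering $A$, apartments $B_j$ containing $P_j \cup \s$, and isomorphisms $g_j : B_j \overset{P_j}{\longrightarrow} A$. Being a finite intersection of half-apartments, each $P_j$ is closed and convex in $A$, so each $I_j := [x,y]_A \cap P_j$ is a closed (possibly empty) sub-segment of $[x,y]_A$, and $\bigcup_{j=1}^m I_j = [x,y]_A$. List in the affine order along $[x,y]_A$ all endpoints of the non-empty $I_j$, together with $x$ and $y$, as $x = x_1, x_2, \dots, x_k = y$. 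By construction no endpoint of any $I_j$ lies in the relative interior of a sub-segment $[x_i, x_{i+1}]_A$, so for every $i \in \llbracket 1, k-1 \rrbracket$ there is an index $j(i)$ with $[x_i, x_{i+1}]_A \subset I_{j(i)} \subset P_{j(i)}$.

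Then, for each $i$, set $A_i := B_{j(i)}$ and $f_i := g_{j(i)}^{-1} : A \to A_i$. Since $g_{j(i)}$ fixes $P_{j(i)}$ pointwise, so does $f_i$; in particular this identifies $[x_i, x_{i+1}]_A$ with $[x_i, x_{i+1}]_{A_i}$ as a common subset of $A$ and $A_i$, and gives the required isomorphism $f_i : A \overset{[x_i, x_{i+1}]_{A_i}}{\longrightarrow} A_i$. The inclusion $A_i \supset P_{j(i)} \cup \s \supset [x_i, x_{i+1}]_A \cup \s$ is then immediate from $B_{j(i)} \supset P_{j(i)} \cup \s$.

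The only point that is not entirely formal is the existence of such a $j(i)$, but this is just the elementary one-dimensional fact that a finite cover of an interval by closed convex subsets can be refined at the endpoints of its pieces. All the substantive content of the corollary is thus carried by Proposition~\ref{lemDecoupages des apparts}.
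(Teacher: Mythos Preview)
Your proof is correct and is exactly the deduction the paper intends: the corollary is stated without proof as an immediate consequence of Proposition~\ref{lemDecoupages des apparts}, and your argument (intersecting the convex pieces $P_j$ with the segment, ordering the endpoints, and inverting the isomorphisms $g_j$) is precisely the natural way to make this explicit.
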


\subsection{Restrictions on the distances}\label{subsecConsequence metriques et topologiques}

In this subsection, we show that some properties cannot be satisfied by distances on masures. If  $A$ is an apartment of $\I$,  we show that there exist apartments branching at  every wall of $A$ (this is Lemma~\ref{lemApparts branchant partout}). This implies that if $\I$ is not a building the interior of each apartment is empty for the distances we study. We write $\I$ as a countable union of apartments and then use Baire's Theorem to show that under a rather weak assumption of regularity for retractions, a masure cannot be complete nor locally compact for the distances we study.

\medskip
Let us show a slight refinement of Corollaire 2.10 of \cite{rousseau2011masures}:

\begin{lemma}\label{lemApparts branchant partout}
Let $A$ be an apartment of $\I$ and $D$ be a half-apartment of $A$. Then there exists an apartment $B$ such that $A\cap B=D$.
\end{lemma}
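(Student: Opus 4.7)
Let $M$ denote the wall of $D$ in $A$. The strategy is to produce an auxiliary apartment meeting $A$ along some half-apartment with wall $M$ and then, if the half-apartment so produced is not $D$, to flip sides by gluing half-apartments coming from two different apartments.

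First I would invoke Corollaire~2.10 of \cite{rousseau2011masures} at the wall $M$: this yields an apartment $A'$ of $\I$ such that $D':=A\cap A'$ is a half-apartment of $A$ with wall $M$. Since the only half-apartments of $A$ with wall $M$ are $D$ and its opposite, either $D'=D$ (and then $B:=A'$ already works), or $D'$ is the half-apartment of $A$ opposite to $D$ with the same wall. Assume the second case, and let $D''$ denote the half-apartment of $A'$ opposite to $D'$, so that $D''$ has wall $M$ and $A'=D'\cup D''$.

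Since $A\cap A'=D'$ and $D',D''$ are opposite in $A'$, the pair of half-apartments $D\subset A$ and $D''\subset A'$ satisfies the hypotheses of Proposition~2.9~2) of \cite{rousseau2011masures} (the same gluing statement used at the very end of the proof of Lemma~\ref{lemDecoupage d'un appartement en 2}); consequently $B:=D\cup D''$ is an apartment of $\I$. The inclusion $D\subset A\cap B$ is immediate, and conversely $A\cap D''\subset (A\cap A')\cap D''=D'\cap D''=M\subset D$, whence $A\cap B=(A\cap D)\cup(A\cap D'')=D$.

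The main point requiring care is the precise content of Corollaire~2.10 of \cite{rousseau2011masures}: my plan uses that it produces a branching apartment whose intersection with $A$ is a half-apartment with the \emph{prescribed} wall $M$. Should it only deliver some half-apartment with a wall parallel to $M$, one would first refine it (for instance by iterating Lemma~\ref{lemDecoupage d'un appartement en 2} to shift the branching wall onto $M$) before carrying out the flipping above.
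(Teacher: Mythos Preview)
Your flipping argument via Proposition~2.9~2) of \cite{rousseau2011masures} is sound: once you have an apartment $A'$ with $A\cap A'=D'$ the half opposite to $D$, the computation $D\cap D''=M$ and $A\cap D''\subset D'\cap D''=M$ is exactly right, and the paper itself uses the same gluing trick later (proof of Proposition~\ref{lemNon existence d'une distance induisant une norme sur chaque appartement}). So, \emph{provided} Corollaire~2.10 really delivers a branching apartment whose intersection with $A$ has the prescribed wall $M$, your proof goes through.

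The paper's proof is different and more self-contained: rather than appealing to Corollaire~2.10 and then possibly flipping, it uses the thickness hypothesis to choose a chamber $C\not\subset A$ whose closure contains a panel $P$ of $M$, applies Proposition~2.9~1) of \cite{rousseau2011masures} to produce an apartment $B$ containing both $D$ and $C$, and then invokes Lemma~\ref{lemIntersection de deux apparts contenant un demi-appart} to see that $A\cap B$ is a half-apartment; the presence of $C\notin A$ adjacent to $M$ forces this half-apartment to be exactly $D$. This route chooses the correct side from the outset, avoids any dependence on the precise wording of Corollaire~2.10, and is shorter. Your approach, by contrast, trades directness for a reduction to the cited corollary.

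One genuine weakness: your proposed fallback (``iterating Lemma~\ref{lemDecoupage d'un appartement en 2} to shift the branching wall onto $M$'') is not a workable plan. That lemma splits an apartment relative to a given sector-germ; it does not let you move a branching wall to a prescribed parallel position. If Corollaire~2.10 only yielded a wall parallel to $M$, you would essentially be back to proving the present lemma from scratch, and the thickness-based argument the paper uses is then the natural way out.
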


\begin{proof}
Let $M$ be the wall of $D$, $P$ be a panel of $M$ and $C$ be a chamber whose closure contains $P$ and which is not contained in $A$. By Proposition~2.9 1) of \cite{rousseau2011masures}, there exists an apartment $B$ containing $D$ and $C$. By Lemma~\ref{lemIntersection de deux apparts contenant un demi-appart}, $A\cap B=D$, which proves the lemma.
 $\square$
\end{proof}

\begin{prop}\label{lemNon existence d'une distance induisant une norme sur chaque appartement}
 Assume that there exists a distance $d_\I$ on $\I$ such that for  every apartment $A$, $d_{\I}|_{A^2}$ is induced by some norm. Then $\I$ is a building and $d_{\I}|_{\A^2}$ is $W^a$-invariant.
\end{prop}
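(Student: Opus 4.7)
The plan is to show that the norm $|\cdot|_\A$ inducing $d_\I|_{\A^2}$ must be $W^v$-invariant; Proposition~\ref{propNon existence d'une distance W-invariante}(\ref{itemNon existence d'une distance W-invariante}) will then force $W^v$ to be finite, hence $\I$ to be a building, and the $W^a$-invariance of $d_\I|_{\A^2}$ follows for free, since $W^a = W^v\ltimes Q^\vee$ acts on $\A$ by affine maps whose translation parts are automatically isometries of any norm. The crucial intermediate observation, which I will establish first, is that any isomorphism $\phi:A\overset{A\cap B}{\rightarrow} B$ between two apartments that fixes a half-apartment $D\subset A\cap B$ pointwise is automatically an affine isometry between $(A,d_\I|_A)$ and $(B,d_\I|_B)$: for $x,y\in D$ one has $d_\I(\phi(x),\phi(y))=d_\I(x,y)$, and since $D$ is a half-space, the set $D-D$ already spans the entire vector space underlying $A$, so this equality forces the linear part of $\phi$ to intertwine the two norms globally.

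Next, for each wall $M$ of $\A$, I will realize the affine reflection $s_M$ as a composition of three such norm-preserving isomorphisms. Using thickness of $\I$ together with Lemma~\ref{lemApparts branchant partout}, I produce a sector-germ $\s\not\subset\A$ dominating a sector-panel direction $\F_\infty$ of $M^\infty$. Applying Lemma~\ref{lemDecoupage d'un appartement en 2} to $\A$, $M$ and $\s$ then yields opposite half-apartments $D_1,D_2$ of $\A$ sharing a wall $M'$ parallel to $M$, together with two apartments $A_1,A_2$ both containing $\s$, with $A_i\cap\A=D_i$ and $A_1\cap A_2$ a half-apartment. Lemma~\ref{lem3 appartements s'intersectant bien}(\ref{itReflexions}) applied with $A_3=\A$ then factors $s_{M'}$ as $\phi_1^{-1}\circ\phi_3\circ\phi_2$, where each of the three maps $\phi_i$ fixes a half-apartment of its source apartment and is therefore an isometry by the central observation; hence $s_{M'}$ is an isometry of $(\A,d_\I|_\A)$. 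Since $M$ and $M'$ are parallel, $s_M$ differs from $s_{M'}$ by a translation in $\A$, and translations are trivially isometries of any norm-induced distance, so $s_M$ is an isometry as well.

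To conclude, the simple reflections $r_i$ generating $W^v$ are exactly the reflections through the walls $\ker\alpha_i$ of $\A$, so the previous step shows that $W^v$ acts on $\A$ by isometries of $|\cdot|_\A$. Proposition~\ref{propNon existence d'une distance W-invariante}(\ref{itemNon existence d'une distance W-invariante}) then gives $W^v$ finite, and the characterization of buildings among masures ($\I$ is a building iff $W^v$ is finite) yields that $\I$ is a building; the $W^a$-invariance of $d_\I|_{\A^2}$ then follows as explained in the first paragraph. I expect the main technical obstacle to be the careful construction of the sector-germ $\s\not\subset\A$ dominating $\F_\infty$: intuitively Lemma~\ref{lemApparts branchant partout} provides an apartment branching off $\A$ along $M$, and any sector-germ inside the half of this branching apartment opposite to $\A$ with appropriate chamber direction should do the job, but verifying the parallelism-class condition and non-containment of the germ in $\A$ requires some careful bookkeeping with germs of filters at infinity.
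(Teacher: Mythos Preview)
Your proposal is correct and follows essentially the same strategy as the paper: show that isomorphisms between apartments fixing a set with nonempty interior are isometries (your formulation via $D-D$ spanning is equivalent to the paper's ``nonempty interior'' argument), then factor each wall-reflection as a composition of three such isomorphisms via Lemma~\ref{lem3 appartements s'intersectant bien}, and conclude by Proposition~\ref{propNon existence d'une distance W-invariante}.

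The only notable difference is in how the three apartments are produced. You route through Lemma~\ref{lemDecoupage d'un appartement en 2}, which forces you to first manufacture a sector-germ $\s\not\subset\A$ dominating $\F_\infty$ (the step you flag as the main technical obstacle) and leaves you with a wall $M'$ merely parallel to $M$, requiring the extra translation argument. The paper bypasses this entirely: it applies Lemma~\ref{lemApparts branchant partout} directly to the given wall $M$ to get an apartment $A_2$ with $\A\cap A_2=D_1$, then builds $A_1=D_2\cup D_3$ (with $D_3$ the half of $A_2$ opposite $D_1$) using Proposition~2.9~2) of \cite{rousseau2011masures}, and applies Lemma~\ref{lem3 appartements s'intersectant bien} to $\A,A_1,A_2$. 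This hits $s_M$ on the nose and avoids both the construction of $\s$ and the detour through Lemma~\ref{lemDecoupage d'un appartement en 2}, so it is cleaner; but your version is logically sound and the extra work is minor.
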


\begin{proof}

Let $\s$ be a sector-germ and $A$, $B$ be two apartments containing $\s$. Let $\phi:A\overset{A\cap B}{\rightarrow} B$. Let us first prove that  $\phi:(A,d_\I)\rightarrow (B,d_\I)$ is an isometry. Let $d':A\times A\rightarrow \R_+$ be defined by $d'(x,y)=d_\I(\phi(x),\phi(y))$ for all $x,y\in A$. Then $d'$ is induced by some norm. Moreover $d'|_{(A\cap B)^2}=d_{\I}|_{(A\cap B)^2}$. As $A\cap B$ has nonempty interior, we deduce that $d'=d_\I$ and thus $\phi:(A,d_\I)\rightarrow(B,d_\I)$ is an isometry.

Let $M$ be a wall of $\A$, $D_1$ and $D_2$ be the half-apartments defined by $M$ and $s\in W^a$ be the reflection with respect to $M$. Let $A_2$ be an apartment of $\I$ such that $A\cap A_2=D_1$, which exists by Lemma~\ref{lemApparts branchant partout}. Let $D_3$ be the half-apartment of $B$ opposite to $D_1$. Then $D_3\cap D_2\subset D_3\cap \A\subset M$ and thus $D_2\cap D_3=M$. By Proposition 2.9 2) of \cite{rousseau2011masures}, $D_3\cup D_2$ is an apartment $A_1$ of $\I$. Let $\phi_2:\A\overset{\A\cap A_1}{\rightarrow} A_1$, $\phi_1:\A\overset{A\cap A_2}{\rightarrow} A_2$ and $\phi_3:A_1\overset{A_1\cap A_2}{\rightarrow} A_2$. Then by Lemma~\ref{lem3 appartements s'intersectant bien}, the following diagram is commutative: \[\xymatrix{ \A\ar[d]^{\phi_2}\ar[r]^{s} & \A\ar[d]^{\phi_1}\\ A_1\ar[r]^{\phi_3}&A_2}.\] 

 By the first part of the proof, $s$ is an isometry of $\A$ and thus $W^a$ is a group of isometries for $d_{\I}|_{\A^2}$. By Proposition~\ref{propNon existence d'une distance W-invariante}~(\ref{itemNon existence d'une distance W-invariante}), $W^v$ is finite and by \cite{rousseau2011masures} 2.2 6), $\I$ is a building. $\square$
\end{proof}

\begin{lemma}\label{lemApparts fermes quand une retraction est continue}
Let $\s$ be a sector-germ of $\I$ and $d$ be a distance on $\I$ inducing the affine topology on each apartment and such that there exists a continuous retraction $\rho$ of $\I$ centered at $\s$. Then each apartment containing $\s$ is closed.
\end{lemma}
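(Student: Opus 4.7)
The plan is to exploit the fact that a retraction $\rho$ centered at $\s$ restricts to an apartment isomorphism on any apartment containing $\s$, and then combine continuity of $\rho$ with the hypothesis that $d$ induces the affine topology on each apartment to identify the limit of a convergent sequence.

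More precisely, suppose $\rho = \rho_{A_0,\s}$ for some apartment $A_0$ containing $\s$. Let $A$ be any apartment containing $\s$. By the discussion of Subsection~\ref{subsecRetractions}, there is a unique apartment isomorphism $\phi := A \overset{\s}{\rightarrow} A_0$, and the very definition of $\rho_{A_0,\s}$ gives $\rho|_A = \phi$. Since $\phi$ is an affine isomorphism between the apartments $A$ and $A_0$ (each carrying the affine topology), both $\phi$ and $\phi^{-1}$ are continuous with respect to the affine topologies on $A$ and $A_0$.

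Now let $(x_n) \subset A$ be a sequence converging in $(\I,d)$ to some point $x \in \I$; to show $A$ is closed, it suffices to check $x \in A$. By continuity of $\rho$ for $d$, we have $\rho(x_n) \to \rho(x)$ in $(\I,d)$, and all these points lie in the apartment $A_0$. Since $d$ induces the affine topology on $A_0$, this convergence is convergence in $A_0$ for the affine topology. Applying the continuous map $\phi^{-1} : A_0 \to A$, and using $\phi^{-1}(\rho(x_n)) = \phi^{-1}(\phi(x_n)) = x_n$, we obtain that $x_n$ converges to $\phi^{-1}(\rho(x))$ in $A$ for the affine topology. Because $d$ also induces the affine topology on $A$, this convergence holds in $(\I,d)$ as well.

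Thus in $(\I,d)$ the sequence $(x_n)$ converges both to $x$ and to $\phi^{-1}(\rho(x)) \in A$; by uniqueness of limits in a metric space, $x = \phi^{-1}(\rho(x)) \in A$, which proves that $A$ is closed. The only mildly delicate point is the identification $\rho|_A = \phi$, which is a direct unwinding of the construction of retractions recalled in Subsection~\ref{subsecRetractions}; everything else is a formal play between continuity of $\rho$ and the equality of the two topologies on each apartment.
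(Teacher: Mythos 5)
Your proof is correct and takes essentially the same approach as the paper: both observe that the retraction composed with the canonical isomorphism between apartments containing $\s$ restricts to the identity on $A$, then use continuity of $\rho$, the hypothesis that $d$ induces the affine topology on each apartment, and uniqueness of limits in a metric space. The paper packages this slightly more compactly by defining $\rho_A = \phi\circ\rho$ as a retraction onto $A$ and noting $x_n = \rho_A(x_n) \to \rho_A(x)$, whereas you apply $\phi^{-1}$ to $\rho(x_n)$; the two formulations are equivalent.
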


\begin{proof}
Let $A$ be an apartment containing $\s$ and $B=\rho(\I)$. Let $\phi:B\overset{\s}{\rightarrow}A$ and $\rho_A:\I\overset{\s}{\rightarrow} A$. Then $\rho_A=\phi\circ \rho$ is continuous because $\phi$ is an affine map. Let $(x_n)\in A^\N$ be a converging sequence for $d$ and $x=\lim x_n$. Then $x_n=\rho_A(x_n)\rightarrow \rho_A(x)$ and thus $x=\rho_A(x)\in A$. $\square$
\end{proof}

\begin{prop}\label{propApparts d'interieur vide}
Suppose $\I$ is not a building. Let $d$ be a distance on $\I$ inducing the affine topology on each apartment. Then the interior of each apartment of $\I$ is empty.
\end{prop}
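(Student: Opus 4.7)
The plan is to show that no point $x\in A$ lies in the $d$-interior of $A$: for every $r>0$, I will exhibit a point of $\I\setminus A$ at $d$-distance less than $r$ from $x$. Fix a norm $|\ .\ |_A$ on $A$ (transported from $\A$ via an isomorphism of apartments). Because $d_{|A^2}$ induces the affine topology on $A$, there exists $\eta>0$ such that $|y-x|_A<\eta$ implies $d(x,y)<r/2$ for all $y\in A$.

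Next, since $\I$ is not a building, $W^v$ is infinite. Applying Proposition~\ref{propNon existence d'une distance W-invariante}~(\ref{itemMurs denses}) to the chosen norm, I obtain a vectorial wall direction $M_0$ such that consecutive true walls of direction $M_0$ are at $|\ .\ |_A$-distance less than $\eta/3$. The parallel walls of direction $M_0$ therefore partition $A$ into strips of width less than $\eta/3$, so there exists a true wall $M$ of $A$ of direction $M_0$ and a point $y\in M$ with $|y-x|_A<\eta/2<\eta$; consequently $d(x,y)<r/2$. This is the one non-routine step, and the key place where the hypothesis that $\I$ is not a building (i.e.\ $W^v$ is infinite) enters.

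I then produce an apartment that witnesses the branching at $M$. Let $D$ be one of the two half-apartments of $A$ bounded by $M$. By Lemma~\ref{lemApparts branchant partout}, there is an apartment $B$ with $A\cap B=D$. In particular $B\setminus A=B\setminus D$ is the open half-apartment of $B$ lying on the opposite side of $M$, and $y\in M$ is on its boundary. Since $d_{|B^2}$ induces the affine topology on $B$, there exists $z\in B\setminus D=B\setminus A$ with $d(y,z)<r/2$.

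The triangle inequality then gives $d(x,z)\leq d(x,y)+d(y,z)<r$, so $z\in B_d(x,r)\setminus A$. Hence no $d$-ball around $x$ is contained in $A$, and as $x\in A$ was arbitrary the interior of $A$ in $(\I,d)$ is empty. The whole argument reduces to the density of walls (Proposition~\ref{propNon existence d'une distance W-invariante}~(\ref{itemMurs denses})) combined with the branching of apartments (Lemma~\ref{lemApparts branchant partout}); the comparison between $d$ and the affine topology on each apartment is only used to pass from norm-estimates to $d$-estimates.
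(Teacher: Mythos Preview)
Your proof is correct and follows essentially the same approach as the paper: both arguments combine the density of true walls when $W^v$ is infinite (Proposition~\ref{propNon existence d'une distance W-invariante}~(\ref{itemMurs denses})) with the branching of apartments along any wall (Lemma~\ref{lemApparts branchant partout}), using the hypothesis on $d$ only to pass between the affine topology and the $d$-topology on each apartment. The only difference is presentational: the paper works directly with an arbitrary nonempty open set $V$ and shows $V\setminus A\neq\emptyset$, whereas you fix a point and a radius and produce a nearby point outside $A$; these are equivalent formulations of the same idea.
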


\begin{proof}
Let $V$ be a nonempty open set of $\I$. Let $A$ be an apartment of $\I$ such that $A\cap V\neq \emptyset$. By Proposition~\ref{propNon existence d'une distance W-invariante}~(\ref{itemMurs denses}), there exists a wall $M$ of $A$ such that $M\cap V\neq \emptyset$. Let $D$ be a half-apartment delimited by $M$. Let $B$ be an apartment such that $A\cap B=D$, which exists by Lemma~\ref{lemApparts branchant partout}. Then $B\cap V$ is an open set of $B$ containing $M\cap V$ and thus $E\cap V\neq \emptyset$, where $E$ is the half-apartment of $B$ opposite to $D$. Therefore $V\backslash A\neq \emptyset$ and we get the proposition. $\square$
\end{proof}

One sets $\I_0=G.0$ where $0\in\A$. This is the set of \textit{vertices of type 0}. Recall that  $\pm\infty=germ_\infty(\pm C^v_f)$ and that $\rho_{\pm\infty}:\I\overset{\pm\infty}{\rightarrow} \A$.

\begin{lemma}\label{lemIntersection de I0 et A}
One has $\I_0\cap\A=Y$.
\end{lemma}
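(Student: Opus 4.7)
My plan is to prove the two inclusions $Y \subset \I_0 \cap \A$ and $\I_0 \cap \A \subset Y$ separately. The first is essentially a restatement of the hypothesis on $N$; the second requires a short application of (MA2) combined with strong transitivity.

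For the inclusion $Y \subset \I_0 \cap \A$: given $y \in Y$, the translation $\tau_y$ of $\A$ lies in $W^v \ltimes Y = \nu(N)$, so there exists $n \in N$ with $\nu(n) = \tau_y$, and hence $n.0 = y$. Since $N \subset G$, this gives $y \in G.0 = \I_0$, while $y \in \A$ is clear.

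For the converse inclusion $\I_0 \cap \A \subset Y$: take $x \in \I_0 \cap \A$ and write $x = g.0$ for some $g \in G$. The apartment $g.\A$ contains $g.0 = x$, and of course $\A$ does too, so both apartments share the point $x$. Since points are among the filters allowed in the hypothesis of (MA2), I would apply it with $F = \{x\}$, $A = \A$ and $A' = g.\A$, obtaining a Weyl isomorphism $\phi : \A \to g.\A$ fixing $\mathrm{cl}_\A(\{x\})$; this in particular fixes $x$ itself, since every set belonging to this filter contains $x$. By strong transitivity, $\phi$ is induced by some $h \in G$, so that $h.\A = g.\A$ and $h.x = x$. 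Then $h^{-1}g$ satisfies $(h^{-1}g).\A = h^{-1}(g.\A) = h^{-1}(h.\A) = \A$, so $h^{-1}g \in N$, while $(h^{-1}g).0 = h^{-1}.(g.0) = h^{-1}.x = x$. Consequently $x \in N.0 = \nu(N).0 = (W^v \ltimes Y).0 = Y$, where the last equality uses that $W^v$ fixes the origin.

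The only step demanding a moment of care is passing from the abstract Weyl isomorphism produced by (MA2) to a genuine element of $G$ realizing it; but this is precisely the content of the strong transitivity hypothesis spelled out in Subsection~\ref{subsecmasure}, so no serious obstacle arises.
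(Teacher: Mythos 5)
Your argument is correct and follows essentially the same route as the paper: apply (MA2) with the point $x$ lying in both $\A$ and $g.\A$, realize the resulting Weyl isomorphism by an element of $G$ via strong transitivity, and conclude that $x$ lies in the orbit $\nu(N).0 = (W^v\ltimes Y).0 = Y$. The only cosmetic differences are that you spell out the (easy) inclusion $Y\subset \I_0\cap\A$, which the paper leaves implicit, and that you orient the isomorphism $\phi$ from $\A$ to $g.\A$ rather than the reverse, which just exchanges $h$ for $h^{-1}$.
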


\begin{proof}
Let $\lambda\in \I_0\cap \A$. Then $\lambda=g.0$  for some $g\in G$. By (MA2), there exists $\phi:g.\A\rightarrow \A$ fixing $\lambda$. Then $\lambda=\phi(g.0)$ and $\phi\circ g|_{\A}:\A\rightarrow \A$ is an automorphism of apartments. Let $h\in G$ inducing $\phi$ on $g.\A$. Then $h.g\in N$, hence $(h.g)|_{\A}\in \nu(N)=W^v\ltimes Y$ (by the end of Subsection~\ref{subsecmasure}) and thus $\lambda=h.g.0\in Y.$ $\square$
\end{proof}

\begin{lemma}\label{I0 denombrable}
The set $\I_0$ is countable.
\end{lemma}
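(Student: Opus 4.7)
The plan is to show that $\I$ has only countably many chambers; the countability of $\I_0$ then follows, since each $v\in\I_0$ is a vertex of the closure of some chamber (indeed, if $v=g.0$ and $C$ is a chamber of $\A$ with $0$ in its closure, then $v$ is a vertex of $g.C$), and each chamber closure is a bounded alcove of some apartment, containing only finitely many vertices of type $0$.

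To bound the number of chambers, I consider the graph $\Gamma$ whose vertices are the chambers of $\I$ and whose edges join pairs of chambers sharing a panel. The assumption that $\I$ is thick of finite thickness says precisely that each panel is a face of only finitely many chambers, so $\Gamma$ is locally finite. The next step is to show that $\Gamma$ is connected. Given chambers $C,C'$ of $\I$, (MA3) applied with the germ of splayed chimney $+\infty$ yields apartments $A\supset C\cup\{+\infty\}$ and $A'\supset C'\cup\{+\infty\}$; by (MA4) applied to the pair $(+\infty, C)$ (and similarly for $C'$), the intersection $A\cap A'$ contains the enclosure of a shortening of a sector of direction $+\infty$, and in particular contains a chamber $C_0$ common to $A$ and $A'$. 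Within each apartment the chamber complex is gallery connected, so concatenating a gallery from $C$ to $C_0$ inside $A$ with one from $C_0$ to $C'$ inside $A'$ gives a path from $C$ to $C'$ in $\Gamma$.

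A locally finite connected graph is countable, so the set of chambers of $\I$ is countable, and hence $\I_0$, being contained in a countable union of finite sets, is countable as well.

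The main obstacle is the connectedness step. Gallery-connectedness inside an apartment is standard, but one must verify carefully, using (MA4), that $A\cap A'$ really contains a chamber lying in both apartments (as opposed to merely a lower-dimensional face), and that panel-adjacency within an apartment matches panel-adjacency in $\I$, so that the concatenated gallery is a genuine path in $\Gamma$. An alternative route to the same conclusion, if one prefers to avoid gallery arguments, is to use the retraction $\rho=\rho_{-\infty}$: by Lemma~\ref{lemIntersection de I0 et A} and the fact that the local isomorphism defining $\rho$ is $G$-induced, $\rho(\I_0)\subset Y$, which is countable; one then proves that each fiber $\rho^{-1}(\lambda)\cap\I_0$ is countable by essentially the same chamber-graph argument applied to the combinatorial structure of apartments containing $-\infty$.
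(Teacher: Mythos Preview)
Your argument has a genuine gap: it relies on building-like properties of chambers that are not available in a masure with infinite $W^v$. You deduce local finiteness of the chamber graph $\Gamma$ from finite thickness, but finite thickness bounds only the number of chambers per panel; local finiteness of $\Gamma$ also requires that each chamber have only finitely many panels in its closure, which you do not justify. In fact chambers in a masure are filters (germs), not bounded polytopes, and by Proposition~\ref{propNon existence d'une distance W-invariante}~(\ref{itemMurs denses}) the true walls of $\A$ accumulate when $W^v$ is infinite, so the assertion that ``each chamber closure is a bounded alcove \dots containing only finitely many vertices of type $0$'' is unsupported. Gallery-connectedness of the chamber complex of a single apartment is likewise not ``standard'' here, since $W^a=Q^\vee\rtimes W^v$ is not a Coxeter group when $W^v$ is infinite.

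The paper's proof bypasses chambers entirely. It observes that both $\rho_{+\infty}$ and $\rho_{-\infty}$ send $\I_0$ into $Y$ (Lemma~\ref{lemIntersection de I0 et A}), writes $\I_0=\bigcup_{(\lambda,\mu)\in Y^2}\rho_{-\infty}^{-1}(\{\lambda\})\cap\rho_{+\infty}^{-1}(\{\mu\})$, and invokes Theorem~5.6 of \cite{hebertGK} to conclude that each term of this countable union is finite. Your closing ``alternative route'' with a single retraction heads in this direction, but the fibres of one retraction on $\I_0$ are typically infinite, and you fall back on the same unestablished chamber-graph argument to handle them; the missing ingredient is precisely the use of the \emph{pair} of retractions together with the cited finiteness theorem.
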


\begin{proof}
For all $\lambda\in \I_0$, $\rho_{-\infty}(\lambda)$, $\rho_{+\infty}(\lambda)\in \I_0$ and thus $\rho_{-\infty}(\lambda)$, $\rho_{+\infty}(\lambda)\in Y$. Therefore 
$\I_0=\bigcup_{(\lambda,\mu)\in Y^2}\rho_{-\infty}^{-1}(\{\lambda\})\cap \rho_{+\infty}^{-1}(\{\mu\})$. By Theorem~5.6 of \cite{hebertGK}, $\rho_{-\infty}^{-1}(\{\lambda\})\cap \rho_{+\infty}^{-1}(\{\mu\})$ is finite for all $(\lambda,\mu)\in Y^2$, which completes the proof. $\square$
\end{proof}

Let $\s$ be a sector-germ of $\I$. For  $\lambda\in \I_0$ choose an apartment $A(\lambda)$ containing $\lambda+\s$. Let $x\in \I$ and $A$ be an apartment containing $x$ and  $\s$. Then there exists $\lambda\in \I_0\cap A$ such that $x\in \lambda+\s$ and thus $x\in  A(\lambda)$. Therefore $\I=\bigcup_{\lambda\in \I_0}A(\lambda)$.

\begin{prop}\label{propMasure non complete dans le cas denombrable}
Let $d$ be a distance on $\I$. Suppose that there exists a sector-germ $\s$ such that  every apartment containing $\s$ is closed and with empty interior. Then $(\I,d)$ is incomplete and the interior of   every compact subset of $\I$ is empty.
\end{prop}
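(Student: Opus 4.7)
The plan is to deduce both statements from Baire's category theorem applied to the countable decomposition $\I=\bigcup_{\lambda\in\I_0}A(\lambda)$ recorded just before the statement. By Lemma~\ref{I0 denombrable} the index set $\I_0$ is countable, and each $A(\lambda)$ contains $\lambda+\s$, hence the germ $\s$, so by hypothesis every $A(\lambda)$ is closed in $\I$ with empty interior. This cover therefore expresses $\I$ as a countable union of closed nowhere-dense subsets. For the first assertion I would argue by contradiction: if $(\I,d)$ were complete, Baire's theorem would forbid such a decomposition of the nonempty open set $\I$, whence $(\I,d)$ is not complete.

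For the second assertion, assume for contradiction that some compact $K\subset\I$ has nonempty interior $V=\In(K)$ in $\I$. The closure $\overline{V}\subset K$ is then compact, hence a complete metric space in the induced distance, and admits the analogous countable cover $\overline{V}=\bigcup_{\lambda\in\I_0}(\overline{V}\cap A(\lambda))$ by closed subsets. Baire's theorem then yields some $\lambda_0\in\I_0$ and some open subset $U$ of $\I$ with $\emptyset\neq U\cap\overline{V}\subset A(\lambda_0)$. Because $V$ is dense in $\overline{V}$, the intersection $U\cap V$ is nonempty; as $U$ and $V$ are both open in $\I$, this produces a nonempty open subset of $\I$ sitting inside $A(\lambda_0)$, contradicting $\In(A(\lambda_0))=\emptyset$.

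The one delicate point is the final density step: applying Baire directly to $K$ would only yield an open subset of $K$ (in its relative topology), which is insufficient, since the contradiction requires a genuine open subset of $\I$ contained in one of the $A(\lambda)$'s. Replacing $K$ by $\overline{V}$ and exploiting that $V$ lies densely in $\overline{V}$ is precisely the move that upgrades the relative interior produced by Baire into an honest open subset of $\I$, and the rest is a routine Baire argument.
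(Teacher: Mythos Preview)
Your proof is correct and follows the same Baire-category strategy as the paper, using the countable cover $\I=\bigcup_{\lambda\in\I_0}A(\lambda)$ together with Lemma~\ref{I0 denombrable}. The paper's treatment of the compact case is a single line (``$K=\bigcup_{\lambda\in\I_0}K\cap A(\lambda)$ and thus $K$ has empty interior''), so your write-up actually supplies the details that the paper leaves to the reader.

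One small remark: the detour through $\overline{V}$ is not strictly necessary. You can apply Baire directly to the compact (hence Baire) space $K$: some $K\cap A(\lambda_0)$ then contains a nonempty relatively open subset $W\cap K$ with $W$ open in $\I$, and intersecting with $V=\In(K)$ gives $W\cap V\subset A(\lambda_0)$, which is nonempty because $V$ is open in $K$ and dense interiors of closed covers meet every nonempty open set in a Baire space. So the ``delicate point'' you flag is handled just as well with $K$ as with $\overline{V}$; your version is a harmless variant.
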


\begin{proof}
One has $\I=\bigcup_{\lambda\in \I_0}A(\lambda)$, with $\I_0$ countable by Lemma~\ref{I0 denombrable}. Thus by Baire's Theorem, $(\I,d)$ is incomplete. 

Let $K$ be a compact subset of $\I$. Then  $K=\bigcup_{\lambda\in \I_0} K\cap A(\lambda)$ and thus $K$ has empty interior. $\square$

\end{proof}

\section{Distances of positive type and of negative type}\label{secConstruction of signed distances}

\subsection{Translation in a direction}

Let $\s$ be a sector-germ. We now define a map $+_\s$ such that for all $x\in \I$ and $u\in \overline{C^v_f}$, $x+_\s u$ is the ``translate of $x$ by $u$ in the direction $\s$''.  Let $\mathrm{sgn}(\s)\in \{-,+\}$ be the  sign of $\s$.

\begin{defn/prop}Let $\s$ be a sector-germ. Let $x\in \I$. Let $A_1$ be an apartment containing $x+\s$. Let $(x+\overline{\s})_{A_1}$ be the closure of $x+\s$ in $A_1$. Then $(x+\overline{\s})_{A_1}$ does not depend on the choice of $A_1$ and we denote it by $x+\overline{\s}$. 
\end{defn/prop}

\begin{proof}
Let $A_2$ be an apartment containing $x+\s$ and $\phi:A_1\overset{A_1\cap A_2}{\rightarrow} A_2$.  By (MA4), $\phi$ fixes the enclosure of $x+\s$, which contains  $(x+\overline{\s})_{A_1}$. Therefore  $(x+\overline{\s})_{A_1}\supset (x+\overline{\s})_{A_2}$ and by symmetry, $(x+\overline{\s})_{A_1}=(x+\overline{\s})_{A_2}$. Proposition follows. $\square$
\end{proof}

 If $A$ and $B$ are apartments and $\psi:A\rightarrow B$ is an isomorphism, then $\psi$ induces a bijection still denoted $\psi$ between the sector-germs of $A$ and those of $B$. 

\begin{defn/prop}\label{defn/prop Addition}

Let $\s$ be a sector-germ. Let $x\in \I$ and $A_1$ be an apartment containing $x+\s$. Let $\mathrm{sgn}(\s)\in \{-,+\}$ be the sign of $\s$. Let $u\in \overline{C^v_f}$ and  $\psi_1:\A\rightarrow A_1$ be an isomorphism such that $\psi_1(\mathrm{sgn}(\s)\infty)=\s$. Then $\psi_1\big(\psi_1^{-1}(x+\mathrm{sgn}(\s)u)\big)$ does not depend on the choice of $A_1$ and of $\psi_1$ and we denote it $x+_\s u$. Moreover $x+_\s C^v_f=x+\s$ and $x+_\s \overline{C^v_f}=x+\overline{\s}$.

\end{defn/prop}

\begin{proof}
As the case where $\s$ is negative is similar, we assume that $\s$ is positive. 

We first prove the independence of the choice of isomorphism. Let $\psi_1':\A\rightarrow A_1$ be an isomorphism such that $\psi'_1( +\infty)=\s$. Then $\psi'^{-1}_1\circ\psi_1 \in W^v\ltimes Y$ 
fixes the direction $+\infty$ and thus $\psi'^{-1}_1\circ\psi_1$ is a translation of $\A$. Therefore \[\psi'^{-1}_1\circ \psi_1\big(\psi_1^{-1}(x)+u\big)=\psi'^{-1}_1\circ \psi_1\big(\psi_1^{-1}(x)\big)+u=\psi'^{-1}_1(x)+u,\] and thus \[\psi_1\big(\psi_1^{-1}(x+u)\big)=\psi'_1\big(\psi'^{-1}_1(x+u)\big).\]

Let now $A_2$ be an apartment containing $x+\s$ and $\psi_2:\A\rightarrow A_2$ be an isomorphism such that $\psi_2(+\infty)=\s$. From what has already been proved, we can assume that $\psi_2\circ \psi_1^{-1}=\phi$, where $\phi:A_1\overset{A_1\cap A_2}{\rightarrow} A_2$. 

As $x\in A_1\cap A_2$, $\phi(x)=x$ and thus \[\psi_1^{-1}(x)=\psi_2^{-1}(x).\]

Let $i\in \{1,2\}$. Then $\psi_i\big(\psi_i^{-1}(x)+C^v_f\big)$ is a sector with the base point $x$ and with the direction $\s$: $\psi_i\big(\psi^{-1}_i(x)+C^v_f\big)=x+\s$ (see Proposition~\ref{defnProp faces}). Moreover $\psi_i\big(\psi_i^{-1}(x)+\overline{C^v_f}\big)$ is the closure of $\psi_i\big(\psi_i^{-1}(x)+C^v_f\big)=x+\s$ in $A_i$ and thus $\psi_i\big(\psi_i^{-1}(x)+\overline{C^v_f}\big)=x+\overline{\s}$.

Consequently $\psi_1\big(\psi_1^{-1}(x+u)\big)\in x+\overline{\s}\subset A_1\cap A_2$. Thus \[\phi\bigg(\psi_1\big(\psi_1^{-1}(x+u)\big)\bigg)=\psi_1\big(\psi_1^{-1}(x+u)\big)=\psi_2\big(\psi_1^{-1}(x)+u\big)=\psi_2\big(\psi_2^{-1}(x)+u\big),\] which is our assertion. $\square$
\end{proof}

Through the end of this section, we fix a sector-germ $\s$. As the case when $\s$ is negative is similar to the case when it is positive, we assume that $\s$ is positive.

\begin{lemma}\label{lemCommutativite de ladition}
Let $x\in \I$ and $u,u'\in \overline{C^v_f}$. Then $(x+_\s u)+_\s u'=x+_\s(u+u')$.
\end{lemma}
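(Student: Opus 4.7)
The plan is to reduce the identity to a single computation inside one fixed apartment equipped with one fixed isomorphism from $\A$. As in the proof of Definition/Proposition~\ref{defn/prop Addition}, since the negative case is analogous, I would assume $\s$ is positive. Let $A_1$ be an apartment containing $x+\s$ and let $\psi_1:\A\rightarrow A_1$ be an isomorphism with $\psi_1(+\infty)=\s$. Set $y:=x+_\s u=\psi_1\bigl(\psi_1^{-1}(x)+u\bigr)$.

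The key step, and what I expect to be the main (though mild) obstacle, is to verify that the same pair $(A_1,\psi_1)$ can also be used to compute $y+_\s u'$. First, because $u\in\overline{C^v_f}$, we have $\psi_1^{-1}(x)+u\in\psi_1^{-1}(x)+\overline{C^v_f}$, so $y\in x+\overline{\s}\subset A_1$. Next, the apartment $A_1$ contains the direction $\s\in\I^\infty$ (since it contains a sector representing $\s$), so by Proposition~\ref{defnProp faces}~\ref{itAppart à l'infini} applied to $y\in A_1$ we get $y+\s\subset A_1$. Hence $A_1$ is an apartment containing $y+\s$, and $\psi_1(+\infty)=\s$, so $(A_1,\psi_1)$ is a legitimate choice for computing $y+_\s u'$ in the sense of Definition/Proposition~\ref{defn/prop Addition}.

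Once this compatibility is established, the identity reduces to an obvious computation in the affine space $\A$. Namely, using the fact that $\psi_1$ is affine and that $\overline{C^v_f}$ is stable under addition (it is a convex cone, being defined by the linear inequalities $\alpha_i\geq 0$, so in particular $u+u'\in\overline{C^v_f}$), I would write
\[
y+_\s u' = \psi_1\bigl(\psi_1^{-1}(y)+u'\bigr) = \psi_1\bigl(\psi_1^{-1}(x)+u+u'\bigr) = x+_\s(u+u'),
\]
which is the desired equality. No further input is needed beyond the well-definedness already proved in Definition/Proposition~\ref{defn/prop Addition} and the stability statement in Proposition~\ref{defnProp faces}~\ref{itAppart à l'infini}.
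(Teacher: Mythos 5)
Your proof is correct and follows essentially the same route as the paper's: fix one apartment $A_1\supset x+\s$ and one isomorphism $\psi_1$ with $\psi_1(+\infty)=\s$, observe that all three points in play lie in $A_1$, and reduce to the affine identity $\psi_1^{-1}(x)+u+u'=(\psi_1^{-1}(x)+u)+u'$. The only difference is that you spell out explicitly (via $y\in x+\overline{\s}\subset A_1$ and Proposition~\ref{defnProp faces}~\ref{itAppart à l'infini}) why $A_1$ is also a legitimate choice for computing $y+_\s u'$, a step the paper states without comment.
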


\begin{proof}
Let $A$ be an apartment containing $x+\s$ and $\psi:\A\rightarrow A$ be such that $\psi(+\infty)=\s$.  One has $(x+_\s u)+_\s u'$, $x+_\s(u+u')\in A$. By definition, $\psi^{-1}(x+_\s u)=\psi^{-1}(x)+u$, thus \[(x+_\s u)+_\s u'=\psi(\psi^{-1}(x+_\s u)+u')=\psi(\psi^{-1}(x)+u+u')=x+_\s(u+u'),\] which proves the lemma. $\square$
\end{proof}

For $x,x'\in \I$, we set $U_\s(x,x')=\{(u,u')\in \overline{C^v_f}^2|\ x+_\s u=x'+_\s u'\}$.

\begin{lemma}\label{lemT(x,y) non vide}
Let $x,x'\in \I$. Then $U_\s(x,x')$ is nonempty.
\end{lemma}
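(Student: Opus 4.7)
My plan is to produce a single point $y \in \I$ admitting both presentations $y = x+_\s u$ and $y = x' +_\s u'$, by routing the two points through a common apartment that meets both of their $\s$-sectors deeply.

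By (MA3), choose apartments $A_1 \ni x + \s$ and $A_2 \ni x' + \s$. Both apartments contain the sector-germ $\s$, so by applying (MA4) to $\mathfrak{R} = F = \s$ (or equivalently, by the standard discussion recalled in Subsection~\ref{subsecRetractions}), the intersection $A_1 \cap A_2$ contains an element of the filter $\mathrm{cl}(\s)$, that is, it contains some sector $z_0 + \s$ of germ $\s$. Proposition~\ref{defnProp faces}(\ref{itAppart à l'infini}) applied in both apartments then upgrades this to $z_0 + \overline{\s} \subset A_1 \cap A_2$. Inside $A_2$, the two sectors $z_0 + \s$ and $x' + \s$ have the same germ and therefore share a common sub-sector; pick its base point $z$, so that $z \in z_0 + \overline{\s}$ (whence $z \in A_1 \cap A_2$) and $z \in x' + \overline{\s}$. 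By Definition/Proposition~\ref{defn/prop Addition} we may write $z = x' +_\s v$ for some $v \in \overline{C^v_f}$, and one more application of Proposition~\ref{defnProp faces}(\ref{itAppart à l'infini}) yields $z + \overline{\s} \subset A_1 \cap A_2$.

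Now I repeat the sub-sector argument inside $A_1$: the sectors $x + \s$ and $z + \s$ share the germ $\s$, so they have a common sub-sector whose base point $y$ satisfies $y \in x + \overline{\s}$ and $y \in z + \overline{\s}$. Definition/Proposition~\ref{defn/prop Addition} then supplies $u, w \in \overline{C^v_f}$ with $y = x +_\s u = z +_\s w$, computed in $A_1$. Since $y \in z + \overline{\s} \subset A_1 \cap A_2$, the apartment-independence clause of Definition/Proposition~\ref{defn/prop Addition} implies that the identity $y = z +_\s w$ also holds when computed in $A_2$. Applying Lemma~\ref{lemCommutativite de ladition} in $A_2$ gives
\[
y = z +_\s w = (x' +_\s v) +_\s w = x' +_\s (v+w),
\]
so setting $u' = v+w \in \overline{C^v_f}$ we obtain $(u,u') \in U_\s(x,x')$, proving the set is nonempty.

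The only delicate step is producing the intermediate base point $z$ that lies in both $A_1 \cap A_2$ and in $x' + \overline{\s}$, because $x'$ need not belong to $A_1$. This is handled by combining (MA4) (which forces $A_1 \cap A_2$ to contain a sub-sector of direction $\s$) with Proposition~\ref{defnProp faces}(\ref{itAppart à l'infini}) and the elementary fact that two sectors of the same direction in an apartment share a sub-sector; once $z$ is available, the remainder is routine bookkeeping with $+_\s$ and Lemma~\ref{lemCommutativite de ladition}.
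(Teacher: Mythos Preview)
Your proof is correct and follows essentially the same route as the paper's. The paper's version is a bit more streamlined and symmetric: it fixes a single apartment $A$ containing $\s$, picks $a\in (x+\s)\cap A$ and $a'\in (x'+\s)\cap A$ (both nonempty since $A$ contains the germ $\s$), then chooses $b\in (a+\s)\cap(a'+\s)$ inside $A$, and concludes via Lemma~\ref{lemCommutativite de ladition}; this avoids routing through two apartments and an intermediate point $z$, but relies on exactly the same ingredients you use.
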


\begin{proof}
Let $A$ be an apartment containing $\s$. Choose $a\in (x+\s)\cap A$ and $a'\in (x'+\s)\cap A$. Then $a+\s$ and $a'+\s$ are sectors of $A$ of the same direction and thus there exists $b\in (a+\s)\cap (a'+\s).$ By Definition/Proposition~\ref{defn/prop Addition}, there exist $u,u',v,v'\in C^v_f$ such that $a=x+_\s u$, $a'=x'+_\s u'$ and $b=a+_\s v=a'+_\s v'$. By Lemma~\ref{lemCommutativite de ladition}, $(u+v,u'+v')\in U_\s(x,x')$ and the lemma is proved.
\end{proof}

\subsection{Definition of distances of positive type and of negative type}\label{subDefinition_distances}

 Let $\Theta_+$ (resp. $\Theta_-$) be the set of pairs $(|\ .\ |,\s)$ such that $\s$ is a  positive (resp. negative) sector-germ and $|\ .\ |$ is a norm on $\A$.
 
 We now define the distance on $\I$ associated to $(|\ \cdot\ |,\s)$.  When $\I$ is a building the distance on $\I$ is usually defined as follows. One equips $\A$ with a euclidean $W^v$-invariant norm $|\ \cdot\ |$. For $x,y\in \A$, one chooses $g\in G$ such that $g.x,g.y\in \A$  and one sets $d(x,y)=|g.y-g.x|$. The fact $|\ .\ |$ is $W^v$ invariant implies that this distance is well defined. When $\I$ is a masure which is not a building, there exists no $W^v$-invariant norm on $\A$ (by Proposition~\ref{propNon existence d'une distance W-invariante}) and  there can exist pairs of points which are not contained in a common apartment. Thus we have to find another method to define a distance. For each pair of points, there exists a piecewise linear path joining them (where a piecewise linear path is a map $\gamma:[0,1]\rightarrow \I$ such that there exists $n\in \N$ and $t_0=0<t_1<\ldots<t_n=1$ such that for all $i\in \llbracket 0,n-1\rrbracket$, $\gamma|_{[t_i,t_{i+1}]}$ takes its values in some apartment $A_i$ and is an affine parametrization of the segment $[\gamma(t_i),\gamma(t_{i+1})]_{A_i}$). Thus we can try to define the distance between two points as the minimal length of a piecewise linear path joining them.  However to this end, we need to define a notion of a length for a path and the nonexistence of a $W^v$-invariant norm on $\A$ makes it difficult (because we cannot simply define it on  $\A$ and transport it to each
  apartment by using isomorphism of apartments). To avoid this problem, we  fix a sector $\s$, which enables us to define a length on every apartment containing $\s$. As the paths described above are difficult to handle --- for example we do not really understand the segments that are not increasing or decreasing for the Tits preorder --- we impose very strict conditions on the paths that we measure. In particular, we require $n=2$ (i.e at most one break-point in the paths) and  we require the directions of the segments to be contained in $\s$, which leads to the definition below.
 
\begin{defn/prop}\label{defnpropDistance}
Let $\theta=(|\ .\ |,\s)\in \Theta_+\cup \Theta_-$. Let $d_\theta:\I^2\rightarrow \R_+$ be defined by $d_ \theta(x,x')=\inf\{|u|+|u'|\ |\ (u,u')\in U_{\s}(x,x')\}$ for  $x,x'\in \I$. Then $d_\theta$ is a distance on $\I$.

\end{defn/prop}

\begin{proof}

By Lemma~\ref{lemT(x,y) non vide}, $d_\theta$ is well defined. Moreover it is clearly symmetric. 

Let us show the triangle inequality. Let $x,x',x''\in \I$. Let $\epsilon>0$ and let $(u,u')\in U_\s(x,x')$, $(v',v'')\in U_\s(x',x'')$ be such that $|u|+|u'|\leq d_\theta(x,x')+\epsilon$ and $|v'|+|v''|\leq d_\theta(x',x'')+\epsilon$. One has $x+_\s u=x'+_\s u'$ and $x'+_\s v'=x''+_\s v''$. Thus $x+_\s u+_\s v'=x'+_\s v'+_\s u' =x''+_\s v''+_\s u'$ (by Lemma~\ref{lemCommutativite de ladition}) and hence $(u+v',v''+u')\in U_\s(x,x'')$. Consequently, $d_\theta(x,x'')\leq |u+v'|+|v''+u'|\leq |u|+|v'|+|v''|+|u''|\leq d_\theta(x,x')+d_\theta(x',x'')+2\epsilon$, which proves the triangle inequality.

Let $x,x'\in \I$ be such that $d_\theta(x,x')=0$.  Then there exists $\big((u_n,u_n')\big)_{n\in \N}\in U_\s(x,x')^\N$ such that $u_n\rightarrow 0$ and $u'_n\rightarrow 0$. Let $n\in \N$. One has $x+\s\supset x+_\s u_n+ \s=x'+_\s u'_n+\s$ and thus $x+\s\supset \bigcup_{n\in \N}x'+u'_n+\s=x'+\s$. By symmetry, $x'+\s\supset x+\s$ and hence $x+\s=x'+\s$. Let $B$  be an apartment containing  $x$ and $\s$. By (MA2), $B\Supset \mathrm{cl}(x+\s)=\mathrm{cl}(x'+\s)$ and thus $x'\in B$. Therefore $x=x'$. $\square$
\end{proof}

 Thus we have constructed a distance $d_\theta$ for all $\theta\in \Theta_+\cup \Theta_-$. A distance of the form $d_{\theta_+}$ (resp. $d_{\theta_-}$) for some $\theta_+\in\Theta_+$ (resp. $\theta_-\in \Theta_-$) is called a \textit{distance of positive type} (resp. \textit{distance of negative type}). When $\I$ is a tree, we obtain a distance proportional to the distance of a Euclidean building on $\I$: there exists $k\in \R$ such that if $x,y\in \A=\R$, then  $d_\theta(x,y)=k|y-x|$ and the action of $G$ on $\I$ is isometric. 
 
 The choice of the norm has an influence on the metric on $\I$ but not on the topology defined on $\I$ (see Theorem~\ref{thmEquivalence des distances}). Independently of the choice of the norm  every pair of points is joined by a geodesic and there exists pairs of points joined by infinitely many geodesics (when $\dim \A\geq 2$, see Proposition~\ref{propI est geodesic}).

\subsubsection{Examples} We suppose that $\A$ is two-dimensional and  that $\bigcap_{i\in I} \ker \alpha_i=\{0\}$ (thus the root generating system defining $\A$ is associated with a size $2$ Kac-Moody matrix). We determine the restriction of $d_\theta$ to $\A$ when $\theta=(|\ \cdot\ |,+\infty)$ for different choices of a norm on $\A$.

Write $I=\{1,2\}$. Let $u_1,u_2\in \A$  be such that $\alpha_1(u_1)=1$, $\alpha_2(u_1)=0$, $\alpha_1(u_2)=0$ and $\alpha_2(u_2)=1$. Then $\overline{C^v_f}=\{x_1 u_1+x_2u_2|(x_1,x_2)\in (\R_+)^2\}$.

We begin by determining $d_\theta|_{\A^2}$, when $\theta$ is associated to any Euclidean norm. For $x=(x_1,x_2)\in\A$, one sets $|x|_2=\sqrt{x_1^2+x_2^2}$. Let $\theta_2=(|\ .\ |_2,+\infty)$. We now determine the restriction of $d_{\theta_2}$ to $\A$.

\begin{prop}\label{propExample_dtheta_restricted_apartment}
Let $x,y\in \A$. Then: \[d_{\theta_2}(x,y)=\left\{\begin{aligned} &|y-x|_2 &\mathrm{\ if\ } y-x\in \pm \overline{C^v_f}\\& |\alpha_1(y-x)u_1|_2+|\alpha_2(y-x)u_2|_2 &\mathrm{\ if\ }y-x\notin \pm \overline{C^v_f}.\end{aligned}\right.\] Moreover, if   $\alpha_1(x)\leq \alpha_1(y)$ and $\alpha_2(y)\leq \alpha_2(x)$, then  $(\alpha_1(y-x)u_1,\alpha_2(x-y)u_2)\in U_{+\infty}(x,y)$ and $d_{\theta_2}(x,y)=|\alpha_1(y-x)u_1|_2+|\alpha_2(x-y)u_2|_2$. 
\end{prop}

For $x\in \A$ and $u\in \A$, we denote by $x+\R_+u$ the set $\{x+tu|t\in \R_+\}$. 

\begin{lemma}\label{lemLocal_minimum_distance}
Let $x,y\in \A$. Let $f:\A\rightarrow \R_+$ be defined by $f(z)=|z-x|_2+|y-x|_2$, for $z\in \A$. Then the set of points at which $f$ admits a local minimum is the segment $[x,y]$ and $\min f=|y-x|_2=f(z)$, for all $z\in [x,y]$.
\end{lemma}

\begin{proof}
Using a translation and a rotation, we may assume that $x=0$ and $y=(y_1,0)$ for some $y_1\in \R$. The lemma follows by straightforward computations of $\frac{\partial f}{\partial z_2} (z_1,z_2)$ and $z_1\mapsto f(z_1,0)$. $\square$
\end{proof}

We now prove Proposition~\ref{propExample_dtheta_restricted_apartment}. Let $x,y\in \A$ and $f:\A\rightarrow \R_+$ be defined by $f(z)=|z-x|_2+|y-x|_2$, for $z\in \A$. Suppose $y-x\in \overline{C^v_f}$. Then $y=x+y-x$ and thus \[d_{\theta_2}(x,y)\leq |0|_2+|y-x|_2=|y-x|_2=\min f\leq d_{\theta_2}(x,y),\] which proves that $d_{\theta_2}(x,y)=|y-x|_2$. 

Suppose $y-x\notin \pm \overline{C^v_f}$. Then $[x,y]\cap (x+\overline{C^v_f})\cap (y+\overline{C^v_f})$ is empty and thus $\min \{f(z)|z\in (x+\overline{C^v_f})\cap (y+\overline{C^v_f})\}>|y-x|_2$. By Lemma~\ref{lemLocal_minimum_distance}, the minimum of $f$ on $(x+\overline{C^v_f})\cap (y+\overline{C^v_f})$ is attained on  the boundary $\partial\big((x+\overline{C^v_f})\cap (y+\overline{C^v_f})\big)$ of $(x+\overline{C^v_f})\cap (y+\overline{C^v_f})$. Suppose for example that $\alpha_1(x)\leq \alpha_1(y)$ and $ \alpha_2(y)\leq \alpha_2(x)$. Let $z\in \A$ be such that $\alpha_1(z)=\alpha_1(y)$ and $\alpha_2(z)=\alpha_2(x)$. Then $\partial\big((x+\overline{C^v_f})\cap (y+\overline{C^v_f})\big)=(z+\R_+ u_1)\cup (z+\R_+ u_2)$. Let $z'\in z+\R_+u_1$. Write $z'=z+tu_1$. Then $f(z')=|z'-x|_2+|z'-y|_2=|z-x|_2+t|u_1|_2+|z+tu_1-y|_2\geq f(z)$. By symmetry we deduce that $\min \{f(z')|\ z'\in (x+\overline{C^v_f})\cap (y+\overline{C^v_f})\}=f(z)=\alpha_1(y-x)|u_1|_2+\alpha_2(x-y)|u_2|_2$, and the proposition follows. $\square$

We now determine $d_\theta|_{\A^2}$, when $\theta$ is associated to a certain norm $1$ on $\A$.

\begin{prop}\label{propExample_dtheta_restricted_2}
Define $|\ \cdot\ |_1:\A\rightarrow \R_+$ by $|x|_1=|\alpha_1(x)|+|\alpha_2(x)|$ for $x\in \A$. Let $\theta_1=(|\ \cdot\ |_1,+\infty)$. Then $d_{\theta_1}(x,y)=|y-x|_1$ for all $x,y\in \A$.
\end{prop}

\begin{proof}
Let $x,y\in \A$.  Let $(v,v')\in U_{+\infty}(x,y)$. Then for $i\in \{1,2\}$, $\alpha_i(v)\geq \max \big(0,\alpha_i(y-x)\big)$ and $\alpha_i(v')\geq \max \big(0,\alpha_i(x-y)\big)$. Thus $|v|_1+|v'|_1\geq |y-x|_1$ and hence $d_{\theta_1}(x,y)\geq |y-x|_1$. 

Suppose $y-x\in \overline{C^v_f}$. Then $(y-x,0)\in U_{+\infty}(x,y)$, thus $d_{\theta_1}(x,y)\leq |y-x|_1$ and hence $d_{\theta_1}(x,y)=|y-x|_1$.

Suppose $y-x\notin \pm \overline{C^v_f}$. Suppose for example $\alpha_1(x)\geq \alpha_1(y)$ and $\alpha_2(x)\leq \alpha_2(y)$. Then $\big(\alpha_2(y-x)u_2,\alpha_1(x-y)u_1\big)\in U_{+\infty}(x,y)$ and thus $d_{\theta_1}(x,y)\leq |y-x|_1$. Therefore $d_{\theta_1}(x,y)=|y-x|_1$ and the proposition follows. $\square$
\end{proof}

\subsection{Study on the apartments containing $\s$}\label{subStudy_on_apartments_containing_s}

We now study the $d_\theta$, for $\theta\in \Theta_+\cup \Theta_-$. In order to simplify the notation and by symmetry, we will mainly take $\theta\in \Theta_+$.

Fix $\theta\in \Theta_+$. Write $\theta=(|\ .\ |,\s)$, where $|\ .\ |$ is a norm and $\s$ is a sector-germ. 
 
\begin{lemma}\label{lemDistances restricted to apartments containing s}
Let $A$ and $B$ be two apartments containing $\s$. Set $\rho:\I\overset{\s}{\rightarrow} A$ and $\phi:A\overset{A\cap B}{\rightarrow} B$. Then: \begin{enumerate}

\item\label{itDtheta norme} the distance $d_{\theta}|_{A^2}$ is induced by some norm on $A$,

\item\label{itCompatibility retraction addition} for all $x\in \I$ and $u\in \overline{C^v_f}$, $\rho(x+_\s u)=\rho(x)+_\s u$,

\item\label{itRetraction fixing s 1Lipschitz continuous} the retraction $\rho:(\I,d_\theta)\rightarrow (A,d_{\theta}|_{ A^2})$ is $1$-Lipschitz,

\item\label{itIso_fixing s continuous} the map $\phi:(A,d_{\theta}|_{A^2})\rightarrow (B,d_{\theta}|_{B^2})$ is an isometry.

\end{enumerate}
\end{lemma}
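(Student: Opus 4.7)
The plan is to prove item~(\ref{itCompatibility retraction addition}) first, since items~(\ref{itRetraction fixing s 1Lipschitz continuous}) and~(\ref{itIso_fixing s continuous}) follow from it almost formally, and to handle item~(\ref{itDtheta norme}) at the end by a direct computation in a chart. For item~(\ref{itCompatibility retraction addition}), I would fix an apartment $A_x$ containing $x$ and $\s$. By Proposition~\ref{defnProp faces}~(\ref{itAppart à l'infini}) applied to the direction $\s$, one has $x+\s\subset A_x$, so $x+\overline{\s}\subset A_x$ and in particular $x+_\s u\in A_x$ for every $u\in\overline{C^v_f}$. Hence $\rho$ can be computed on both $x$ and $x+_\s u$ through the same isomorphism $\phi_x:A_x\overset{\s}{\to}A$, and it remains to check that $\phi_x$ intertwines $+_\s$. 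Picking a chart $\psi_{A_x}:\A\to A_x$ with $\psi_{A_x}(+\infty)=\s$ and setting $\psi_A:=\phi_x\circ\psi_{A_x}$, one has $\psi_A(+\infty)=\s$ because $\phi_x$ fixes $\s$, so Definition/Proposition~\ref{defn/prop Addition} applied with both charts yields
\[
\rho(x+_\s u)=\phi_x(x+_\s u)=\psi_A\bigl(\psi_A^{-1}(\phi_x(x))+u\bigr)=\rho(x)+_\s u.
\]

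Items~(\ref{itRetraction fixing s 1Lipschitz continuous}) and~(\ref{itIso_fixing s continuous}) then follow formally. For~(\ref{itRetraction fixing s 1Lipschitz continuous}), given $(u,u')\in U_\s(x,y)$, applying $\rho$ to $x+_\s u=y+_\s u'$ and using~(\ref{itCompatibility retraction addition}) gives $\rho(x)+_\s u=\rho(y)+_\s u'$, so $(u,u')\in U_\s(\rho(x),\rho(y))$ and $d_\theta(\rho(x),\rho(y))\leq d_\theta(x,y)$ after taking the infimum. For~(\ref{itIso_fixing s continuous}), the isomorphism $\phi$ fixes $\s$ (by the discussion in Subsection~\ref{subsecRetractions}), so the same chart argument shows that $\phi$ intertwines $+_\s$; because $\phi$ is a bijection, one gets $U_\s(x,y)=U_\s(\phi(x),\phi(y))$ (the reverse inclusion by applying $\phi^{-1}$), whence $d_\theta(\phi(x),\phi(y))=d_\theta(x,y)$.

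For item~(\ref{itDtheta norme}), I would fix an isomorphism $\psi:\A\to A$ with $\psi(+\infty)=\s$. For $x,y\in A$, Definition/Proposition~\ref{defn/prop Addition} gives $x+_\s u=\psi(\psi^{-1}(x)+u)$, so the condition $(u,u')\in U_\s(x,y)$ is equivalent to $u-u'=\psi^{-1}(y)-\psi^{-1}(x)$. Defining
\[
N(v):=\inf\bigl\{\,|u|+|u'|\,:\,(u,u')\in\overline{C^v_f}\times\overline{C^v_f},\ u-u'=v\,\bigr\},
\]
we have $d_{\theta|A^2}(x,y)=N(\psi^{-1}(y-x))$, and it suffices to check that $N$ is a norm on $\A$: positive homogeneity from scaling $(u,u')$ by $\lambda>0$, evenness from swapping $u$ and $u'$, the triangle inequality from summing two decompositions (using that $\overline{C^v_f}$ is a convex cone closed under addition), and definiteness because $N(v)=0$ yields sequences $u_n,u'_n\in\overline{C^v_f}$ with $u_n-u'_n=v$ and $|u_n|,|u'_n|\to 0$. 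The only step requiring genuine input from the theory of masures is item~(\ref{itCompatibility retraction addition}); everything else reduces to manipulations with $+_\s$ and standard properties of convex cones.
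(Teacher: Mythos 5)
Your proof is correct and takes essentially the same approach as the paper: the key idea in both is to check that the relevant maps (the retraction $\rho$, the isomorphism $\phi$) intertwine $+_\s$ by working in charts $\psi$ with $\psi(+\infty)=\s$, which yields the $U_\s$-containments (or equalities) that give Lipschitz continuity, and to identify $d_{\theta|A^2}$ via the chart as a function of $\psi^{-1}(y)-\psi^{-1}(x)$ which one checks is a norm. The only differences are cosmetic: you prove item~(\ref{itCompatibility retraction addition}) first and item~(\ref{itDtheta norme}) last (the paper does the reverse, but the items are independent so this is harmless), and for item~(\ref{itIso_fixing s continuous}) you derive $U_\s(x,y)=U_\s(\phi(x),\phi(y))$ directly from the intertwining property of $\phi$, whereas the paper applies item~(\ref{itRetraction fixing s 1Lipschitz continuous}) twice (to $\phi^{-1}=\rho_{A}|_B$ and to $\phi=\rho_B|_A$) — both arguments are valid and amount to the same computation.
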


\begin{proof}
Let us prove~\ref{itDtheta norme}. Let $\psi:\A\rightarrow A$ be such that $\psi(+\infty)=\s$. 
Let $|\ .\ |':\A\rightarrow \R^+$ be defined by $|a|'=d_\theta\big(\psi(a),\psi(0)\big)$ for  $a\in \A$. 

For $a_1,a_2\in \A$, set $V(a_1,a_2)=\{(u_1,u_2)\in \overline{C^v_f}^2| a_1-a_2=u_2-u_1\}$. Let $(a_1,a_2)\in A$.
 Let $i\in \{1,2\}$ and $u_i\in \overline{C^v_f}$. Then $a_i+_\s u_i=\psi(\psi^{-1}(a_i)+u_i)$ and thus
 \[U_\s(a_1,a_2)=V\big(\psi^{-1}(a_1),\psi^{-1}(a_2)\big).\]

Let $a_1,a_2$. Then $U_\s(a_1,a_2)=V\big(\psi^{-1}(a_1),\psi^{-1}(a_2)\big)=V\big(\psi^{-1}(a_1)-\psi^{-1}(a_2),0\big)$. Consequently $d_\theta(a_1,a_2)=|\psi^{-1}(a_1)-\psi^{-1}(a_2)|'$. 
It remains to prove that $|\ .\ |'$ is a norm on $\A$. Let $a_1,a_2\in \A$. Then 
\[|a_1+a_2|'=d_\theta\big( \psi(a_1+a_2),\psi(0)\big)\leq d_\theta \big(\psi(a_1+a_2),\psi(a_1)\big)+d_\theta\big(\psi(a_1),\psi(0)\big)\] by
 Definition/Proposition~\ref{defnpropDistance}. As $V(a_1+a_2,a_1)=V(a_2,0)$, we deduce that
  $d_\theta\big(\psi(a_1+a_2),\psi(a_1)\big)=d_\theta \big(\psi(a_2),\psi(0)\big)$ and hence $|a_1+a_2|'\leq |a_1|'+|a_2|'$. 
  
  Let $t\in \R$ and $a\in \A$. As $V(0,ta)=tV(0,a)$, we deduce that $|ta|'=|t| |a|'$, which proves~\ref{itDtheta norme}.

Let us prove~\ref{itCompatibility retraction addition}. Let $x\in \I$ and $A_x$ be an apartment containing $x+\s$.  Let $\phi:A_x\overset{A_x\cap A}{\rightarrow} A$. Let $\psi_x:\A\rightarrow A_x$ be such that $\psi_{x}(+\infty)=\s$ and $\psi_A=\phi\circ \psi_x$. Then $\psi_{A}(+\infty)=\s$. Let $u\in \overline{C^v_f}$. Then  by Definition/Proposition~\ref{defn/prop Addition}, $A_x\ni x+_\s u$ and $A\ni \rho(x)+_\s u$. Therefore \[\rho(x+_\s u)=\phi(x+_\s u)=\phi\circ \psi_x(\psi^{-1}_x(x)+u)=\psi_A(\psi_x^{-1}(x)+u)\] and \[\rho(x)+_\s u=\psi_A\bigg(\psi_A^{-1}\big(\phi(x)\big)+u\bigg)=\psi_A\big(\psi_x^{-1}(x)+u \big)=\rho(x+_\s u),\] which proves~\ref{itCompatibility retraction addition}.

By~\ref{itCompatibility retraction addition}, for all $x,x'\in \I$, $U_\s\big(\rho(x),\rho(x')\big)\supset U_s(x,x')$, which proves~\ref{itRetraction fixing s 1Lipschitz continuous}. By~\ref{itRetraction fixing s 1Lipschitz continuous}, $\phi^{-1}:(B,d_{\theta}|_{B^2})\rightarrow (A,d_{\theta}|_{A^2})$ is $1$-Lipschitz. By symmetry, $\phi:(A,d_{\theta}|_{A^2})\rightarrow (B,d_{\theta}|_{B^2})$ is $1$-Lipschitz, which proves~\ref{itIso_fixing s continuous}.
$\square$

\end{proof}

\begin{lemma}\label{lemLipschitziannite de l'addition}
 Let $d'$ be a distance on $\A$ induced by some norm on $\A$. Define $d_{\theta,d'}:\I\times \overline{C^v_f}\rightarrow \R_+$ by $d_{\theta,d'}\big((x,u),(x',u')\big)=d_\theta(x,x')+d'(u,u')$ for  $(x,u),(x',u')\in \I\times\overline{C^v_f}$. Then the map $(\I\times \overline{C^v_f},d_{\theta,d'}) \rightarrow (\I,d_\theta)$ continuous defined by $(x,u)\mapsto x+_\s u$ is Lipschitz.
\end{lemma}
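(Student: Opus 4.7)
The plan is to bound $d_\theta(x+_\s u, x'+_\s u')$ by splitting via the triangle inequality and treating the two variables separately:
\[d_\theta(x+_\s u,\, x'+_\s u') \;\leq\; d_\theta(x+_\s u,\, x'+_\s u) \;+\; d_\theta(x'+_\s u,\, x'+_\s u').\]
I will show that the first term is bounded by $d_\theta(x,x')$ and that the second term is bounded by a constant multiple of $d'(u,u')$; choosing the maximum of these two constants yields the Lipschitz bound.

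For the first term, I would fix any $(v,v') \in U_\s(x,x')$, so that $x+_\s v = x'+_\s v'$. Using Lemma~\ref{lemCommutativite de ladition} and the obvious commutativity of addition in $\overline{C^v_f}$, I compute
\[(x+_\s u) +_\s v \;=\; x +_\s (u+v) \;=\; x +_\s (v+u) \;=\; (x+_\s v) +_\s u \;=\; (x'+_\s v') +_\s u \;=\; (x'+_\s u) +_\s v',\]
so $(v,v') \in U_\s(x+_\s u,\, x'+_\s u)$. Taking the infimum over $(v,v')$ gives $d_\theta(x+_\s u, x'+_\s u) \leq d_\theta(x,x')$.

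For the second term, I would use the fact that both $x'+_\s u$ and $x'+_\s u'$ lie in the apartment $A_{x'}$ containing $x'+\s$. By Lemma~\ref{lemDistances restricted to apartments containing s}(\ref{itDtheta norme}), $d_{\theta|A_{x'}^{2}}$ is induced by a norm on $A_{x'}$; moreover, by Lemma~\ref{lemDistances restricted to apartments containing s}(\ref{itIso_fixing s continuous}), any two apartments containing $\s$ are isometric via the canonical isomorphism, so the pull-back to $\A$ via any isomorphism $\psi:\A \to A_{x'}$ with $\psi(+\infty)=\s$ defines a norm $\|\cdot\|_\s$ on $\A$ independent of $x'$. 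Since $\psi^{-1}(x'+_\s u) - \psi^{-1}(x'+_\s u') = u - u'$, this gives the exact identity $d_\theta(x'+_\s u, x'+_\s u') = \|u-u'\|_\s$. Because all norms on the finite-dimensional space $\A$ are equivalent, $\|u-u'\|_\s \leq C\,d'(u,u')$ for some $C>0$.

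Combining the two estimates, $d_\theta(x+_\s u, x'+_\s u') \leq d_\theta(x,x') + C\,d'(u,u') \leq \max(1,C)\, d_{\theta,d'}\bigl((x,u),(x',u')\bigr)$. The main subtlety, and hence the only real obstacle, is the uniformity claim in the third paragraph: the norm controlling $d_\theta$ on the apartment $A_{x'}$ must not depend on $x'$, and this is precisely what Lemma~\ref{lemDistances restricted to apartments containing s}(\ref{itIso_fixing s continuous}) provides.
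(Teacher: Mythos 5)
Your proof is correct, and it uses the same underlying tools as the paper (Lemma~\ref{lemCommutativite de ladition} and Lemma~\ref{lemDistances restricted to apartments containing s}), but the decomposition is organized differently. The paper reduces to $d'=d_{\theta|\A^2}$, regards $\overline{C^v_f}$ as a subset of $\I$, and in a single step combines a near-optimal pair $(w,w')\in U_\s(x,x')$ with a near-optimal pair $(v,v')\in U_\s(u,u')$ to produce $(w+v,w'+v')\in U_\s(x+_\s u,\,x'+_\s u')$, yielding $d_\theta(x+_\s u,x'+_\s u')\le d_\theta(x,x')+d_\theta(u,u')$ directly. You instead split through the intermediate point $x'+_\s u$: the first term is controlled by the inclusion $U_\s(x,x')\subset U_\s(x+_\s u,\,x'+_\s u)$, which isolates the attractive fact (not stated separately in the paper) that $x\mapsto x+_\s u$ is $1$-Lipschitz in $x$; the second term is computed as a norm of $u-u'$ inside the apartment $A_{x'}$, which is in effect re-deriving the identity $d_\theta(x'+_\s u,x'+_\s u')=d_{\theta|\A^2}(u,u')$ that the paper's $U_\s(u,u')$ step encodes. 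The uniformity you flag — that the norm $\|\cdot\|_\s$ does not depend on $A_{x'}$ — is indeed the one nontrivial point; it follows from Lemma~\ref{lemDistances restricted to apartments containing s}~(\ref{itIso_fixing s continuous}) plus translation invariance of a norm-induced distance (two isomorphisms $\A\to A_{x'}$ sending $+\infty$ to $\s$ differ by a translation of $\A$), so your argument is complete. Both routes are of comparable length; yours is slightly more transparent about which variable each estimate controls, while the paper's is more compact.
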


\begin{proof}
Using isomorphisms of apartments, we may assume that $\s$ is contained in $\A$. Since all norms on $\A$ are equivalent, it suffices to prove the assertion for a particular choice of $d'$. We choose $d'=d_{\theta}|_{\A^2}$, which is possible by Lemma~\ref{lemDistances restricted to apartments containing s}~(\ref{itDtheta norme}). We regard $\overline{C^v_f}$ as a subset of $\I$.

 Let $(x,u),(x',u')\in \I\times \overline{C^v_f}$. Let $\epsilon>0$. Let   $(u,u')\in U_\s(x,x')$ and $(v,v')\in U_\s(u,u')$ be such that $|u|+|u'|\leq d_\theta(x,x')+\epsilon$ and $|v|+|v'|\leq d_\theta(u,u')+\epsilon$. By Lemma~\ref{lemCommutativite de ladition}, $(u+v,u'+v')\in U_\s(x+_\s u,x'+_\s u')$, thus $d_\theta(x+u,x'+u')\leq |u|+|v|+|u'|+|v'|\leq d_\theta(x,x')+d_\theta(u,u')+2\epsilon$ and hence $d_\theta(x+u,x'+u')\leq d_\theta(x,x')+d_\theta(u,u')=d_{\theta,d'}\big((x,u),(x',u')\big)$. Lemma follows.  $\square$
\end{proof}

\begin{lemma}\label{rqueDistance atteinte} For all $x,x'\in \I$, there exists $(u,u')\in U_\s(x,x')$ such that  $d_\theta(x,x')=|u|+|u'|$.

\end{lemma}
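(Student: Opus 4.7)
The plan is to show the infimum defining $d_\theta(x,x')$ is attained by a standard compactness argument, where the main technical ingredient is the (Lipschitz) continuity of the addition map $+_\s$ proved in Lemma~\ref{lemLipschitziannite de l'addition}.

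First I would pick a minimizing sequence $\big((u_n,u'_n)\big)_{n\in\N}\in U_\s(x,x')^\N$ with $|u_n|+|u'_n|\to d_\theta(x,x')$. Since the sequence of sums is bounded, both $(u_n)$ and $(u'_n)$ are bounded sequences in the norm $|\ .\ |$ on $\A$. They lie in the closed subset $\overline{C^v_f}$ of the finite-dimensional space $\A$, so by the Bolzano--Weierstrass theorem we may extract a common subsequence along which $u_n\to u$ and $u'_n\to u'$ for some $(u,u')\in\overline{C^v_f}^2$. By continuity of the norm, $|u|+|u'|=d_\theta(x,x')$.

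It remains to check that $(u,u')\in U_\s(x,x')$, i.e.\ $x+_\s u=x'+_\s u'$. By Lemma~\ref{lemLipschitziannite de l'addition}, the maps $v\mapsto x+_\s v$ and $v\mapsto x'+_\s v$ from $(\overline{C^v_f},|\ .\ |)$ to $(\I,d_\theta)$ are Lipschitz continuous, so $x+_\s u_n\to x+_\s u$ and $x'+_\s u'_n\to x'+_\s u'$ in $(\I,d_\theta)$. Since $x+_\s u_n=x'+_\s u'_n$ for every $n$, the limits coincide, giving $x+_\s u=x'+_\s u'$ as required.

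The only potentially delicate point is ensuring that the equality $x+_\s u_n=x'+_\s u'_n$ survives passage to the limit; this is precisely where Lemma~\ref{lemLipschitziannite de l'addition} is used. Everything else is soft: compactness of closed bounded sets in a finite-dimensional normed space and continuity of the norm.
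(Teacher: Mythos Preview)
Your proof is correct and is essentially identical to the paper's own argument: a minimizing sequence, Bolzano--Weierstrass in $\overline{C^v_f}\subset\A$, and Lemma~\ref{lemLipschitziannite de l'addition} to pass the equality $x+_\s u_n=x'+_\s u'_n$ to the limit. The paper is simply more terse about the same steps.
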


\begin{proof}
Let $x,x'\in \I$ and let $\big((u_n,u'_n)\big)\in U_\s(x,x')^\N$ be such that $|u_n|+|u'_n|\rightarrow d_\theta(x,x')$. Then $(|u_n|),(|u'_n|)$  are bounded and thus after extraction we  can assume that $(u_n)$ and $(u'_n)$ converge in $(\overline{C^v_f},|\ .\ |)$. Lemma~\ref{lemLipschitziannite de l'addition} implies that $(\lim u_n,\lim u'_n)\in U_\s(x,x')$, which proves our assertion. $\square$

\end{proof}

\subsection{Geodesics in $\I$}

Fix $\theta=(|\ .\ |,\s)\in \Theta_+$. We now prove that for all $x_1,x_2\in \I$, there exists a geodesic for $d_\theta$ between $x_1$ and $x_2$. However we prove that when $\dim \A\geq 2$, such a geodesic is not unique (even if $\theta$ is associated with a Euclidean norm). The non-uniqueness  already appeared on the examples of Subsection~\ref{subDefinition_distances} (see Proposition~\ref{propExample_dtheta_restricted_apartment} and Proposition~\ref{propExample_dtheta_restricted_2}). As a comparison, Euclidean buildings equipped with their usual metrics are CAT(0)  and thus uniquely geodesic. 

  Using isomorphisms of apartments, we may assume that $\s=+\infty$. For all $x\in \A$ and $u\in \overline{C^v_f}$, $x+_{+\infty} u=x+u$. To simplify the notation we write $+$ instead of $+_{+\infty}$.

\begin{lemma}\label{lemGeodesiques}
\begin{enumerate} 

\item\label{itFormule 1 sur les distances} Let $x_1,x_2\in \I$ and let $(u_1,u_2)\in U_{+\infty}(x_1,x_2)$ be such that $d_\theta(x_1,x_2)=|u_1|+|u_2|$. Then for both $i\in \{1,2\}$ and all $t,t'\in [0,1]$, \[d_\theta(x_i+ tu_i,x_i+ t'u_i)=|t'-t||u_i|\] and \[d_\theta(x_1+ tu_1,x_2+ t'u_2)=(1-t)|u_1|+(1-t')|u_2|.\]

\item\label{itFormule 2 sur les distances} Let $x\in \A$ and $(u_1,u_2)\in U_{+\infty}(0,x)$ be such that $d_\theta(0,x)=|u_1|+|u_2|$. Then for all $t_1,t_1',t_2,t_2'\in [0,1]$ such that $t_1\leq t_1'$ and $t_2\leq t_2'$, \[d_\theta(t_1u_1-t_2u_2,t_1'u_1-t_2' u_2)=(t_1'-t_1)|u_1|+(t_2'-t_2)|u_2|.\]
\end{enumerate}
\end{lemma}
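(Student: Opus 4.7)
The strategy in both parts is the same: get the upper bound by exhibiting an explicit pair in $U_{+\infty}(\cdot,\cdot)$, and get the lower bound from the triangle inequality combined with the hypothesis that $(u_1,u_2)$ realizes $d_\theta$. The key structural facts I will use repeatedly are that $\overline{C^v_f}$ is a convex cone (so any nonnegative linear combination with nonnegative coefficients of $u_1,u_2$ lies in it), Lemma~\ref{lemCommutativite de ladition} for manipulating iterated $+_\s$ sums, and the explicit compatibility $(u_1,u_2)\in U_{+\infty}(x_1,x_2)$, i.e.\ $x_1+u_1=x_2+u_2$.

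For part~(\ref{itFormule 1 sur les distances}), I will first establish the second equality. Since $(1-t)u_1,(1-t')u_2\in\overline{C^v_f}$ and
\[(x_1+tu_1)+(1-t)u_1=x_1+u_1=x_2+u_2=(x_2+t'u_2)+(1-t')u_2,\]
the pair $((1-t)u_1,(1-t')u_2)$ lies in $U_{+\infty}(x_1+tu_1,x_2+t'u_2)$, giving the upper bound $d_\theta(x_1+tu_1,x_2+t'u_2)\le(1-t)|u_1|+(1-t')|u_2|$. The same trick with $(tu_1,0)$ and $(t'u_2,0)$ yields $d_\theta(x_1,x_1+tu_1)\le t|u_1|$ and $d_\theta(x_2,x_2+t'u_2)\le t'|u_2|$. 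Plugging these three inequalities into the triangle inequality
\[d_\theta(x_1,x_2)\le d_\theta(x_1,x_1+tu_1)+d_\theta(x_1+tu_1,x_2+t'u_2)+d_\theta(x_2+t'u_2,x_2)\]
and using the assumption $d_\theta(x_1,x_2)=|u_1|+|u_2|$ forces all three sub-inequalities to be equalities. This yields both the claimed formula for $d_\theta(x_1+tu_1,x_2+t'u_2)$ and, as a corollary applied with the endpoints $t=0$ and $t=1$, the identities $d_\theta(x_1,x_1+tu_1)=t|u_1|$ and $d_\theta(x_2,x_2+t'u_2)=t'|u_2|$. The equality $d_\theta(x_i+tu_i,x_i+t'u_i)=|t'-t||u_i|$ then follows: the upper bound comes from $((t'-t)u_i,0)\in U_{+\infty}$ (assuming $t\le t'$), and the lower bound from one more application of the triangle inequality $d_\theta(x_i,x_i+t'u_i)\le d_\theta(x_i,x_i+tu_i)+d_\theta(x_i+tu_i,x_i+t'u_i)$ using the values just computed.

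Part~(\ref{itFormule 2 sur les distances}) has an identical flavour, but now all four points live in $\A$ and we exploit that $+_{+\infty}$ there is ordinary addition, so the relation $(u_1,u_2)\in U_{+\infty}(0,x)$ becomes $x=u_1-u_2$. Set $y=t_1u_1-t_2u_2$ and $y'=t_1'u_1-t_2'u_2$. For the upper bound, since $t_i,t_i',1-t_i',t_i'-t_i$ are all in $[0,1]$, the cone property puts $(t_1u_1,t_2u_2)\in U_{+\infty}(0,y)$, $((1-t_1')u_1,(1-t_2')u_2)\in U_{+\infty}(y',x)$, and $((t_1'-t_1)u_1,(t_2'-t_2)u_2)\in U_{+\infty}(y,y')$, yielding three upper bounds whose sum is exactly $|u_1|+|u_2|=d_\theta(0,x)$. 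The triangle inequality $d_\theta(0,x)\le d_\theta(0,y)+d_\theta(y,y')+d_\theta(y',x)$ then forces each of them to be an equality, giving the claim for $d_\theta(y,y')$.

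The only step I consider mildly delicate is the bookkeeping around the sign conditions $t_1\le t_1'$, $t_2\le t_2'$ in part~(\ref{itFormule 2 sur les distances}): one must check that the coefficients appearing in each claimed pair $(au_1,bu_2)$ are nonnegative so that $au_1,bu_2\in\overline{C^v_f}$ (using that $\overline{C^v_f}$ is a convex cone containing $u_1,u_2$). Once that is verified, the proof is entirely driven by the triangle inequality together with the tightness hypothesis $d_\theta(x_1,x_2)=|u_1|+|u_2|$, which is exactly the mechanism that promotes inequalities to equalities.
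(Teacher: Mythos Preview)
Your proposal is correct and follows essentially the same approach as the paper: upper bounds come from exhibiting explicit pairs in $U_{+\infty}$, and the assumption $d_\theta(x_1,x_2)=|u_1|+|u_2|$ together with the triangle inequality forces the sub-inequalities to be equalities. The only difference is organizational: the paper first proves the formula $d_\theta(x_i+tu_i,x_i+t'u_i)=|t'-t||u_i|$ via a four-term chain $x_i\to x_i+tu_i\to x_i+t'u_i\to x_i+u_i=x_j+u_j\to x_j$ and then deduces the cross formula, whereas you first prove the cross formula via the three-term chain $x_1\to x_1+tu_1\to x_2+t'u_2\to x_2$ and then deduce the other one; both work equally well.
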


\begin{proof} Let $t,t'\in [0,1]$. We assume $t\leq t'$. Let $i\in \{1,2\}$ and let $j$ be such that $\{i,j\}=\{1,2\}$. As $x_i+ u_i=x_j+ u_j$, \[\begin{aligned} d_\theta(x_1,x_2)\leq d_\theta(x_i,x_i+tu_i)+& d_\theta(x_i+ tu_i,x_i+ t'u_i)\\& + d_\theta(x_i+ t'u_i,x_i+ u_i) +d_\theta(x_j+ u_j,x_j).\end{aligned}\]

By the definition of $d_\theta$, $d_\theta(x_i,x_i+ tu_i)\leq t|u_i|$, $d_\theta(x_i+ tu_i,x_i+ t'u_i)\leq (t'-t)|u_i|$, 
$d_\theta(x_i+ t'u_i, x_i+ u_i)\leq (1-t')|u_i|$ and $d_\theta(x_j+ u_j,x_j)\leq |u_j|$.

As $ d_\theta(x_1,x_2)=|u_1|+|u_2|=t|u_i|+(t'-t)|u_i|+(1-t')|u_i|+|u_j|,$ we deduce that
 $d_\theta(x_i,x_i+ tu_i)= t|u_i|$, $d_\theta(x_i+ tu_i,x_i+ t'u_i)= (t'-t)|u_i|$,  
 $d_\theta(x_i+ t'u_i, x_i+ u_i)= (1-t')|u_i|$ and $d_\theta(x_j+ u_j,x_j)= |u_j|$.

We no longer assume $t\leq t'$. One has \[\begin{aligned} d_\theta(x_1+ tu_1,x_2+ t'u_2) & \geq d_\theta(x_1,x_2)-d_\theta(x_1,x_1+ tu_1)-d_\theta(x_2,x_2+ t'u_2)\\ &=(1-t)|u_1|+(1-t')|u_2|.\end{aligned}\]

Moreover \[  \begin{aligned} d_\theta(x_1+ tu_1,x_2+ t'u_2) & \leq d_\theta(x_1+ tu_1,x_1+ u_1)+d_\theta(x_2+ u_2,x_2+ t'u_2)\\ & =(1-t)|u_1|+(1-t')|u_2|,\end{aligned}\] which proves~\ref{itFormule 1 sur les distances}. A similar argument proves~\ref{itFormule 2 sur les distances}. $\square$

\end{proof}

\begin{remark}
When we are in the situation of Proposition~\ref{propExample_dtheta_restricted_apartment}, then the $u_1$ and $u_2$ of Lemma~\ref{lemGeodesiques} are in the boundary of $\overline{C^v_f}$.

\end{remark}

\begin{prop}\label{propI est geodesic}
Equip $\I$ with $d_\theta$. For all $x_1,x_2\in \I$, there exists a geodesic from $x_1$ to $x_2$. Moreover, if $\dim \A \geq 2$, there exists a pair $(x_1,x_2)\in \I^2$ such that there are infinitely many geodesics from $x_1$ to $x_2$.
\end{prop}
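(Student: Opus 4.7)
The plan is to prove existence by an explicit two-piece concatenation based on Lemma~\ref{rqueDistance atteinte} and the first formula of Lemma~\ref{lemGeodesiques}, and to obtain non-uniqueness by exploiting the $L^1$-type geometry recorded in the second formula of the same lemma.

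For existence, given $x_1,x_2\in\I$, I would apply Lemma~\ref{rqueDistance atteinte} to obtain a pair $(u_1,u_2)\in U_{+\infty}(x_1,x_2)$ with $L:=|u_1|+|u_2|=d_\theta(x_1,x_2)$. Setting $y:=x_1+u_1=x_2+u_2$, I define $\gamma:[0,L]\to\I$ by
\[
\gamma(t)=x_1+\tfrac{t}{|u_1|}u_1 \text{ for } t\in[0,|u_1|], \qquad \gamma(t)=x_2+\tfrac{L-t}{|u_2|}u_2 \text{ for } t\in[|u_1|,L],
\]
with the obvious modification if $u_1$ or $u_2$ vanishes. The two pieces agree at $t=|u_1|$, both returning $y$, and applying the two formulas in Lemma~\ref{lemGeodesiques}(\ref{itFormule 1 sur les distances}) within each piece and across the junction gives $d_\theta(\gamma(s),\gamma(t))=|t-s|$ for all $s,t\in[0,L]$; so $\gamma$ is a geodesic from $x_1$ to $x_2$.

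For non-uniqueness, assume $\dim\A\geq 2$ and choose $x_2\in\A$ outside $\pm\overline{C^v_f}$, which is possible because $\overline{C^v_f}$ is a proper subcone of $\A$. Take $x_1=0$ and, by Lemma~\ref{rqueDistance atteinte}, pick a minimizing pair $(\tilde u_1,\tilde u_2)\in U_{+\infty}(0,x_2)$. If $\tilde u_1$ and $\tilde u_2$ were linearly dependent, then since they both lie in the convex cone $\overline{C^v_f}$ one would be a nonnegative multiple of the other, forcing $x_2=\tilde u_1-\tilde u_2\in\pm\overline{C^v_f}$, a contradiction. Hence $\tilde u_1$ and $\tilde u_2$ are linearly independent, and Lemma~\ref{lemGeodesiques}(\ref{itFormule 2 sur les distances}) applies: for any continuous nondecreasing path $s\mapsto(t_1(s),t_2(s))$ in $[0,1]^2$ from $(0,0)$ to $(1,1)$ normalized so that $t_1(s)|\tilde u_1|+t_2(s)|\tilde u_2|=s$, the curve $\gamma(s):=t_1(s)\tilde u_1-t_2(s)\tilde u_2$ is a geodesic from $0$ to $x_2$. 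The map $(t_1,t_2)\mapsto t_1\tilde u_1-t_2\tilde u_2$ is injective on $[0,1]^2$ by linear independence, so distinct monotone paths yield geometrically distinct geodesics; since infinitely many such paths exist, so do infinitely many geodesics from $0$ to $x_2$.

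The main obstacle is securing linear independence of the minimizing pair; I address this through the geometric condition $x_2\notin\pm\overline{C^v_f}$. The reduction to this condition is clean, but it implicitly requires $\overline{C^v_f}$ to be a proper subcone of $\A$, which is the role played by the hypothesis $\dim\A\geq 2$.
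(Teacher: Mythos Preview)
Your proof is correct and essentially identical to the paper's: the existence part is the same two-leg construction via a minimizing pair and Lemma~\ref{lemGeodesiques}(\ref{itFormule 1 sur les distances}), and for non-uniqueness you choose $x_2\in\A\setminus(\overline{C^v_f}\cup(-\overline{C^v_f}))$ and invoke Lemma~\ref{lemGeodesiques}(\ref{itFormule 2 sur les distances}) exactly as the paper does. The only cosmetic difference is that the paper exhibits an explicit one-parameter family $(\gamma_z)_{z\in[0,1]}$ of three-segment paths, whereas you quantify over all monotone paths in $[0,1]^2$; both arguments rest on the linear independence of the minimizing pair, deduced from $x_2\notin\pm\overline{C^v_f}$.
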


\begin{proof}
Let $x_1,x_2\in \I$ be such that $x_1\neq x_2$. Let $(u_1,u_2)\in U_{+\infty}(x_1,x_2)$ be such that $|u_1|+|u_2|=d_\theta(x_1,x_2)$. Let $a_1=\frac{|u_1|}{|u_1|+|u_2|}$ and $a_2=1-a_1$. Set $\frac{1}{0}u_1=\frac{1}{0}u_2=0$. 

Let $\gamma:[0,1]\rightarrow \I$ be defined by  $\gamma(t)=x_1+ \frac{t}{a_1}u_1$ if $t\in [0,a_1]$ and $\gamma(a_1+t)=x_2+ (1-\frac{t}{a_2})u_2$ if $t\in [0,a_2]$.   Then by Lemma~\ref{lemGeodesiques}~(\ref{itFormule 1 sur les distances}), for all $t,t'\in [0,1]$, $d_\theta(\gamma(t),\gamma(t'))=|t'-t|(|u_1|+|u_2|)$ and hence $\gamma$ is a geodesic from $x_1$ to $x_2$.

Let now  $x\in \A\backslash (\overline{C^v_f}\cup\overline{-C^v_f})$. Let us   construct infinitely many geodesics joining $0$ to $x$.  Let $(u_1,u_2)\in U_{+\infty} (0,x)$ be such that $d_\theta(0,x)=|u_1|+|u_2|$.  One has $x=u_1-u_2$ and thus $u_1,u_2\neq 0$. By Lemma~\ref{lemGeodesiques}, for $z\in [0,1]$, one has  $d_\theta(0,zu_1)+d_\theta(zu_1,x)=d_\theta(0,x)$. Thus our idea is to concatenate a geodesic from $0$ to $zu_1$ and a geodesic from $zu_1$ to $x$.   

Let $z\in [0,1]$. Set $t_z=\frac{z|u_1|}{|u_1|+|u_2|}$. Let $\gamma_z:[0,1]\rightarrow \A$ be defined by: \[\gamma_z(t)=\left\{\begin{aligned}& t\frac{|u_1|+|u_2|}{|u_1|}u_1 &\mathrm{\  for\ }t\in [0,t_z]\\& zu_1+\frac{|u_1|+|u_2|}{(1-z)|u_1|+|u_2|}(t-t_z)\big((1-z)u_1-u_2\big)& \mathrm{\ for\ }t\in [t_z,1].\end{aligned}\right.\]

Let $t,t'\in [0,1]$. First assume $0\leq t\leq t'\leq t_z$. Then by Lemma~\ref{lemGeodesiques}~(\ref{itFormule 2 sur les distances}), \[d\big(\gamma_z(t),\gamma_z(t')\big)=d(t\frac{|u_1|+|u_2|}{|u_1|}u_1,t'\frac{|u_1|+|u_2|}{|u_1|}u_1)=(t'-t)(|u_1|+|u_2 |).\]

Assume $t_z\leq t\leq t'\leq 1$. Let $\tilde{t}=\frac{|u_1|+|u_2|}{(1-z)|u_1|+|u_2|} (t-t_z)$ and $\tilde{t}'=\frac{|u_1|+|u_2|}{(1-z)|u_1|+|u_2|} (t'-t_z)$.

 Then by Lemma~\ref{lemGeodesiques}~(\ref{itFormule 2 sur les distances}), \[\begin{aligned} d\big(\gamma_z(t),\gamma_z(t')\big)&= d\big((\tilde{t}(1-z)+z)u_1-\tilde{t} u_2 ,(\tilde{t}'(1-z)+z)u_1-\tilde{t}' u_2\big)\\ & =(\tilde{t}'-\tilde{t})((1-z)|u_1|+|u_2|)\\ &= (t'-t)(|u_1|+|u_2|) .\end{aligned}\]
 
 Assume  $t\leq t_z\leq t'$. Then by Lemma~\ref{lemGeodesiques}~(\ref{itFormule 2 sur les distances}),\[\begin{aligned} d\big(\gamma_z(t),\gamma_z(t')\big)&= d\big(t\frac{|u_1|+|u_2|}{|u_1|}u_1 ,(\tilde{t}'(1-z)+z)u_1-\tilde{t}' u_2)\\ & =\big(\tilde{t}'(1-z)+z-t\frac{|u_1|+|u_2|}{|u_1|}\big)|u_1|+\tilde{t}'|u_2|\\ &= (t'-t)(|u_1|+|u_2|) .\end{aligned}\]
 
 Therefore, $\gamma_z$ is a geodesic from $0$ to $x$. Moreover, as $x\notin \overline{C^v_f}\cup \overline{-C^v_f}$, $\R u_1 \neq \R u_2$, thus  $\gamma_z([0,1])\neq \gamma_{z'}([0,1])$ for all $z\neq z'$ and the proposition is proved. $\square$

\end{proof}

\subsection{Equivalence of the distances of positive type}

The aim of this section is to show that if $\theta_1,\theta_2\in \Theta_+$, then $d_{\theta_1}$ and $d_{\theta_2}$ are equivalent. Fix a norm $|\ .\ |$ on $\A$. 

Fix two adjacent positive sector-germs $\s$ and $\s'$ and set $\theta=(|\ .\ |,\s)$ and $\theta'=(|\ .\ |,\s)$. We begin by proving the existence of $\ell\in \R_+$ such that $d_{\theta'}\leq \ell d_{\theta}$ (see Lemma~\ref{lemEquivalence des distances}).

 Fix an apartment $A_0$ containing $\s$ and $\s'$, which exists by (MA3). Let $\rho_\s:\I\overset{\s}{\rightarrow} A_0$ and $\rho_{s'}:\I\overset{\s'}{\rightarrow} A_0$. 


\begin{lemma}\label{lemegalite des distances pour les apparts contenant q et q'}

There exists $\ell_0\in \R_{>0}$ such that for every apartment  $B$ containing $\s$ and $\s'$,  for all $x,x'\in B$, $d_{\theta'}(x,x')\leq \ell_0 d_{\theta}(x,x')$.
\end{lemma}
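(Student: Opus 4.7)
The plan is to reduce the statement to a single apartment comparison on $A_0$ via the equivalence of norms in finite dimension, and then transport the inequality to any other apartment $B$ by exhibiting an isomorphism that is \emph{simultaneously} isometric for $d_\theta$ and $d_{\theta'}$.

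First, I would observe that $A_0$ contains both sector-germs $\s$ and $\s'$, so by Lemma~\ref{lemDistances restricted to apartments containing s}~(\ref{itDtheta norme}) (applied once to $\theta$ and once to $\theta'$, using that $A_0$ also contains $\s'$ since $\s,\s'$ are both contained in $A_0$), both $d_{\theta|A_0^2}$ and $d_{\theta'|A_0^2}$ are distances on $A_0$ induced by norms. Since $A_0$ is a finite-dimensional real-affine space, all norms are equivalent, hence there exists $\ell_0\in \R_{>0}$ such that
\[
d_{\theta'}(y,y')\leq \ell_0\, d_\theta(y,y')\qquad \text{for all } y,y'\in A_0.
\]

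Next, let $B$ be an arbitrary apartment containing $\s$ and $\s'$. Then $A_0\cap B$ contains both $\s$ and $\s'$. By the discussion at the beginning of Subsection~\ref{subsecRetractions}, there is a unique isomorphism of apartments $\phi:A_0\rightarrow B$ fixing $A_0\cap B$; in particular $\phi$ fixes $\s$ and $\phi$ fixes $\s'$. Applying Lemma~\ref{lemDistances restricted to apartments containing s}~(\ref{itIso_fixing s continuous}) to $\theta$ shows that $\phi:(A_0,d_{\theta|A_0^2})\rightarrow (B,d_{\theta|B^2})$ is an isometry; applying the same lemma to $\theta'$ shows that $\phi:(A_0,d_{\theta'|A_0^2})\rightarrow (B,d_{\theta'|B^2})$ is also an isometry.

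Finally, for $x,x'\in B$, setting $y=\phi^{-1}(x)$ and $y'=\phi^{-1}(x')\in A_0$, the two isometry properties combined with the norm-equivalence inequality give
\[
d_{\theta'}(x,x')=d_{\theta'}(y,y')\leq \ell_0\, d_\theta(y,y')=\ell_0\, d_\theta(x,x'),
\]
with the same constant $\ell_0$ as on $A_0$, which proves the lemma. The only nontrivial point is the \emph{simultaneous} isometry property of $\phi$, which is what makes the constant uniform over all admissible $B$; everything else is routine.
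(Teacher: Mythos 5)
Your proof is correct and follows essentially the same approach as the paper: get $\ell_0$ on $A_0$ by equivalence of norms, then transport the inequality to an arbitrary $B$ by noting that the unique isomorphism between $A_0$ and $B$ fixing $A_0\cap B$ fixes both $\s$ and $\s'$, hence is simultaneously a $d_\theta$- and $d_{\theta'}$-isometry. The paper phrases the last step via the restricted retractions $\rho_\s$ and $\rho_{\s'}$ and the observation $\rho_{\s|B}=\rho_{\s'|B}$, but that is exactly the inverse of your $\phi$, so the argument is the same.
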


\begin{proof}
By Lemma~\ref{lemDistances restricted to apartments containing s}~(\ref{itDtheta norme}) and the fact that all the norms on $\A$ are equivalent, there exists $\ell_0\in \R_{>0}$ such that for all $x,x'\in A$, $d_{\theta'}(x,x')\leq \ell_0  d_{\theta}(x,x')$. Let $B$ be an apartment containing $\s$ and $\s'$.  Let $x,x'\in B$. By Lemma~\ref{lemDistances restricted to apartments containing s}~(\ref{itIso_fixing s continuous}), $d_\theta(x,x')=d_{\theta}(\rho_\s(x),\rho_\s(x'))$ and  $d_{\theta'}(\rho_{\s'}(x),\rho_{\s'}(x'))=d_{\theta'}(x,x')$. Moreover $\rho_{\s'|B}=\rho_{\s|B}$, which proves the lemma. $\square$
\end{proof}

We now fix an apartment $B_0$ containing $\s$ but not $\s'$. Let $\F_\infty$ be the sector-panel direction dominated by $\s$ and $\s'$.   Using Lemma~\ref{lemDecoupage d'un appartement en 2}, one  writes $B_0=D_1\cup D_2$, where  $D_1$ and $D_2$ are two opposite half-apartments whose wall contains $\F_\infty$ and such that $D_i\cup \s$ is contained in some apartment $B_i$ for both $i\in \{1,2\}$. We assume that $D_1\supset \s$. 

Let $M_0$ be a wall of $A_0$ containing $\F_\infty$ and $t_0:A_0\rightarrow A_0$ be the reflection with respect to $M_0$. 
\begin{lemma}\label{lemImage des differentes retractions}
One has: $\left\{\begin{aligned} & \rho_{\s}(x) =\rho_{\s'}(x) &\mathrm{\ if\ }x\in D_1\\ & \rho_{\s}(x)=\tilde{t}\circ \rho_{\s'}(x) &\mathrm{\ if\ }x\in D_2\end{aligned}\right.,$ where $\tilde{t}=\tau\circ t_0$, for some translation $\tau$ of $A_0$.
\end{lemma}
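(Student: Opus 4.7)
The plan is to handle the two cases $x\in D_1$ and $x\in D_2$ separately, and in each case to compute both $\rho_\s(x)$ and $\rho_{\s'}(x)$ using a convenient common apartment.

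For $x\in D_1$, I would note that applying Lemma~\ref{lemDecoupage d'un appartement en 2} to $B_0$ (whose ``missing'' sector-germ dominating $\F_\infty$ is $\s'$) produces $B_1\supset D_1\cup \s'$. Combined with $D_1\supset \s$, the single apartment $B_1$ contains $x$, $\s$ and $\s'$ simultaneously. By the discussion in Subsection~\ref{subsecRetractions}, both restrictions $\rho_\s|_{B_1}$ and $\rho_{\s'}|_{B_1}$ coincide with the unique isomorphism $B_1\rightarrow A_0$ fixing $B_1\cap A_0$ (which contains $\s\cup \s'$), so evaluating at $x$ gives $\rho_\s(x)=\rho_{\s'}(x)$.

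For $x\in D_2$, I would set $\psi:B_0\overset{\s}{\rightarrow} A_0$ and $\chi:B_2\overset{\s'}{\rightarrow} A_0$. Since $B_0\supset D_2\cup \s$ and $B_2\supset D_2\cup \s'$, one has $\rho_\s(x)=\psi(x)$ and $\rho_{\s'}(x)=\chi(x)$. The natural tool to relate $\psi$ and $\chi$ is Lemma~\ref{lem3 appartements s'intersectant bien} applied to $(A_1,A_2,A_3):=(B_0,B_2,A_0)$. By Lemma~\ref{lemIntersection de deux apparts contenant un demi-appart} each pairwise intersection of these three distinct apartments is a true half-apartment; moreover $A_0\cap B_0$ contains $\s$ but not $\s'$ and $A_0\cap B_2$ contains $\s'$ but not $\s$, so these two half-apartments of $A_0$ have walls parallel to $M_0$ and opposite directions. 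Setting $\phi_1=\psi^{-1}$, $\phi_2=\chi^{-1}$, $\phi_3:B_2\overset{B_0\cap B_2}{\rightarrow}B_0$, and letting $s$ denote the reflection of $A_0$ about the wall of $A_0\cap B_0$, Lemma~\ref{lem3 appartements s'intersectant bien} yields $\phi_3\circ\phi_2=\phi_1\circ s$, which rewrites as $\psi\circ\phi_3=s\circ\chi$. Since $\phi_3$ fixes $B_0\cap B_2\supset D_2\ni x$, this evaluates at $x$ to $\psi(x)=s(\chi(x))$, i.e.\ $\rho_\s(x)=s\circ \rho_{\s'}(x)$. The walls of $A_0\cap B_0$ and $M_0$ being parallel walls of $A_0$, the reflections $s$ and $t_0$ differ by a translation $\tau$ of $A_0$, and $\tilde t:=s=\tau\circ t_0$ finishes the argument.

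The step I expect to be most delicate is verifying the ``opposite directions'' hypothesis of Lemma~\ref{lem3 appartements s'intersectant bien}, which amounts to showing $\s\not\subset B_2$. I would argue by contradiction: if $\s\subset B_2$, the intersection $B_0\cap B_2$ would contain $D_2\cup \s$; but $\s\subset D_1$ lies on the side of the wall $M'$ of $D_2$ opposite to $D_2$, so no proper half-apartment of $B_0$ can contain both $D_2$ and $\s$. By Lemma~\ref{lemIntersection de deux apparts contenant un demi-appart} this forces $B_0=B_2$, contradicting $\s'\in B_2\backslash B_0$.
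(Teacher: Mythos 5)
Your proof is correct, and it follows a genuinely different path from the paper's. Both treatments of the $D_1$ case are essentially the same: $B_1$ contains $x$, $\s$ and $\s'$ simultaneously, so the unique isomorphism $B_1\overset{B_1\cap A_0}{\rightarrow} A_0$ computes both retractions. The difference is in the $D_2$ case. The paper applies Lemma~\ref{lem3 appartements s'intersectant bien} to the triple $(B_1,B_2,B_0)$: there the pairwise intersections are $D_1$, $D_2$ and $B_1\cap B_2$, and the ``opposite directions'' hypothesis is automatic from Lemma~\ref{lemDecoupage d'un appartement en 2}. This yields a reflection $t$ of $B_1$, which the paper then transports to $A_0$ by conjugating with $\phi_3:B_1\overset{B_1\cap A_0}{\rightarrow}A_0$, defining $\tilde t=\phi_3\circ t\circ\phi_3^{-1}$. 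You instead apply Lemma~\ref{lem3 appartements s'intersectant bien} directly to $(B_0,B_2,A_0)$, which produces the reflection $s$ of $A_0$ about the wall of $A_0\cap B_0$ without any further conjugation — cleaner at the end, but at the cost of having to verify the hypotheses of Lemma~\ref{lem3 appartements s'intersectant bien} from scratch. You correctly identify where the work is: showing $\s\not\subset B_2$, from which ``opposite directions'' follows, and your contradiction argument for that (a proper half-apartment of $B_0$ containing $D_2$ has wall parallel to $M'$ and so cannot contain the sector $\s\subset D_1$, forcing $B_0=B_2$ via Lemma~\ref{lemIntersection de deux apparts contenant un demi-appart}) is sound. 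One small point worth making explicit is that $A_0\cap B_0$ and $A_0\cap B_2$ are half-apartments in the first place: this follows because each pair of apartments shares both the sector-germ ($\s$ or $\s'$) and the opposite sector-panel direction $\F'_\infty$, so the argument in the proof of Lemma~\ref{lemIntersection de deux apparts contenant un demi-appart} applies — but this should be said, since Lemma~\ref{lemIntersection de deux apparts contenant un demi-appel} as stated assumes a common half-apartment. Similarly, ``half-apartment of $A_0$ containing $\s$ but not $\s'$ has wall parallel to $M_0$'' is true (a root positive on the chamber direction of $\s$ and changing sign across the adjacent chamber direction of $\s'$ must vanish on the panel $\F_\infty$), but deserves a sentence. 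Overall, your route trades the paper's intermediate passage through $B_1$ for a more symmetric direct argument in $A_0$, which I find quite natural; the paper's choice is easier to check because the sundial decomposition of $B_0$ hands it the Lemma~\ref{lem3 appartements s'intersectant bien} hypotheses for free.
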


\begin{proof}
Let $\rho_{\s,B_1}:\I\overset{\s}{\rightarrow}B_1$ and $\rho_{\s',B_1}:\I\overset{\s'}{\rightarrow}B_1$.

Let $\phi_i:B_0\overset{B_0\cap B_i}{\rightarrow}B_i$, for $i\in\{1,2\}$ and $\phi:B_2\overset{B_1\cap B_2}{\rightarrow} B_1$. Let $t$ be the reflection of $B_1$ with respect to  $D_1\cap D_2$. By Lemma~\ref{lem3 appartements s'intersectant bien}, the following  diagram is commutative: \[\xymatrix{ B_0\ar[d]^{\phi_1}\ar[r]^{\phi_2} & B_2\ar[d]^{\phi}\\ B_1\ar[r]^{t}&B_1.}\]
Let $x\in D_1$. Then $\rho_{\s,B_1}(x)=x=\rho_{\s',B_1}(x)$. Let $\phi_3:B_1\overset{B_1\cap A_0}{\rightarrow} A_0$. Then $\rho_{\s}(x)=\phi_3 (\rho_{\s,B_1}(x))=\phi_3 (\rho_{\s',B_1}(x))=\rho_{\s'}(x)$.

  Let $x\in D_2$. One has $\rho_{\s,B_1}(x)=\phi_1(x)$ and $\rho_{\s',B_1}(x)=\phi(x)$ and thus $\rho_{\s,B_1}(x)=t\circ \rho_{\s',B_1}(x)$.  Let $\tilde{t}$  be such that the following diagram commutes: \[\xymatrix{B_1 \ar[d]^{\phi_3}\ar[r]^{t} & B_1\ar[d]^{\phi_3}\\ A_0\ar[r]^{\tilde{t}}&A_0.}\]
  
  Then $\rho_{\s}(x)=\tilde{t}\circ \rho_{\s'}(x)$.
  
   Moreover $\tilde{t}$ fixes  $\phi_3(D_1\cap D_2)$, which contains $\F_\infty$. Thus  $\tilde{t}=\tau\circ t_0$ for some translation $\tau$ of $A_0$ (by Lemma~\ref{lemAutomorphisme d'appart fixant un mur}). $\square$

\end{proof}

By Lemma~\ref{lemDistances restricted to apartments containing s}~(\ref{itDtheta norme}) and since every affine map on $A_0$ is Lipschitz, there exists $\ell_1\in \R_+$ such that $t_0:(A_0,d_{\theta'})\rightarrow (A_0,d_{\theta'})$ is $\ell_1$-Lipschitz. As $t_0$ is an involution, $\ell_1\geq 1$.

\begin{lemma}\label{lemComparaison des distances sur les demi-apparts}
Let $\ell_0$ be as in Lemma~\ref{lemegalite des distances pour les apparts contenant q et q'}. Then for all $x,x'\in B_0$, $d_{\theta'}(x,x')\leq \ell_0 \ell_1 d_{\theta}(x,x')$.
\end{lemma}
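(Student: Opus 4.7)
The strategy is to transport everything into the apartment $A_0$, which contains both $\s$ and $\s'$, via the two retractions $\rho_\s$ and $\rho_{\s'}$. Since $B_0\supset\s$, Lemma~\ref{lemDistances restricted to apartments containing s}~(\ref{itIso_fixing s continuous}) makes $\rho_\s\colon B_0\to A_0$ a $d_\theta$-isometry, and Lemma~\ref{lemegalite des distances pour les apparts contenant q et q'} applied to $A_0$ gives $d_{\theta'}(a,a')\le\ell_0 d_\theta(a,a')$ for all $a,a'\in A_0$. Combining these two facts, it suffices to prove
\[d_{\theta'}(x,x')\le\ell_1\, d_{\theta'}\bigl(\rho_\s(x),\rho_\s(x')\bigr)\qquad\text{for all }x,x'\in B_0,\]
after which $d_{\theta'}(x,x')\le\ell_1 d_{\theta'}(\rho_\s(x),\rho_\s(x'))\le\ell_0\ell_1 d_\theta(\rho_\s(x),\rho_\s(x'))=\ell_0\ell_1 d_\theta(x,x')$.

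\textbf{Single-sided cases.} Using the decomposition $B_0=D_1\cup D_2$, first suppose $x,x'\in D_i$ for some $i\in\{1,2\}$. Then both points lie in $B_i$, which contains $\s'$, so Lemma~\ref{lemDistances restricted to apartments containing s}~(\ref{itIso_fixing s continuous}) applied with the sector-germ $\s'$ yields $d_{\theta'}(x,x')=d_{\theta'}(\rho_{\s'}(x),\rho_{\s'}(x'))$. For $i=1$, Lemma~\ref{lemImage des differentes retractions} gives $\rho_{\s'}=\rho_\s$ on $D_1$, so the right-hand side equals $d_{\theta'}(\rho_\s(x),\rho_\s(x'))$, and the required bound holds since $\ell_1\ge 1$. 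For $i=2$, the same lemma gives $\rho_{\s'}=\tilde t^{-1}\circ\rho_\s$ on $D_2$; since $\tilde t=\tau\circ t_0$ where $\tau$ is a translation of $A_0$ (hence a $d_{\theta'}$-isometry) and $t_0$ is $\ell_1$-Lipschitz, the map $\tilde t^{-1}=t_0\circ\tau^{-1}$ is also $\ell_1$-Lipschitz, so $d_{\theta'}(x,x')\le\ell_1 d_{\theta'}(\rho_\s(x),\rho_\s(x'))$.

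\textbf{Mixed case.} If $x\in D_1$ and $x'\in D_2$ (the reverse is symmetric), the affine segment $[x,x']_{B_0}$ meets the common wall $M'=D_1\cap D_2$ at some point $z$. Applying the two single-sided cases to the pairs $(x,z)$ and $(z,x')$ together with the triangle inequality gives
\[d_{\theta'}(x,x')\le d_{\theta'}(x,z)+d_{\theta'}(z,x')\le \ell_1\bigl(d_{\theta'}(\rho_\s(x),\rho_\s(z))+d_{\theta'}(\rho_\s(z),\rho_\s(x'))\bigr).\]
Because $\rho_\s|_{B_0}$ is an isomorphism of apartments, $\rho_\s(z)$ lies on $[\rho_\s(x),\rho_\s(x')]_{A_0}$; and because $d_{\theta'}|_{A_0}$ is induced by a norm (Lemma~\ref{lemDistances restricted to apartments containing s}~(\ref{itDtheta norme}) applied to $A_0\supset\s'$), it is additive along straight segments. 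Hence the bracketed sum equals $d_{\theta'}(\rho_\s(x),\rho_\s(x'))$, and the reduction is complete. The only delicate point in the whole argument is the Lipschitz bound for $\tilde t^{-1}$ in the $D_2$-case, which is controlled by the factorisation $\tilde t=\tau\circ t_0$ together with the fact that translations are isometries for any norm-induced distance on $A_0$.
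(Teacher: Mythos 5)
Your proof is correct and follows essentially the same strategy as the paper's: split $B_0 = D_1 \cup D_2$, handle each half-apartment via Lemma~\ref{lemDistances restricted to apartments containing s}~(\ref{itIso_fixing s continuous}) combined with Lemma~\ref{lemImage des differentes retractions}, and bridge the two via a point on the common wall together with additivity of a norm-induced distance along a segment. The only cosmetic difference is in the mixed case: you push both endpoints into $A_0$ via $\rho_\s$ and invoke additivity of $d_{\theta'}$ on $A_0$ (which requires noting that $\rho_\s|_{B_0}$ maps segments to segments), whereas the paper stays in $B_0$ and invokes additivity of $d_\theta$ there via Lemma~\ref{lemDistances restricted to apartments containing s}~(\ref{itDtheta norme}); both routes are equally short and rely on the same lemma.
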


\begin{proof}
 
 Let $i\in \{1,2\}$ and $x,x'\in D_i$. By Lemma~\ref{lemDistances restricted to apartments containing s}~(\ref{itIso_fixing s continuous}),  $d_\theta(x,y)=d_\theta(\rho_\s(x),\rho_\s(x'))$ and $d_{\theta'}(x,x')=d_{\theta'}(\rho_{\s'}(x),\rho_{\s'}(x'))$.  By Lemma~\ref{lemImage des differentes retractions}, for all $x,x'\in D_i$, $d_{\theta'}(x,x')\leq \ell_0\ell_1 d_\theta(x,x')$. 
 
Let $x,x'\in B_0$. Assume that $x\in D_1$ and $x'\in D_2$. Let $m\in [x,x']_{B_0}\cap D_1\cap D_2$. Then $d_{\theta'}(x,x')\leq d_{\theta'}(x,m)+d_{\theta'}(m,x')\leq \ell_0\ell_1 \big(d_\theta(x,m)+d_\theta(m,x')\big)$. By Lemma~\ref{lemDistances restricted to apartments containing s}~(\ref{itDtheta norme}), $d_\theta(x,m)+d_\theta(m,x')=d_\theta(x,x')$ and the lemma follows.
 $\square$
\end{proof}

\begin{lemma}\label{lemCaracterisation des fonctions lipschitziennes}
Let $(X,d_X)$ be a metric space, $f:(\I,d_{\theta})\rightarrow (X,d_X)$ be a map and $k\in \R_+$. Then $f$ is $k$-Lipschitz if and only if for every apartment $A$ containing $\s$, $f|_{A}$ is $k$-Lipschitz.
\end{lemma}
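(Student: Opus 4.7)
The forward implication is immediate: if $f:(\I,d_\theta)\to (X,d_X)$ is $k$-Lipschitz continuous, then so is its restriction to any subset, in particular to any apartment containing $\s$.

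For the reverse direction, my plan is to exhibit, for any pair $x,x'\in \I$, an intermediate point $y$ such that $x,y$ lie in a common apartment containing $\s$, and likewise $y,x'$. Concretely, by Lemma~\ref{rqueDistance atteinte} we pick $(u,u')\in U_\s(x,x')$ realizing the infimum, so that $d_\theta(x,x')=|u|+|u'|$, and we set $y=x+_\s u=x'+_\s u'$. Then any apartment $A_1$ containing $x+\s$ contains both $x$ and $y$ together with $\s$ (by Definition/Proposition~\ref{defn/prop Addition}), and similarly any apartment $A_2$ containing $x'+\s$ contains $x'$, $y$, and $\s$.

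Next, I would compute the partial distances inside these two apartments. By Lemma~\ref{lemGeodesiques}~(\ref{itFormule 1 sur les distances}) applied with $t=0,t'=1$, one obtains $d_\theta(x,y)=|u|$ and $d_\theta(y,x')=|u'|$. Applying the hypothesis to $f_{|A_1}$ and $f_{|A_2}$ gives
\[
d_X\bigl(f(x),f(y)\bigr)\le k|u|\quad\text{and}\quad d_X\bigl(f(y),f(x')\bigr)\le k|u'|.
\]
The triangle inequality in $(X,d_X)$ then yields
\[
d_X\bigl(f(x),f(x')\bigr)\le k(|u|+|u'|)=k\,d_\theta(x,x'),
\]
which is the desired global Lipschitz estimate.

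There is no serious obstacle here: the crucial ingredients are the existence of a minimizing pair $(u,u')$ (Lemma~\ref{rqueDistance atteinte}) and the fact that the ``break point'' $y=x+_\s u$ automatically lies in an apartment containing $\s$ together with $x$ (and in another one together with $x'$). The only point deserving a little care is checking that $d_\theta(x,y)=|u|$ inside $A_1$: this follows either directly from Lemma~\ref{lemGeodesiques}~(\ref{itFormule 1 sur les distances}), or equivalently from the fact that $(u,0)\in U_\s(x,y)$ combined with Lemma~\ref{lemDistances restricted to apartments containing s}~(\ref{itDtheta norme}), which identifies $d_{\theta|A_1^2}$ with a norm-induced distance.
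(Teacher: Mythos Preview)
Your proof is correct and follows essentially the same approach as the paper: pick a minimizing pair $(u,u')\in U_\s(x,x')$ via Lemma~\ref{rqueDistance atteinte}, use the intermediate point $y=x+_\s u=x'+_\s u'$ which lies in both $A_x$ and $A_{x'}$, and apply the triangle inequality in $X$. The only minor difference is that the paper uses just the trivial upper bound $d_\theta(x,x+_\s u)\le |u|$ (from $(u,0)\in U_\s(x,x+_\s u)$), whereas you additionally invoke Lemma~\ref{lemGeodesiques} to get equality; you note yourself that the upper bound alone suffices.
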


\begin{proof}
One implication is clear.  Assume that for  every apartment $A$ containing $\s$, $f|_{A}$ is $k$-Lipschitz. Let $x,x'\in \I$ and $A_{x},A_{x'}$ be apartments containing $x+\s$ and $x'+\s$. Let $(u,u')\in U_\s(x,x')$ be such that $|u|+|u'|=d_\theta(x,x')$, which exists by Lemma~\ref{rqueDistance atteinte}. Then $x+_\s u\in A_x$ and $x'+_\s u'\in A_{x'}$. One has \[d_X\big(f(x),f(x')\big)\leq d_X\big(f(x),f(x+_\s u)\big)+d_X\big(f(x'+_\s u'),f(x')\big)\leq k(|u|+|u'|)\leq kd_\theta(x,x').\] $\square$
\end{proof}

\begin{lemma}\label{lemEquivalence des distances}
One has $d_{\theta'}\leq \ell_0 \ell_1 d_{\theta}$.
\end{lemma}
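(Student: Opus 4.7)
The plan is to apply Lemma~\ref{lemCaracterisation des fonctions lipschitziennes} to the identity map viewed as $\Id:(\I,d_\theta)\rightarrow (\I,d_{\theta'})$. This reduces the global inequality $d_{\theta'}\leq \ell_0\ell_1 d_\theta$ on $\I^2$ to the corresponding inequality $d_{\theta'|A^2}\leq \ell_0\ell_1 d_{\theta|A^2}$ on every apartment $A$ containing $\s$. So the whole proof becomes a two-case split according to whether such an apartment $A$ also contains $\s'$ or not.

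In the first case, $A$ contains both $\s$ and $\s'$, and Lemma~\ref{lemegalite des distances pour les apparts contenant q et q'} directly yields $d_{\theta'}(x,x')\leq \ell_0 d_\theta(x,x')$ for all $x,x'\in A$. Since $t_0$ is an involution on $A_0$ with $\ell_1\geq 1$ (as observed right before Lemma~\ref{lemComparaison des distances sur les demi-apparts}), this already implies $d_{\theta'}\leq \ell_0\ell_1 d_\theta$ on $A^2$. In the second case, $A$ contains $\s$ but not $\s'$, so $A$ can be taken as the apartment called $B_0$ in the setup preceding Lemma~\ref{lemComparaison des distances sur les demi-apparts}. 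The constants $\ell_0$ (from Lemma~\ref{lemegalite des distances pour les apparts contenant q et q'}) and $\ell_1$ (the Lipschitz constant of $t_0$ on $(A_0,d_{\theta'})$) do not depend on the choice of $B_0$, so Lemma~\ref{lemComparaison des distances sur les demi-apparts} applied with $B_0=A$ gives $d_{\theta'|A^2}\leq \ell_0\ell_1 d_{\theta|A^2}$.

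The application of Lemma~\ref{lemCaracterisation des fonctions lipschitziennes} requires a target metric space, which is simply $(\I,d_{\theta'})$; the hypothesis on $f=\Id$ restricted to each apartment containing $\s$ is exactly what the two cases above establish. There is no real obstacle here: once Lemma~\ref{lemCaracterisation des fonctions lipschitziennes} is available, the proof is just an assembly of the previous three lemmas. The only minor point to verify is that Lemma~\ref{lemComparaison des distances sur les demi-apparts} is indeed valid for any apartment containing $\s$ but not $\s'$, which is true because its proof only uses the splitting $A=D_1\cup D_2$ provided by Lemma~\ref{lemDecoupage d'un appartement en 2} and the identification of retractions given by Lemma~\ref{lemImage des differentes retractions}, both of which apply uniformly to any such $A$.
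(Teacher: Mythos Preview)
Your proposal is correct and follows exactly the paper's own approach: applying Lemma~\ref{lemCaracterisation des fonctions lipschitziennes} to $\Id:(\I,d_\theta)\to(\I,d_{\theta'})$ and checking the Lipschitz bound on each apartment containing $\s$ via Lemma~\ref{lemegalite des distances pour les apparts contenant q et q'} (when $A\ni\s'$) and Lemma~\ref{lemComparaison des distances sur les demi-apparts} (when $A\not\ni\s'$). Your additional remarks that $\ell_1\geq 1$ and that the constants are independent of the choice of $B_0$ are exactly the small points needed to make the assembly airtight.
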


\begin{proof}
By Lemma~\ref{lemCaracterisation des fonctions lipschitziennes}, Lemma~\ref{lemegalite des distances pour les apparts contenant q et q'} and Lemma~\ref{lemComparaison des distances sur les demi-apparts}, $\Id:(\I,d_{\theta})\rightarrow (\I,d_{\theta'})$ is $\ell_0 \ell_1$-Lipschitz, which proves the lemma.  $\square$
\end{proof}

\begin{theorem}\label{thmEquivalence des distances}
Let $\theta_1,\theta_2\in \Theta_+$. Then the metrics $d_{\theta_1}$ and $d_{\theta_2}$ are equivalent. 
\end{theorem}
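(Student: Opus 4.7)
The plan is to combine the adjacent-germ case already handled by Lemma~\ref{lemEquivalence des distances} with two soft reductions: equivalence of norms on $\A$, and gallery-connectedness of positive sector-germs under adjacency.

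First I would reduce to the case of a common norm. Write $\theta_i = (|\ .\ |_i, \s_i)$ for $i\in\{1,2\}$ and fix any norm $|\ .\ |$ on $\A$. Since $\A$ is finite dimensional, there exist $k_i, \ell_i \in \R_{>0}$ with $k_i |\ .\ | \leq |\ .\ |_i \leq \ell_i |\ .\ |$. For any $x, x' \in \I$ and any $(u,u') \in U_{\s_i}(x,x')$, summing the bounds gives $k_i(|u|+|u'|) \leq |u|_i + |u'|_i \leq \ell_i (|u|+|u'|)$, and taking the infimum over $U_{\s_i}(x,x')$ yields $k_i d_{(|\ .\ |,\s_i)} \leq d_{\theta_i} \leq \ell_i d_{(|\ .\ |,\s_i)}$. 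Thus it suffices to show that $d_{(|\ .\ |,\s_1)}$ and $d_{(|\ .\ |,\s_2)}$ are equivalent.

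Second I would reduce to adjacent positive sector-germs. Since $\s_1$ and $\s_2$ are both positive, they are germs of splayed chimneys of the same sign, so by (MA3) they lie in a common apartment $A$. Transport via an isomorphism of apartments to $\A$; inside $\A$, the Weyl group $W^v$ acts simply transitively on positive chambers, which are the positive sector-germs, and $W^v$ is generated by the simple reflections. Therefore there is a finite gallery of positive sector-germs $\s_1 = \mathfrak{t}_0, \mathfrak{t}_1, \ldots, \mathfrak{t}_n = \s_2$ with $\mathfrak{t}_j$ adjacent to $\mathfrak{t}_{j+1}$ for each $j$.

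Third I would iterate Lemma~\ref{lemEquivalence des distances}. For each $j$, the lemma applied to the adjacent pair $(\mathfrak{t}_j,\mathfrak{t}_{j+1})$ (with the fixed norm $|\ .\ |$) provides a constant $c_j > 0$ such that $d_{(|\ .\ |,\mathfrak{t}_{j+1})} \leq c_j\, d_{(|\ .\ |,\mathfrak{t}_j)}$; swapping the roles of the two adjacent germs (the hypothesis of the lemma is symmetric in $\s$ and $\s'$) gives a constant $c_j' > 0$ with $d_{(|\ .\ |,\mathfrak{t}_j)} \leq c_j'\, d_{(|\ .\ |,\mathfrak{t}_{j+1})}$. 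Composing along the gallery yields $d_{(|\ .\ |,\s_2)} \leq \bigl(\prod_j c_j\bigr) d_{(|\ .\ |,\s_1)}$ and the reverse inequality with $\prod_j c_j'$, hence these two distances are equivalent, and combining with the norm reduction proves that $d_{\theta_1}$ and $d_{\theta_2}$ are equivalent.

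There is no real obstacle: the technical content of equivalence is concentrated in Lemma~\ref{lemEquivalence des distances}, and the remaining work is bookkeeping using norm equivalence and the elementary combinatorics of the Coxeter complex. The only small point to be careful about is ensuring that the intermediate germs $\mathfrak{t}_j$ are positive, which is automatic because the gallery stays in the fundamental Tits cone (all chambers of $W^v$ acting on $C^v_f$ are positive).
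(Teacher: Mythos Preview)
Your proposal is correct and follows essentially the same route as the paper: reduce to a common norm by equivalence of norms on the finite-dimensional space $\A$, connect $\s_1$ to $\s_2$ by a gallery of positive sector-germs, and iterate Lemma~\ref{lemEquivalence des distances} along the gallery, invoking symmetry for the reverse inequality. Your write-up simply fills in a few details (why the gallery exists, why the intermediate germs are positive) that the paper leaves implicit.
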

\begin{proof}
For $i\in \{1,2\}$, write $\theta_i=(|\ .\ |_i,\s_i)$. As all the norms on $\A$ are equivalent, we may assume $|\ .\ |_1=|\ .\ |_2=|\ .\ |$. Let $\mathfrak{t}^0=\s_1,\ldots,\mathfrak{t}^n=\s_2$ be a gallery between $\s_1$ and $\s_2$. For  $i\in \llbracket 0,n\rrbracket$ set $\theta^i=(|\ .\ |,\mathfrak{t}^i)$. By an induction using Lemma~\ref{lemEquivalence des distances}, there exists $\ell\in \R_{>0}$ such that $d_{\theta_1}=d_{\theta^0}\leq \ell d_{\theta^n}=K d_{\theta_2}$. Theorem follows by symmetry. $\square$
\end{proof}

We thus obtain (at most) two topologies on $\I$: the topology $\mathscr{T}_+$ induced by $d_{\theta_+}$, for each $\theta_+\in \Theta_+$ and the topology $\mathscr{T}_-$ induced by $d_{\theta_-}$, for each $\theta_-\in \Theta_-$. We will see that when $\I$ is not a building, these topologies are different (see Corollary~\ref{corNon egalite de T+ et T-}).

\begin{cor}\label{corTopologie induite sur les appartements}
Let $A$ be an apartment of $\I$. Then the topology on $A$ induced by $\mathscr{T}_+$ is the affine topology on $A$.
\end{cor}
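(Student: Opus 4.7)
The plan is to reduce the statement to the already-handled case of an apartment that contains a positive sector-germ, via the finite decomposition of Proposition~\ref{lemDecoupages des apparts}. By Theorem~\ref{thmEquivalence des distances} it suffices to verify the assertion for a single, well-chosen distance of positive type, so fix $\theta=(|\ .\ |,\s)\in\Theta_+$ and write $\tau_{\mathrm{aff}}$ for the affine topology on $A$ and $\tau_\theta$ for the topology that $d_\theta$ induces on $A$. If $A$ contains $\s$, Lemma~\ref{lemDistances restricted to apartments containing s}~(\ref{itDtheta norme}) states that $d_{\theta|A^2}$ is induced by a norm on $A$, hence $\tau_\theta=\tau_{\mathrm{aff}}$; we may therefore assume $A$ does not contain $\s$.

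Apply Proposition~\ref{lemDecoupages des apparts} to obtain a finite covering $A=\bigcup_{i=1}^{k} P_i$ in which each $P_i$ is a finite intersection of half-apartments of $A$, is contained in some apartment $A_i$ that does contain $\s$, and is pointwise fixed by an isomorphism of apartments $f_i:A_i\to A$. The crucial two assertions are then, for each $i$:
\textbf{(a)} the subspace topologies that $\tau_{\mathrm{aff}}$ and $\tau_\theta$ induce on $P_i$ agree;
\textbf{(b)} $P_i$ is closed for both $\tau_{\mathrm{aff}}$ and $\tau_\theta$.
For (a): because $f_i$ is an isomorphism of (affine) apartments that fixes $P_i$ pointwise, the affine topology on $P_i$ viewed in $A$ equals the affine topology on $P_i$ viewed in $A_i$; and Lemma~\ref{lemDistances restricted to apartments containing s}~(\ref{itDtheta norme}) applied to $A_i$ identifies the latter with $d_\theta|_{P_i\times P_i}$. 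For (b): in $\tau_{\mathrm{aff}}$ the set $P_i$ is closed by construction (a finite intersection of half-apartments). In $\tau_\theta$, the retraction $\rho_{A_i,\s}$ is $1$-Lipschitz by Lemma~\ref{lemDistances restricted to apartments containing s}~(\ref{itRetraction fixing s 1Lipschitz continuous}), so Lemma~\ref{lemApparts fermes quand une retraction est continue} forces $A_i$ to be $d_\theta$-closed in $\I$; and inside $A_i$ the set $P_i$ is closed (affinely, hence, by (a), for $d_\theta$ as well).

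Once (a) and (b) are established, equality of the two topologies on $A$ follows by a pigeonhole–subsequence argument. Both $\tau_{\mathrm{aff}}$ and $\tau_\theta$ are first-countable (each is metrisable), so it suffices to compare convergent sequences. Given $(x_n)$ converging to $x$ in one of the topologies, pass to a subsequence lying entirely in a single $P_i$ (possible because the cover is finite); closedness of $P_i$ in that topology forces $x\in P_i$; and on $P_i$ the two topologies coincide by (a), so the subsequence converges to $x$ in the other topology. The standard ``every subsequence has a convergent sub-subsequence with the same limit'' criterion upgrades this to full convergence, giving $\tau_\theta=\tau_{\mathrm{aff}}$.

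The only genuine difficulty is assertion (b): without a mechanism to guarantee that each $P_i$ is $d_\theta$-closed, the pigeonhole step would be powerless to locate the limit, and this is precisely where the $1$-Lipschitz property of retractions centered at $\s$ (Lemma~\ref{lemDistances restricted to apartments containing s}) must be invoked through Lemma~\ref{lemApparts fermes quand une retraction est continue}. Every other step is either affine linear algebra or a routine application of Proposition~\ref{lemDecoupages des apparts}.
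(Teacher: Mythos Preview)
Your argument is correct, but it is a considerable detour compared with the paper's proof. You invoke Theorem~\ref{thmEquivalence des distances} to say that any single $\theta\in\Theta_+$ will do, yet you then fix an arbitrary $\s$ and spend the rest of the proof handling the case $\s\notin A$ via the splitting of Proposition~\ref{lemDecoupages des apparts} and a sequential gluing argument. The paper instead \emph{uses} that freedom: every apartment $A$ contains a positive sector-germ $\mathfrak{t}$, so one simply takes $\theta=(|\ .\ |,\mathfrak{t})$ and applies Lemma~\ref{lemDistances restricted to apartments containing s}~(\ref{itDtheta norme}) directly. The entire proof is then two lines, and neither Proposition~\ref{lemDecoupages des apparts} nor Lemma~\ref{lemApparts fermes quand une retraction est continue} nor any pigeonhole reasoning is needed. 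Your route would be the natural one if Theorem~\ref{thmEquivalence des distances} were unavailable and one had to work with a fixed exterior sector-germ; as things stand, it is sound but redundant.
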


\begin{proof}
By Theorem~\ref{thmEquivalence des distances}, this topology is induced by $d_{(|\ .\ |,\mathfrak{t})}$ for  some positive sector-germ $\mathfrak{t}$ of $A$. Then Lemma~\ref{lemDistances restricted to apartments containing s}~(\ref{itDtheta norme}) concludes the proof. $\square$
\end{proof}

\begin{cor}\label{corRetractions lipschitziennes}
 Let $\rho$ be a retraction centered at a positive sector-germ, $A=\rho(\I)$, $B$ be an apartment and  $d_A$ (resp. $d_B$) be a distance on $A$ (resp. $B$) induced by a norm. Then: \begin{enumerate}

\item\label{itRetraction globally lipschitz} for   all $\theta\in \Theta_+$, $\rho:(\I,d_\theta)\rightarrow (A,d_A)$ is Lipschitz,

\item\label{itRetraction restricted}  the map $\rho_{|B}:(B,d_B)\rightarrow (A,d_A)$ is Lipschitz.
\end{enumerate} 
\end{cor}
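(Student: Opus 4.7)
The plan is to handle the two parts separately, relying on Lemma~\ref{lemDistances restricted to apartments containing s} and Theorem~\ref{thmEquivalence des distances} for (\ref{itRetraction globally lipschitz}), and on the decomposition Proposition~\ref{lemDecoupages des apparts} for (\ref{itRetraction restricted}). For (\ref{itRetraction globally lipschitz}), let $\s_0$ denote the positive sector-germ centering $\rho$ and set $\theta_0=(|\ .\ |_0,\s_0)\in\Theta_+$ for an arbitrary norm $|\ .\ |_0$ on $\A$. By Lemma~\ref{lemDistances restricted to apartments containing s}~(\ref{itRetraction fixing s 1Lipschitz continuous}), $\rho:(\I,d_{\theta_0})\to(A,d_{\theta_0|A^2})$ is $1$-Lipschitz; by Lemma~\ref{lemDistances restricted to apartments containing s}~(\ref{itDtheta norme}), $d_{\theta_0|A^2}$ is induced by a norm on the finite-dimensional apartment $A$, hence equivalent to $d_A$; and Theorem~\ref{thmEquivalence des distances} gives that $d_\theta$ is equivalent to $d_{\theta_0}$ on $\I$ for every $\theta\in\Theta_+$. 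Composing these three Lipschitz comparisons yields (\ref{itRetraction globally lipschitz}).

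For (\ref{itRetraction restricted}), I would apply Proposition~\ref{lemDecoupages des apparts} to the apartment $B$ and the sector-germ $\s_0$: write $B=\bigcup_{i=1}^kP_i$, where each $P_i$ is closed convex in $B$ and contained in an apartment $A_i$ that also contains $\s_0$, together with an isomorphism $f_i:A_i\overset{P_i}{\to}B$. Since $A_i$ contains $\s_0$, there is a unique isomorphism $\psi_i:A_i\overset{\s_0}{\to}A$, and by the definition of the retraction recalled in Subsection~\ref{subsecRetractions} one has $\rho_{|A_i}=\psi_i$. For $x\in P_i$, since $f_i(x)=x$, this gives $\rho(x)=\psi_i\circ f_i^{-1}(x)$, so $\rho_{|P_i}$ is the restriction to $P_i$ of the affine isomorphism $\psi_i\circ f_i^{-1}:B\to A$. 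As $(B,d_B)$ and $(A,d_A)$ are finite-dimensional normed affine spaces, this affine map has some Lipschitz constant $L_i$; set $L=\max_{1\leq i\leq k}L_i$.

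To globalize this piecewise bound, fix $x,y\in B$ and consider the segment $[x,y]_B$. Each intersection $[x,y]_B\cap P_i$ is closed and convex, hence a (possibly empty) closed subsegment of $[x,y]_B$; since $B=\bigcup_i P_i$, these finitely many subsegments cover $[x,y]_B$, yielding a subdivision $x=x_1,\ldots,x_m=y$ along the segment with each $[x_j,x_{j+1}]$ contained in some $P_{i(j)}$. The triangle inequality then gives
\[
d_A(\rho(x),\rho(y))\leq\sum_{j=1}^{m-1}d_A(\rho(x_j),\rho(x_{j+1}))\leq L\sum_{j=1}^{m-1}d_B(x_j,x_{j+1})=L\cdot d_B(x,y),
\]
where the last equality uses additivity of the norm-induced distance $d_B$ along collinear points $x_j\in[x,y]_B$. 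The main difficulty lies precisely in this globalization step: the local identification $\rho_{|P_i}=\psi_i\circ f_i^{-1}|_{P_i}$ is an immediate consequence of Proposition~\ref{lemDecoupages des apparts}, but patching the local Lipschitz constants into a global one forces us to exploit both the convexity of the $P_i$ and the fact that $d_B$ is norm-induced, so that the piecewise bounds telescope cleanly along the segment.
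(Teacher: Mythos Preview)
Your proof of (\ref{itRetraction globally lipschitz}) is essentially identical to the paper's. For (\ref{itRetraction restricted}), your argument via Proposition~\ref{lemDecoupages des apparts} and the segment subdivision is correct, but it takes a longer route than the paper. The paper simply observes that (\ref{itRetraction restricted}) is a consequence of (\ref{itRetraction globally lipschitz}): since (\ref{itRetraction globally lipschitz}) holds for \emph{every} $\theta\in\Theta_+$, one may choose $\theta'=(|\ .\ |,\mathfrak t')$ with $\mathfrak t'$ a positive sector-germ of $B$; then by Lemma~\ref{lemDistances restricted to apartments containing s}~(\ref{itDtheta norme}), $d_{\theta'|B^2}$ is induced by a norm on $B$, hence equivalent to $d_B$, so the inclusion $(B,d_B)\hookrightarrow(\I,d_{\theta'})$ is Lipschitz. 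Composing with the Lipschitz map $\rho:(\I,d_{\theta'})\to(A,d_A)$ from (\ref{itRetraction globally lipschitz}) gives the result. Your decomposition approach has the virtue of being more self-contained --- it does not invoke Theorem~\ref{thmEquivalence des distances} a second time --- and it makes the piecewise-affine nature of $\rho_{|B}$ explicit; but the paper's route is shorter and shows that (\ref{itRetraction restricted}) is really a special case of (\ref{itRetraction globally lipschitz}) rather than requiring new geometric input.
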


\begin{proof}
By Theorem~\ref{thmEquivalence des distances} we may assume $\theta=(|\ .\ |,\mathfrak{t})$, where $\mathfrak{t}$ is the center of $\rho$. Then by Lemma~\ref{lemDistances restricted to apartments containing s}~(\ref{itRetraction fixing s 1Lipschitz continuous}), $\rho:(\I,d_\theta)\rightarrow (A,d_\theta)$ is Lipschitz  and  Lemma~\ref{lemDistances restricted to apartments containing s}~(\ref{itDtheta norme}) completes the proof. $\square$
\end{proof}

\begin{cor}\label{corIntersection de deux apparts}
Let $A,B$ be two apartments of $\I$. Then $A\cap B$ is a closed subset of $A$ (seen as an affine space).
\end{cor}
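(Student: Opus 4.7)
The plan is to show that every $x\in A$ which is a limit (in the affine topology of $A$) of some sequence $(x_n)_{n\in\N}\in (A\cap B)^{\N}$ already lies in $B$. So fix such an $x$ and such a sequence, and aim to prove $x\in B$.

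The main tool will be a retraction onto $B$. I would fix any positive sector-germ $\mathfrak{t}$ contained in $B$ (for instance, the image of $+\infty$ under some isomorphism $\A\to B$) and set $\rho=\rho_{B,\mathfrak{t}}:\I\to B$. For any $y\in B$, the apartment $B$ itself contains $y$ and $\mathfrak{t}$ and the isomorphism $B\overset{\mathfrak{t}}{\to} B$ fixing $\mathfrak{t}$ is the identity, so $\rho$ fixes $B$ pointwise; in particular $\rho(x_n)=x_n$ for every $n$. On the other hand, Corollary~\ref{corRetractions lipschitziennes}(\ref{itRetraction restricted}) asserts that $\rho_{|A}:A\to B$ is Lipschitz continuous when $A$ and $B$ are equipped with norm-induced (i.e.\ affine) distances, so $\rho(x_n)\to\rho(x)$ in $B$. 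Combining these, $x_n\to\rho(x)$ in the affine topology of $B$.

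At this point I would have $x_n\to x$ in the affine topology of $A$ and $x_n\to\rho(x)$ in the affine topology of $B$, and I need to conclude $x=\rho(x)$. To compare the two convergences I would equip $\I$ with the positive topology $\mathscr{T}_+$ constructed in Section~\ref{secConstruction of signed distances}. By Corollary~\ref{corTopologie induite sur les appartements}, $\mathscr{T}_+$ restricts to the affine topology on each apartment, so both convergences lift to convergences of $(x_n)$ in the metrizable, hence Hausdorff, space $(\I,\mathscr{T}_+)$. Uniqueness of limits then gives $x=\rho(x)$, and since $\rho(x)\in B$ we obtain $x\in A\cap B$.

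The one delicate point is bridging between the a priori incomparable affine topologies on $A$ and on $B$; this is exactly what the existence of the masure-wide topology $\mathscr{T}_+$ provides, and it is why the corollary is placed right after the construction and equivalence of distances of positive type.
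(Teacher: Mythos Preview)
Your argument is correct and rests on the same two ingredients as the paper's proof: continuity of retractions for $\mathscr{T}_+$ (Corollary~\ref{corRetractions lipschitziennes}) and the fact that $\mathscr{T}_+$ restricts to the affine topology on each apartment (Corollary~\ref{corTopologie induite sur les appartements}). The paper packages the argument slightly differently: it invokes Lemma~\ref{lemApparts fermes quand une retraction est continue} to conclude directly that every apartment is closed in $(\I,\mathscr{T}_+)$, whence $A\cap B$ is closed in $\mathscr{T}_+$ and therefore in the induced (affine) topology on $A$. Your sequential argument with the retraction $\rho_{B,\mathfrak{t}}$ is essentially an inline re-derivation of that lemma for the particular sequence at hand; it works, but citing the lemma is shorter and avoids the detour through Corollary~\ref{corRetractions lipschitziennes}(\ref{itRetraction restricted}).
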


\begin{proof}
By Lemma~\ref{lemApparts fermes quand une retraction est continue}, $A$ and $B$ are closed for $\mathscr{T}_+$ and thus $A\cap B$ is closed for $\mathscr{T}_+$. Consequently it is closed for the topology induced by $\mathscr{T}_+$ on $A$, and Corollary~\ref{corTopologie induite sur les appartements} completes the proof. $\square$
\end{proof}

\begin{remark}\label{rqueConsequence geometrique de la non completude}
Suppose that $\I$ is not a building. Then by Proposition~\ref{propMasure non complete dans le cas denombrable}, for all $\theta_+\in \Theta_+$,  $(\I,d_{\theta_+})$ is incomplete.

 Let $\mathfrak{s}''$ be a positive sector-germ of $\I$, $\theta_+=(|\ .\ |,\mathfrak{s}'')$ and $(S_n)$ be an increasing sequence of sectors with the germ $\s''$. One says that $(S_n)$ is converging if there exists a retraction onto an apartment $\rho:\I\overset{\s''}{\rightarrow} \rho(\I)$ such that $(\rho(x_n))$ converges, where $x_n$ is the base point of $S_n$ for all $n\in \N$ and we call \textit{limit} of $(S_n)$ the set $\bigcup_{n\in\N}S_n$. One can show that the incompleteness of $(\I,d_\theta)$ implies the existence of a converging sequence of the direction $\s''$ whose limit is not a sector of $\I$. To prove this one can associate to each Cauchy sequence $(x_n)$ a sequence $(x_n')$ such that $d_{\theta_+}(x_n',x_n)\rightarrow 0$ and such that $x_n'+\s''\subset x'_{n+1}+\s''$ for all $n\in \N$. Then we show that $(x'_n)$ converges in $(\I,d_\theta)$ if, and only if the limit of $(x'_n+\s'')$ is a sector of $\I$.
\end{remark}

\subsection{Study of the action of $G$}

In this subsection, we show that  for every $g\in G$, the induced map $g:\I\rightarrow \I$ is Lipschitz for the distances of positive type.

\begin{lemma}\label{lemAction of G on addition}
Let $g\in G$ and $\s$ be a sector-germ of $\I$. Then for every $x\in \I$ and $u\in \overline{C^v_f}$, $g.(x+_\s u)=g.x+_{g.\s} u$.
\end{lemma}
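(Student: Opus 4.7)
My plan is to unwind Definition/Proposition~\ref{defn/prop Addition} and use that $g$ acts by isomorphisms between apartments (in particular, by vectorially Weyl isomorphisms, which preserves the sign of a sector-germ).

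First I would pick an apartment $A_1$ containing $x+\s$ and an isomorphism $\psi_1:\A\to A_1$ with $\psi_1(\mathrm{sgn}(\s)\infty)=\s$. By the strong transitivity assumption and the paragraph of the paper where all isomorphisms are declared to be vectorially Weyl, the map $g:A_1\to g.A_1$ is a vectorially Weyl isomorphism of apartments, and its image $g.A_1$ contains $g.(x+\s)=g.x+g.\s$. Since the vectorial part of $g_{|A_1}$ lies in $W^v$, and $W^v$ preserves the Tits cone $\T$ (hence sends positive vectorial chambers to positive ones and negative to negative), one has $\mathrm{sgn}(g.\s)=\mathrm{sgn}(\s)$.

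Next I would set $\psi_1'=g\circ \psi_1:\A\to g.A_1$. This is an isomorphism (composition of isomorphisms) and satisfies
\[
\psi_1'(\mathrm{sgn}(g.\s)\infty)=\psi_1'(\mathrm{sgn}(\s)\infty)=g.\psi_1(\mathrm{sgn}(\s)\infty)=g.\s.
\]
Then by Definition/Proposition~\ref{defn/prop Addition} applied in $g.A_1$ with the isomorphism $\psi_1'$, we compute
\[
g.x+_{g.\s}u=\psi_1'\bigl((\psi_1')^{-1}(g.x)+\mathrm{sgn}(g.\s)u\bigr)=g.\psi_1\bigl(\psi_1^{-1}(x)+\mathrm{sgn}(\s)u\bigr)=g.(x+_\s u),
\]
where in the last equality we used again Definition/Proposition~\ref{defn/prop Addition}, this time in $A_1$ with the isomorphism $\psi_1$.

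The only genuinely non-formal point is the sign-preservation step, so the main thing to state carefully is that the ``isomorphisms'' under consideration are vectorially Weyl (as emphasized just after axiom (MAO)), and hence preserve the sign of sector-germs; everything else is a direct substitution into the definition. Since $+_\s u$ is well defined (independent of the auxiliary choices), the computation above is complete and yields the claim.
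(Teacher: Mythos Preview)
Your proof is correct and is essentially the same as the paper's: both pick an apartment $A_1$ containing $x+\s$, transport the Weyl isomorphism $\psi_1:\A\to A_1$ to $g\circ\psi_1:\A\to g.A_1$, and unwind Definition/Proposition~\ref{defn/prop Addition} on each side. The only difference is that the paper works under the running assumption that $\s$ is positive (and hence treats $g.\s$ as positive without comment), whereas you make the sign-preservation step explicit via the vectorially-Weyl property of $g_{|A_1}$; this is a harmless and arguably clarifying addition.
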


\begin{proof}
Let $x\in \I$ and $u\in \overline{C^v_f}$. Let $A$ be an apartment containing $x+\s$. Let $A'=g.A$. Then $A'$ contains $\s'=g.\s$. Let $\psi:\A\rightarrow A$ be an isomorphism such that $\psi(+\infty)=\s$. Let $f:A\rightarrow A'$ be the isomorphism induced by $g$. Set $\psi'=f\circ \psi$. Then $\psi'(+\infty)=\s'$. 

As $x+_\s u\in A$, \[g.(x+_\s u)=f(x+_\s u)=f\circ\psi\big(\psi^{-1}(x)+u\big)=\psi'\big(\psi^{-1}\circ f^{-1}\big(f(x)\big)+u\big)=g.x+_{g.\s} u.\] $\square$
\end{proof}

\begin{theorem}\label{thmAction de G lipschitzienne}
Let $g\in G$ and $\theta\in \Theta_+$. Then $g:(\I,d_\theta)\rightarrow (\I,d_\theta)$ is Lipschitz.
\end{theorem}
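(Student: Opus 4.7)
The plan is to exploit the equivariance lemma (Lemma~\ref{lemAction of G on addition}) together with the equivalence theorem for positive-type distances (Theorem~\ref{thmEquivalence des distances}). The key observation is that, while $g$ need not preserve the particular sector-germ $\s$ appearing in $\theta$, it carries $\s$ to another positive sector-germ $g.\s$ (here one uses that isomorphisms induced by $G$ are vectorially Weyl and $W^v$ stabilises the Tits cone $\mathcal{T}$, so $g$ sends positive sector-germs to positive sector-germs).

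Write $\theta=(|\,.\,|,\s)$ and set $\theta'=(|\,.\,|,g.\s)\in \Theta_+$. First I would show the clean relation
\[
d_{\theta'}(g.x,g.x')\;\leq\; d_\theta(x,x')\qquad \forall x,x'\in \I.
\]
To see this, take $(u,u')\in U_\s(x,x')$, so that $x+_\s u = x'+_\s u'$. Applying $g$ and using Lemma~\ref{lemAction of G on addition} twice yields
\[
g.x +_{g.\s} u \;=\; g.(x+_\s u) \;=\; g.(x'+_\s u')\;=\; g.x' +_{g.\s} u',
\]
hence $(u,u')\in U_{g.\s}(g.x,g.x')$. Taking the infimum of $|u|+|u'|$ over $U_\s(x,x')$ gives the claimed inequality.

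Now by Theorem~\ref{thmEquivalence des distances}, the two distances $d_\theta$ and $d_{\theta'}$ are equivalent on $\I$: there exists $k\in \R_{>0}$ with $d_\theta\leq k\, d_{\theta'}$. Combining this with the inequality above,
\[
d_\theta(g.x,g.x')\;\leq\; k\, d_{\theta'}(g.x,g.x')\;\leq\; k\, d_\theta(x,x'),
\]
so $g:(\I,d_\theta)\to(\I,d_\theta)$ is $k$-Lipschitz continuous.

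There is no real obstacle here: all the substantive work (equivariance of the translation maps $+_\s$, and the equivalence of all positive-type distances) has already been carried out. The only small point requiring attention is verifying that $g.\s$ remains a positive sector-germ so that $\theta'$ genuinely lies in $\Theta_+$, which is why Theorem~\ref{thmEquivalence des distances} applies. Everything else is a direct computation.
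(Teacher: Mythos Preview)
Your proof is correct and is essentially identical to the paper's: both set $\theta'=(|\,.\,|,g.\s)$, use Lemma~\ref{lemAction of G on addition} to get the inclusion $U_\s(x,x')\subset U_{g.\s}(g.x,g.x')$ and hence $d_{\theta'}(g.x,g.x')\leq d_\theta(x,x')$, and then invoke Theorem~\ref{thmEquivalence des distances} to pass from $d_{\theta'}$ back to $d_\theta$. Your version spells out the equivariance computation and the point that $g.\s$ stays positive a bit more explicitly, but the argument is the same.
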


\begin{proof}
Write $\theta=(|\ .\ |,\s)$. Let $\theta'=(|\ .\ |,g.\s)$. By Theorem~\ref{thmEquivalence des distances}, it suffices to prove that $g:(\I,d_\theta)\rightarrow (\I,d_{\theta'})$ is Lipschitz.

Let $x,x'\in \I$. By Lemma~\ref{lemAction of G on addition}, $U_{g.\s}(g.x,g.x')\supset U_\s(x,x')$, thus $d_{\theta'}(g.x,g.x')\leq d_\theta(x,x')$, which proves the theorem. $\square$
\end{proof}

\subsection{Case of a building}
In this subsection we  assume that $\I$ is a building. We show that the distances of positive type are equivalent to the usual distance.

Let $d_\A$ be a distance on $\A$ induced by some $W^v$-invariant euclidean norm $|\ .\  |$ on $\A$. Let $x,x'\in \I$, $A$ be an apartment containing $x,x'$ and $f:A\rightarrow \A$ be an isomorphism of apartments. One sets $d_\I(x,x')=d_\A\big(f(x),f(x')\big)$. Then $d_\I:\I\rightarrow \R_+$ is well defined and is a distance on $\I$ (see \cite{brown1989buildings} VI.3 for example). 
Recall that $\rho_{+\infty}:\I\overset{+\infty}{\rightarrow}\A$.

\begin{prop}\label{propEquivalence des distances pour un immeuble}
Let $\theta\in \Theta_+$. Then $d_\I$ and $d_\theta$ are equivalent.
\end{prop}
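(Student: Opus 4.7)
The plan is to invoke Theorem~\ref{thmEquivalence des distances} to reduce to the case $\theta=(|\ .\ |,+\infty)$, where $|\ .\ |$ is the $W^v$-invariant Euclidean norm used to define $d_\I$, and then prove the two inequalities $d_\I\leq d_\theta\leq K d_\I$ separately.

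The inequality $d_\I\leq d_\theta$ should be essentially immediate. Given $(u,u')\in U_{+\infty}(x,x')$, set $y=x+_{+\infty}u=x'+_{+\infty}u'$. Choose an apartment $A_x\supset x+(+\infty)$ and a Weyl isomorphism $\psi_x:\A\rightarrow A_x$ with $\psi_x(+\infty)=+\infty$; by Definition/Proposition~\ref{defn/prop Addition} one has $\psi_x^{-1}(y)=\psi_x^{-1}(x)+u$, so $d_\I(x,y)=|u|$, and symmetrically $d_\I(y,x')=|u'|$. The triangle inequality and passage to the infimum over $U_{+\infty}(x,x')$ then give $d_\I\leq d_\theta$.

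For the reverse inequality $d_\theta\leq K d_\I$, I would first establish a decomposition lemma in $\A$: the function $\nu(v)=\inf\{|u|+|u'|\mid u,u'\in\overline{C^v_f},\ u-u'=v\}$ is a well-defined norm on $\A$. Well-definedness uses that $W^v$ is finite: fixing $\xi\in C^v_f$, the pair $u'=t\xi$, $u=v+t\xi$ is admissible for $t$ large enough, and the norm axioms are routine. Since $\nu$ and $|\ .\ |$ are both norms on the finite-dimensional space $\A$, $\nu\leq C|\ .\ |$ for some $C>0$. Applied to an apartment $A$ containing $+\infty$ via a Weyl isomorphism $\psi:\A\rightarrow A$ with $\psi(+\infty)=+\infty$, this immediately yields a pair $(u,u')\in U_{+\infty}(x,x')$ with $|u|+|u'|\leq Cd_\I(x,x')$ for all $x,x'\in A$, and hence $d_\theta(x,x')\leq Cd_\I(x,x')$.

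To extend this bound to arbitrary $x,x'\in\I$, I would use the fact that in a building any two points lie in a common apartment $A$, and then apply Corollary~\ref{corSegments germes de quartiers} with $\s=+\infty$ to decompose $[x,x']_A=\bigcup_{i=1}^{k-1}[x_i,x_{i+1}]_A$ so that each $[x_i,x_{i+1}]_A\cup\{+\infty\}$ lies in some apartment $A_i$. The previous step applied inside each $A_i$ gives $d_\theta(x_i,x_{i+1})\leq Cd_\I(x_i,x_{i+1})$; summing via the triangle inequality and using that the $x_i$ lie collinearly on $[x,x']_A$ so that $\sum_i d_\I(x_i,x_{i+1})=d_\I(x,x')$ produces $d_\theta(x,x')\leq Cd_\I(x,x')$. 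The main technical point is the decomposition lemma in $\A$; the global reduction is a direct assembly of material already developed, principally Corollary~\ref{corSegments germes de quartiers}.
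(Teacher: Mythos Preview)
Your proof is correct and follows essentially the same route as the paper's. Both reduce via Theorem~\ref{thmEquivalence des distances}, and both obtain $d_\theta\leq K d_\I$ by invoking Corollary~\ref{corSegments germes de quartiers} to break $[x,x']_A$ into subsegments each lying in an apartment containing $+\infty$, then using norm equivalence on such apartments. Two minor remarks: your ``decomposition lemma'' (that $\nu$ is a norm on $\A$) is exactly Lemma~\ref{lemDistances restricted to apartments containing s}~(\ref{itDtheta norme}), so you could simply cite it rather than reprove it; and for the inequality $d_\I\leq d_\theta$ your direct triangle-inequality argument through $y=x+_{+\infty}u$ is slightly cleaner than the paper's, which instead combines norm equivalence on $\A$ with Lemma~\ref{lemCaracterisation des fonctions lipschitziennes} to globalize, yielding $d_\I\leq \frac{1}{k}d_\theta$ for some $k>0$ rather than your sharp constant $1$.
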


\begin{proof}
By Theorem~\ref{thmEquivalence des distances}, one can assume that $\theta=(|\ .\  |,+\infty)$. Let $k,\ell\in \mathbb{R}_{>0}$ be such that $kd_{\I}|_{\A^2}\leq d_{\theta}|_{\A^2}\leq \ell d_{\I}|_{\A^2}$, which exists by Lemma~\ref{lemDistances restricted to apartments containing s}~(\ref{itDtheta norme}). Let us first show that $\Id:(\I,d_\theta)\rightarrow (\I,d_\I)$ is $\frac{1}{k}$-Lipschitz.

 Let $A$ be an apartment containing $+\infty$. Let $x,x'\in A$. Then by Lemma~\ref{lemDistances restricted to apartments containing s}~(\ref{itIso_fixing s continuous}) and the fact that the restriction of $\rho_{+\infty}$ to $A$ is an isometry for $d_\I$, $d_\theta(x,x')=d_\theta(\rho_{+\infty}(x),\rho_{+\infty}(x'))\geq  k d_\I(\rho_{+\infty}(x),\rho_{+\infty}(x'))=k d_\I(x,x')$. From  Lemma~\ref{lemCaracterisation des fonctions lipschitziennes} we deduce that $\Id:(\I,d_\theta)\rightarrow (\I,d_\I)$ is $\frac{1}{k}$-Lipschitz.

Let $x,x'\in \I$.  By Corollary~\ref{corSegments germes de quartiers} there exist $n\in \N_{>0}$ and $x_0=x,x_1,\ldots,x_n=x'\in [x,x']$ such that $[x,x']=\bigcup_{i=0}^{n-1} [x_i,x_{i+1}]$ and such that $[x_i,x_{i+1}]\cup {+\infty}$ is contained in an apartment for all $i\in \llbracket 0,n-1\rrbracket$.  By Lemma~\ref{lemDistances restricted to apartments containing s}~(\ref{itIso_fixing s continuous}),
\[\begin{aligned}d_\theta(x,x')\leq \sum_{i=0}^{n-1}d_\theta(x_i,x_{i+1})&=\sum_{i=0}^{n-1}d_\theta(\rho_{+\infty}(x_i),\rho_{+\infty}(x_{i+1}))\\ &\leq \ell\sum_{i=0}^{n-1}d_\I(\rho_{+\infty}(x_i),\rho_{+\infty}(x_{i+1})) \\  &=\ell\sum_{i=0}^{n-1}d_\I(x_i,x_{i+1})=\ell d_\I(x,x'),\end{aligned}\]
which proves the proposition. $\square$
\end{proof}

\section{Mixed distances}\label{secDistances combinees}
In this section, we begin by proving that unless $\I$ is a building,  if  $\s_-$ is a negative sector-germ, then every retraction centered at $\s_-$ is not continuous for $\mathscr{T}_+$ see Subsection~\ref{subsecNon egalite de T+ et T-}).  To prove this we show that the set of vertices $\I_0$ contains no isolated points when $\I$ is not a building and then we apply finiteness results of \cite{hebertGK}. 

 This implies that $\mathscr{T}_+\neq \mathscr{T}_-$ and motivates the introduction of mixed distances, each of which is the sum of a distance of positive type with a distance of negative type. We then study them.

\subsection{Comparison of positive and negative topologies}\label{subsecNon egalite de T+ et T-}

Fix a norm $|\ .\ |$ on $\A$.

\begin{prop}\label{propNon discretion de I0}
Let  $\theta\in \Theta$. Then $\I_0$ is discrete in $(\I,d_\theta)$ if and only if $\I$ is a building.
\end{prop}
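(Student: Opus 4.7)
The plan is to prove the two implications separately; the forward one is routine, the converse requires construction.

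\textbf{Forward direction (building $\Rightarrow$ discrete).} Assume $\I$ is a building. Proposition~\ref{propEquivalence des distances pour un immeuble} gives that $d_\theta$ is equivalent to the building distance $d_\I$, so it suffices to show $\I_0$ is discrete for $d_\I$. Since $G$ acts on $(\I,d_\I)$ by isometries and transitively on $\I_0=G.0$, I reduce to showing $0$ is isolated. Given $v\in\I_0\setminus\{0\}$, the building property yields an apartment $A$ containing both; by (MA2) and strong transitivity there is $g\in G$ with $g.0=0$ and $g.\A=A$, and then $d_\I(0,v)=d_\I(0,g^{-1}.v)$ with $g^{-1}.v\in\I_0\cap\A=Y$ by Lemma~\ref{lemIntersection de I0 et A}, so $d_\I(0,v)\geq\min_{y\in Y\setminus\{0\}}|y|>0$, uniformly in $v$.

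\textbf{Converse (not a building $\Rightarrow$ not discrete).} Suppose $\I$ is not a building, so $W^v$ is infinite. By Theorem~\ref{thmEquivalence des distances} and the symmetry between $\Theta_+$ and $\Theta_-$, I may assume $\theta=(|\cdot|,+\infty)\in\Theta_+$. Since $G$ acts by homeomorphisms (Theorem~\ref{thmAction de G lipschitzienne}) and transitively on $\I_0$, it is enough to exhibit, for each $\epsilon>0$, a point $0'\in\I_0\setminus\{0\}$ with $d_\theta(0,0')<2\epsilon$. The construction combines the density of walls with the branching lemma. Inspecting the proof of Proposition~\ref{propNon existence d'une distance W-invariante}(\ref{itemMurs denses}), I pick $u\in C^v_f$ with $|u|<\epsilon$ and $\alpha\in\Phi^+$ such that $\alpha(u)\geq 1$. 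Consider the true half-apartment $D=\{x\in\A:\alpha(x)\geq 1\}$ bounded by $M(\alpha,-1)$. The key geometric point is that $\alpha\geq 0$ on $\overline{C^v_f}$ (because $\alpha\in\Phi^+$), so $u+\overline{C^v_f}\subset D$; hence $D$ contains a sub-sector of $C^v_f$. Lemma~\ref{lemApparts branchant partout} produces an apartment $B$ with $\A\cap B=D$, and this $B$ consequently contains the sector-germ $+\infty$. By Subsection~\ref{subsecRetractions}, there is a unique Weyl isomorphism $\psi:\A\to B$ fixing $D$, and by strong transitivity $\psi$ is induced by some $g\in G$. Set $0':=\psi(0)=g.0\in\I_0$; since $0\in\A\setminus D$ and $\psi$ restricts to the identity on $D$, one has $0'\in B\setminus\A$, so $0'\neq 0$.

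\textbf{Final estimate and main obstacle.} It remains to see $d_\theta(0,0')\leq 2|u|$. Because $\psi$ fixes $u+\overline{C^v_f}$ pointwise, it sends the sector-germ $+\infty$ of $\A$ to the sector-germ $+\infty$ of $B$, and because $u\in D$ the direct computation
\[0+_{+\infty} u=u,\qquad 0'+_{+\infty} u=\psi(\psi^{-1}(0')+u)=\psi(u)=u\]
shows $(u,u)\in U_{+\infty}(0,0')$, whence $d_\theta(0,0')\leq 2|u|<2\epsilon$. Letting $\epsilon\to 0$ produces a sequence in $\I_0\setminus\{0\}$ converging to $0$ in $d_\theta$, completing the proof. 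The step I expect to be most delicate is the coordination between the two inputs: density of walls wants a root $\alpha$ with $\alpha(u)\geq 1$ for an arbitrarily small $u\in C^v_f$, while the branching construction requires the half-apartment $\{\alpha\geq 1\}$ still to contain a sub-sector of $C^v_f$ (so that $B\ni+\infty$). Both are reconciled by insisting $\alpha\in\Phi^+$, which forces $\alpha\geq 0$ on $\overline{C^v_f}$ and is automatically compatible with the proof of Proposition~\ref{propNon existence d'une distance W-invariante}(\ref{itemMurs denses}).
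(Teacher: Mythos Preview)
Your proof is correct and follows essentially the same approach as the paper's. Both directions proceed the same way: the forward direction reduces to discreteness of $Y$ in $\A$ via the building distance, and the converse combines Proposition~\ref{propNon existence d'une distance W-invariante}(\ref{itemMurs denses}) with Lemma~\ref{lemApparts branchant partout} to branch off a new apartment at a wall close to~$0$ and transport~$0$ there. The only cosmetic difference is in the final distance estimate: the paper picks a point $x$ on the wall and invokes the isometry property of Lemma~\ref{lemDistances restricted to apartments containing s}(\ref{itIso_fixing s continuous}) to get $d_\theta(0',x)=d_\theta(0,x)$ and then the triangle inequality, whereas you exhibit the pair $(u,u)\in U_{+\infty}(0,0')$ directly; your version is slightly more explicit but amounts to the same computation.
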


\begin{proof}
 Assume that $\I$ is a building. By Proposition~\ref{propEquivalence des distances pour un immeuble}, we can replace $d_\theta$ by the usual distance $d_\I$ on $\I$. By Lemma~\ref{lemIntersection de I0 et A}, $\I_0\cap \A=Y$, which is a lattice in $\A$. Let $\eta>0$ be such that for all $\lambda,\lambda'\in Y$, $d_\I(\lambda,\lambda') < \eta$ implies $\lambda=\lambda'$. Let $\lambda,\lambda'\in \I_0$ be such that $d_\I(\lambda,\lambda')<\eta$. Let $A$ be an apartment of $\I$ containing $\lambda$ and $\lambda'$ and $g\in G$ be such that $g.A=\A$. Then $d_\I(g.\lambda,g.\lambda')=d_\I(\lambda,\lambda')<\eta$, thus $\lambda=\lambda'$ and hence $\I_0$ is discrete in $\I$.

 Assume now that $\I$ is not a building and, thus, $W^v$ is infinite. By Theorem~\ref{thmEquivalence des distances}, we can assume that $\theta=(|\ .\ |,+\infty)$. Let $\epsilon >0$. Let us show that there exists $\lambda\in \I_0$ such that $d_\theta(\lambda,0)<2\epsilon$ and $\lambda\neq 0$. Let $M_0$ be a  wall of $\A$ containing $0$ such that for all consecutive walls $M_1$ and $M_2$ of the direction $M_0$, $d_\theta(M_1,M_2)<\epsilon$ (such a direction exists by Proposition~\ref{propNon existence d'une distance W-invariante}~(\ref{itemMurs denses})). Let $M$ be a wall such that $d_\theta(0,M)<\epsilon$ and such that $0\notin D$, where $D$ is the half-apartment of $\A$ delimited by $M$ and containing $+\infty$. By  Lemma~\ref{lemApparts branchant partout}, there exists an apartment $A$ such that $A\cap \A=D$.  Let $\phi:\A\overset{A\cap \A}{\rightarrow} A$ and $\mu=\phi(0)$. Let $x\in M$ be such that $d_\theta(0,x)<\epsilon$. Then by Lemma~\ref{lemDistances restricted to apartments containing s}~(\ref{itIso_fixing s continuous}), $d_\theta(\lambda,x)=d_\theta(0,x)$ and thus $d(\lambda,0)<2\epsilon$. As $\lambda\notin \A$, $\lambda\neq 0$ and we get the proposition. $\square$
\end{proof}

\begin{remark}\label{rquePoints limites de I0}
In fact, by Theorem~\ref{thmAction de G lipschitzienne} and since $G$ acts transitively on $\I_0$, we  proved that when $\I$ is not a building,  every point of $\I_0$ is a limit point.
\end{remark}

If $B$ is an apartment and $(x_n)\in B^\N$, one says that $(x_n)$ diverges to $\infty$, if for some isomorphism $f:B\rightarrow \A$, $|f(x_n)|\rightarrow +\infty$.

\begin{prop}\label{propNon egalite de T+ et T-}
 Assume that $\I$ is not a building. Let  $\s_{-}$ be a negative sector-germ of $\I$ and $\theta\in \Theta_+$. Equip $\I$ with $d_\theta$. Let $\rho_{-}$ be a retraction centered at $\s_{-}$ and $(\lambda_n)\in \I_0^\N$ be an injective and converging sequence. Then $\rho_-(\lambda_n)\rightarrow\infty$ in $\rho_-(\I)$. In particular $\rho_-$ is not continuous.
\end{prop}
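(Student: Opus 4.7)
The plan is to argue by contradiction: if some subsequence of $(\rho_-(\lambda_n))$ stayed bounded in $A_-:=\rho_-(\I)$, then the injectivity of $(\lambda_n)$ would force an infinite ``double fibre'' to exist, contradicting the finiteness theorem already invoked in Lemma~\ref{I0 denombrable}. Write $\theta=(|\,.\,|,\s_+)$ with $\s_+$ a positive sector-germ, and fix a retraction $\rho_+:\I\overset{\s_+}{\rightarrow} A_+$ onto an apartment $A_+$ containing $\s_+$.

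First I would control $\rho_+(\lambda_n)$. By Corollary~\ref{corRetractions lipschitziennes}~(\ref{itRetraction globally lipschitz}), $\rho_+$ is Lipschitz from $(\I,d_\theta)$ to $A_+$, and by Corollary~\ref{corTopologie induite sur les appartements} the topology induced on $A_+$ by $d_\theta$ is the affine topology. Hence $(\rho_+(\lambda_n))$ converges in $A_+$ in its affine topology. Writing $A_+=g.\A$ for some $g\in G$ and using $G$-invariance of $\I_0=G.0$, Lemma~\ref{lemIntersection de I0 et A} yields $\I_0\cap A_+=g\cdot Y$, which is a $\Z$-lattice in $A_+$. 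A convergent sequence in a discrete set is eventually constant, so after extraction $\rho_+(\lambda_n)=\lambda$ for all large $n$.

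Next, I would suppose for contradiction that $\rho_-(\lambda_n)\not\to\infty$ in $A_-$. After extraction, some subsequence is bounded in $A_-$, and the identical discreteness of $\I_0\cap A_-$ (again a translate of the lattice $Y$) lets me extract a further subsequence along which $\rho_-(\lambda_n)=\mu$ is constant. Then the (still infinite, still injective) extracted sequence lies in the double fibre $\rho_+^{-1}(\{\lambda\})\cap\rho_-^{-1}(\{\mu\})$. By the finiteness used in the proof of Lemma~\ref{I0 denombrable} (Theorem~5.6 of \cite{hebertGK}), this intersection is finite, which is the desired contradiction.

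The main obstacle is extending the finiteness statement of Theorem~5.6 of \cite{hebertGK} from the standard retractions $(\rho_{+\infty},\rho_{-\infty})$ to the general pair $(\rho_+,\rho_-)$ centered at arbitrary positive and negative sector-germs. I would reduce to the standard case using strong transitivity of $G$: picking $g\in G$ with $g.\A=A_+$ and $g\cdot(+\infty)=\s_+$ gives $\rho_+=g\circ\rho_{+\infty}\circ g^{-1}$, which rewrites the double-fibre count as a similar count involving $\rho_{+\infty}$ and a conjugate of $\rho_-$; a second $G$-equivariance argument (or a direct reading of the proof of Theorem~5.6 in the more general configuration) then completes the reduction.
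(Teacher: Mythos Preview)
Your overall strategy---eventually constant $\rho_+(\lambda_n)$, then contradiction via the finiteness of double fibres---is the same as the paper's. The difference is that the paper avoids your ``main obstacle'' entirely: rather than keeping the sector-germ $\s_+$ dictated by $\theta$, it invokes Theorem~\ref{thmEquivalence des distances} to replace $\theta$ by $(|\,.\,|,\s_+')$ where $\s_+'$ is the positive sector-germ of $A_-$ \emph{opposite} to $\s_-$. Then both retractions land in the same apartment and are centred at an opposite pair, which is exactly the configuration to which Theorem~5.6 of \cite{hebertGK} applies (and the only configuration in which the paper ever uses it).

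Your proposed workaround via $G$-equivariance does not go through as sketched. Conjugating by $g$ with $g\cdot(+\infty)=\s_+$ reduces $\rho_+$ to $\rho_{+\infty}$, but it sends $\rho_-$ to a retraction centred at $g^{-1}\cdot\s_-$, an arbitrary negative sector-germ. A ``second $G$-equivariance argument'' to turn this into $-\infty$ would require an element of $G$ fixing $+\infty$ and sending $g^{-1}\cdot\s_-$ to $-\infty$; but $G$ does not act transitively on pairs (positive sector-germ, negative sector-germ) unless they are opposite, so no such element need exist. The clean fix is the paper's: use the equivalence of positive-type distances to \emph{choose} $\s_+$ opposite to $\s_-$ from the start, after which your argument runs without obstacle and without any extraction (a convergent sequence in the lattice $\I_0\cap A_-$ is already eventually constant).
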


\begin{proof}
Let $A =\rho_{-}(\I)$ and $\s_+$ be the sector-germ of $A$ opposite to $\s_-$. Using Theorem~\ref{thmEquivalence des distances}, we may assume that $\theta=(|\ .\ |,\s_+)$. Let $\rho_+:\I\overset{\s_+}{\rightarrow} A$. Let $\lambda=\lim \lambda_n$ and $\mu=\rho_+(\lambda)$. Then by Corollary~\ref{corRetractions lipschitziennes}, $\rho_+(\lambda_n)\rightarrow \mu$. Let $Y_A=\I_0\cap A$. Then $Y_A$ is a lattice  in $A$ by Lemma~\ref{lemIntersection de I0 et A}. As $\rho_+(\lambda_n)\in Y_A$ for all $n\in \N$, $\rho_+(\lambda_n)=\mu$ for $n$ large enough.

 For all $n\in \N$, $\rho_-(\lambda_n)\in Y_A$. By Theorem 5.6 of \cite{hebertGK}, for all $\lambda'\in Y_A$, $\rho^{-1}_+(\{\lambda'\})\cap\rho_-^{-1}(\{\mu\})$ is finite, and the proposition follows. $\square$
\end{proof}

\begin{cor}\label{corNon egalite de T+ et T-}
If $\I$ is not a building, $\mathscr{T}_+$ and $\mathscr{T}_-$ are different.
\end{cor}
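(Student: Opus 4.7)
The plan is to deduce this almost immediately from Proposition~\ref{propNon egalite de T+ et T-}, since that proposition is precisely what is needed to separate the two topologies. The only small step is to note that a retraction centered at a negative sector-germ is continuous for $\mathscr{T}_-$, so that its failure of continuity for $\mathscr{T}_+$ (supplied by the proposition) rules out equality of the two topologies.

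More concretely, I would proceed as follows. Fix a negative sector-germ $\s_-$ of $\I$ and let $\rho_-$ be a retraction centered at $\s_-$. By the symmetric analog of Corollary~\ref{corRetractions lipschitziennes} (which holds for distances of negative type by exchanging the roles of $+$ and $-$ throughout Section~\ref{secConstruction of signed distances}), the map $\rho_-\colon (\I,d_{\theta_-})\to \rho_-(\I)$ is Lipschitz continuous for any $\theta_-\in \Theta_-$, and in particular continuous for $\mathscr{T}_-$. Now assume for contradiction that $\mathscr{T}_+=\mathscr{T}_-$. Then $\rho_-$ is also continuous for $\mathscr{T}_+$, i.e.\ continuous when $\I$ is equipped with $d_\theta$ for any $\theta\in \Theta_+$. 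But Proposition~\ref{propNon egalite de T+ et T-} asserts exactly the opposite, assuming $\I$ is not a building. This contradiction yields $\mathscr{T}_+\neq \mathscr{T}_-$.

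There is no real obstacle here: the hard work has already been done in establishing Proposition~\ref{propNon egalite de T+ et T-}, whose proof used the finiteness result from \cite{hebertGK} combined with the non-discreteness of $\I_0$ given by Proposition~\ref{propNon discretion de I0}. The only thing to double-check is that the $\pm$-symmetry invoked for Corollary~\ref{corRetractions lipschitziennes} is legitimate, which is clear since the constructions of $d_\theta$ for $\theta\in\Theta_+$ and $\theta\in\Theta_-$ are completely symmetric (they use $\pm\infty$ interchangeably and all intermediate lemmas—Lemma~\ref{lemDistances restricted to apartments containing s}, Theorem~\ref{thmEquivalence des distances}—were stated or noted to hold for both signs). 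Hence the corollary follows in a couple of lines.
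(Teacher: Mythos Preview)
Your proposal is correct and is precisely the argument the paper intends: the corollary is stated without proof because it follows immediately from Proposition~\ref{propNon egalite de T+ et T-} together with the (symmetric) continuity of $\rho_-$ for $\mathscr{T}_-$. Your justification of the $\pm$-symmetry via Corollary~\ref{corRetractions lipschitziennes} is exactly what is needed.
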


\begin{remark}\label{rqueConjecture}
Proposition~\ref{propNon egalite de T+ et T-} shows that if $\theta,\theta'\in \Theta$ have opposite signs, then  every open subset of $(\I,d_\theta)$ containing a point of $\I_0$ is unbounded with respect to $d_{\theta'}$. \end{remark}

\subsection{Mixed distances}\label{subSecDistances combinees}
In this section we define and study mixed distances.

 Let $\Xi=\Theta_+\times \Theta_-$. Let $\xi=(\theta_+,\theta_-)\in \Xi$. Set $d_{\xi}=d_{\theta_+}+d_{\theta_-}$.

\begin{theorem}\label{thmResume sur les Dtheta2}
Let $\xi\in \Xi$. We equip $\I$ with $d_\xi$. Then:
\begin{enumerate}
\item\label{itMixed_equivalence distances} For each $\xi'\in \Xi$, $d_\xi$ and $d_{\xi'}$ are equivalent.

\item\label{itMixed_action of G lipschitz } For each  $g\in G$, the induced map $g:\I\rightarrow \I$ is Lipschitz.

\item\label{itMixed_topology apartments} The topology induced on  every apartment is the affine topology.

\item\label{itMixed_retractions} Every retraction of $\I$ centered at a sector-germ is Lipschitz.

\item\label{itMixed I0 discrete} The set $\I_0$ is discrete.
\end{enumerate}
\end{theorem}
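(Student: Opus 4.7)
The strategy is to leverage the results on signed distances for each summand of $d_\xi=d_{\theta_+}+d_{\theta_-}$; a sum of two distances inherits many of the properties of each summand, so the bulk of the work has already been carried out.

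Items~\ref{itMixed_equivalence distances}, \ref{itMixed_action of G lipschitz } and~\ref{itMixed_topology apartments} are essentially immediate. For~\ref{itMixed_equivalence distances}, Theorem~\ref{thmEquivalence des distances} (and its negative analogue, which is valid by symmetry as indicated several times in the paper) provides positive constants $k_\pm,\ell_\pm$ with $k_\pm d_{\theta_\pm}\le d_{\theta'_\pm}\le \ell_\pm d_{\theta_\pm}$; summing yields the equivalence of $d_\xi$ and $d_{\xi'}$. For~\ref{itMixed_action of G lipschitz }, Theorem~\ref{thmAction de G lipschitzienne} and its negative counterpart give that $g$ is Lipschitz for both $d_{\theta_+}$ and $d_{\theta_-}$, so adding the two Lipschitz inequalities produces Lipschitz continuity for the sum. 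For~\ref{itMixed_topology apartments}, Corollary~\ref{corTopologie induite sur les appartements} (and its negative counterpart) shows that on any apartment $A$ each of $d_{\theta_\pm}|_{A^2}$ induces the affine topology, and the sum of two distances inducing the same topology on a finite-dimensional real affine space again induces it.

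The main technical step is~\ref{itMixed_retractions}. Let $\rho$ be a retraction onto $A:=\rho(\I)$ centered at a sector-germ $\s$, which I may take positive by symmetry; thanks to~\ref{itMixed_equivalence distances} I may further assume $\theta_+=(|\,.\,|,\s)$. Lemma~\ref{lemDistances restricted to apartments containing s}~\ref{itRetraction fixing s 1Lipschitz continuous} makes $\rho$ a $1$-Lipschitz map for $d_{\theta_+}$, so $d_{\theta_+}(\rho(x),\rho(y))\le d_{\theta_+}(x,y)\le d_\xi(x,y)$. For the negative contribution, observe that $A$, being an apartment, contains some negative sector-germ $\s''$; applying Theorem~\ref{thmEquivalence des distances} and Lemma~\ref{lemDistances restricted to apartments containing s}~\ref{itDtheta norme} to $(|\,.\,|,\s'')$ on $A$, I conclude that $d_{\theta_-}|_{A^2}$ is equivalent to some norm-induced distance $d_A$ on $A$. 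Corollary~\ref{corRetractions lipschitziennes}~\ref{itRetraction globally lipschitz} then furnishes $L>0$ with $d_A(\rho(x),\rho(y))\le L\,d_{\theta_+}(x,y)$, giving $d_{\theta_-}(\rho(x),\rho(y))\le L'\,d_{\theta_+}(x,y)\le L'\,d_\xi(x,y)$ for some $L'>0$. Summing the two estimates shows that $\rho$ is Lipschitz for $d_\xi$. This is where the mixture is genuinely needed, and the main obstacle: neither signed distance alone controls both components of $d_\xi(\rho(x),\rho(y))$, and the remedy is to exploit the norm-equivalence of the signed distances restricted to the target apartment.

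For~\ref{itMixed I0 discrete}, using~\ref{itMixed_equivalence distances} I may take $\xi=\bigl((|\,.\,|,+\infty),(|\,.\,|,-\infty)\bigr)$, so $d_\xi|_{\A^2}$ is a sum of two norms on $\A$ (Lemma~\ref{lemDistances restricted to apartments containing s}~\ref{itDtheta norme}) and hence a norm inducing the affine topology on $\A$. By~\ref{itMixed_retractions}, both $\rho_{+\infty}$ and $\rho_{-\infty}$ are Lipschitz from $(\I,d_\xi)$ onto $(\A,d_\xi|_{\A^2})$. Since $\I_0\cap\A=Y$ (Lemma~\ref{lemIntersection de I0 et A}) is an affine lattice in $\A$, there exists $\epsilon>0$ such that the only element of $Y$ at $d_\xi|_{\A^2}$-distance $<\epsilon$ from $0$ is $0$ itself. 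Lipschitz continuity then provides $\delta_0>0$ such that $d_\xi(\mu,0)<\delta_0$ forces $\rho_{\pm\infty}(\mu)$ into that $\epsilon$-ball, and when $\mu\in\I_0$ this forces $\rho_{\pm\infty}(\mu)=0$. Any such $\mu$ therefore lies in the finite set $\rho_{+\infty}^{-1}(\{0\})\cap\rho_{-\infty}^{-1}(\{0\})$ (finiteness by Theorem~5.6 of~\cite{hebertGK}, already invoked in Lemma~\ref{I0 denombrable}); shrinking $\delta_0$ to exclude the finitely many other points of this set isolates $0$. For a general $\lambda\in\I_0=G.0$, pick $g\in G$ with $g.0=\lambda$ and transport the neighborhood via $g$, which is bi-Lipschitz for $d_\xi$ by~\ref{itMixed_action of G lipschitz }.
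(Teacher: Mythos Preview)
Your proof is correct and follows essentially the same approach as the paper: items~\ref{itMixed_equivalence distances}--\ref{itMixed_retractions} are deduced from the corresponding results for signed distances (Theorem~\ref{thmEquivalence des distances}, Theorem~\ref{thmAction de G lipschitzienne}, Corollary~\ref{corTopologie induite sur les appartements}, Corollary~\ref{corRetractions lipschitziennes}), and item~\ref{itMixed I0 discrete} is obtained by combining the Lipschitz continuity of $\rho_{\pm\infty}$, the discreteness of $Y$ in $\A$, and the finiteness of $\rho_{+\infty}^{-1}(\{\lambda_+\})\cap\rho_{-\infty}^{-1}(\{\lambda_-\})$ from Theorem~5.6 of~\cite{hebertGK}. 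Your treatment of~\ref{itMixed_retractions} is in fact more explicit than the paper's one-line citation of Corollary~\ref{corRetractions lipschitziennes}: you spell out why the negative summand $d_{\theta_-}(\rho(x),\rho(y))$ is controlled (via the norm-equivalence of $d_{\theta_-}|_{A^2}$ on the target apartment), which is exactly the step needed to pass from Lipschitz continuity into $(A,d_A)$ to Lipschitz continuity into $(\I,d_\xi)$. The only cosmetic difference in~\ref{itMixed I0 discrete} is that the paper argues directly at an arbitrary $\lambda\in\I_0$, whereas you do it at $0$ and transport by $G$; both work.
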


\begin{proof}
The assertions~\ref{itMixed_equivalence distances} to~\ref{itMixed_retractions} are consequences of  Theorem~\ref{thmEquivalence des distances}, Theorem~\ref{thmAction de G lipschitzienne}, Corollary~\ref{corTopologie induite sur les appartements} and Corollary~\ref{corRetractions lipschitziennes}. Let us prove (\ref{itMixed I0 discrete}). Let $\lambda\in \I_0$ and set $\lambda_+=\rho_{+\infty}(\lambda)$ and $\lambda_-=\rho_{-\infty}(\lambda)$. By Theorem~5.6 of \cite{hebertGK}, $\rho_{+\infty}^{-1}(\{\lambda_+\})\cap \rho_{-\infty}^{-1}(\{\lambda_-\})$ is finite and thus there exists $r>0$ such that $B(\lambda,r)\cap \rho_{+\infty}^{-1}(\{\lambda_+\})\cap \rho_{-\infty}^{-1}(\{\lambda_-\})=\{\lambda\}$, where $B(\lambda,r)$ is the open ball of the radius $r$ and the center $\lambda$.  Let $k\in\R_{>0}$ be such that $\rho_{+\infty}$ and $\rho_{-\infty}$ are $k$-Lipschitz. Let $\eta>0$ be such that for all $\mu,\mu'\in Y$, $\mu\neq \mu'$ implies $d_\xi(\mu,\mu')\geq \eta$. Let $r'=\min (r,\frac{\eta}{k})$. Let us prove that $B(\lambda,r')\cap \I_0=\{\lambda\}$. Let $\mu\in B(\lambda,r')\cap \I_0$.  Suppose $\rho_{\sigma\infty}(\mu)\neq \lambda_\sigma$, for some $\sigma\in \{-,+\}$. 
Then \[kd_\xi(\mu,\lambda)\geq d_\xi(\rho_{\sigma\infty}(\mu),\rho_{\sigma\infty}(\lambda)) \geq \eta,\] thus $\lambda\notin B(\lambda,r')$, a contradiction. Therefore $\rho_{+\infty}(\mu)=\lambda_+$ and $\rho_{-\infty}(\mu)=\lambda_-$,  hence $\lambda=\mu$ by choice of $r$, which completes  the proof of the theorem. $\square$
\end{proof}

We denote by $\mathscr{T}_m$ the topology on $\I$ induced by any $d_\xi$, $\xi\in \Xi$.

\subsection{Link with the initial topology with respect  to the retractions}

In this subsection, we prove that the topology $\mathscr{T}_m$ agrees with the initial topology with respect to the family of retractions centered at sector-germs (see Corollary~\ref{corDescription de la topologie combinee}). To this end, for each $u\in C^v_f$ we introduce a map $T_u:\I\rightarrow \R_+$ which, for each $x\in \I$, measures the distance along the ray $x+(\R_+ u)_\infty$ between $x\in \I$ and $\A$. We then use the fact that for all $\lambda\in Y\cap C^v_f$, $T_\lambda\leq \ell (\rho_{+\infty}-\rho_{-\infty})$, for some $\ell\in \R_+$ (see Lemma~\ref{propMajoration of Tnu}).

Fix a norm $|\ .\ |$ on $\A$.

\subsubsection*{Definition of $y_u$ and $T_u$}  We now review briefly the results of the paragraph ``Definition of $y_\nu$ and $T_\nu$'' of Section~3 of \cite{hebertGK}. Let $u\in C^v_f$ and $\sigma\in \{-,+\}$. Let $\delta_+=\R_+ u\subset \A$ and $\delta_-=\R_- u\subset \A$. Then $\delta_+$ and $\delta_-$ are generic rays. Let $x\in \I$, then   there exists a unique $y_{\sigma u}(x)\in \A$ such that $x+\delta_{\sigma,\infty}\cap \A=y_{\sigma u}(x)+\sigma \R_+ u\subset \A$ and there exists a unique $T_{\sigma u}(x)\in \R_+$ such that \[y_{\sigma u}(x)=x+_{\sigma \infty}T_{\sigma u}(x).u =\rho_{\sigma \infty}(x)+\sigma T_{\sigma u}(x). u.\] 

Then for each $x\in \I$, $x\in \A$ if and only if $y_u(x)=x$ if and only if $T_u(x)=0$. 

\begin{lemma}\label{propMajoration of Tnu}
Let $\lambda\in Y\cap C^v_f$. Then there exists $\ell_{|\ .\ |}\in \R_{>0}$ such that for all $x\in \I$,  \[T_ \lambda(x),T_{- \lambda}(x)\leq \ell_{|\ .\ |} |\rho_{+\infty}(x)-\rho_{-\infty}(x)|.\] 
\end{lemma}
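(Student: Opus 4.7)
The strategy is to extract an explicit expression for $(T_\lambda(x)+T_{-\lambda}(x))\lambda$ inside $\A$ and then to bound it by $|\rho_{+\infty}(x)-\rho_{-\infty}(x)|$ via a positivity argument. From the two formulas recalled just before the statement, namely $y_\lambda(x)=\rho_{+\infty}(x)+T_\lambda(x)\lambda$ and $y_{-\lambda}(x)=\rho_{-\infty}(x)-T_{-\lambda}(x)\lambda$, subtracting yields the identity in $\A$
\[
(T_\lambda(x)+T_{-\lambda}(x))\,\lambda \;=\; \bigl(\rho_{-\infty}(x)-\rho_{+\infty}(x)\bigr) \;+\; \bigl(y_\lambda(x)-y_{-\lambda}(x)\bigr).
\]
Since $T_\lambda(x), T_{-\lambda}(x) \geq 0$, it suffices to upper-bound their sum by a multiple of $|\rho_{+\infty}(x)-\rho_{-\infty}(x)|$, and for this I must control the auxiliary term $y_\lambda(x)-y_{-\lambda}(x)$.

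My plan is to pair the above identity with a linear functional $\phi\in\A^*$ satisfying $\phi(\lambda)>0$; a natural choice is a simple root $\alpha_i$, for which $\alpha_i(\lambda)\geq 1$ since $\lambda\in Y\cap C^v_f$. This reduces the problem to proving an inequality of the form $\phi(y_\lambda(x)-y_{-\lambda}(x))\leq C\,|\rho_{-\infty}(x)-\rho_{+\infty}(x)|$. I would establish this via a Tits-preorder argument: the relations $y_{-\lambda}(x)\leq x\leq y_\lambda(x)$, which hold in apartments $A^-$ (containing $x$ and $-\infty$) and $A^+$ (containing $x$ and $+\infty$) respectively, together with axiom (MAO), should yield $y_\lambda(x)-y_{-\lambda}(x)\in \overline{\T}$; one expects a parallel statement $\rho_{-\infty}(x)-\rho_{+\infty}(x)\in \overline{\T}$ (of Hecke-path type). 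The key identity then expresses $(T_\lambda(x)+T_{-\lambda}(x))\lambda$ as a sum of two vectors of $\overline{\T}$, each of whose image under a fixed dominant functional $\phi$ is dominated by a multiple of its norm; dividing through by $\phi(\lambda)>0$ and using equivalence of norms on $\A$ yields the sought constant $\ell_{|\,.\,|}\in\R_{>0}$, from which the bound for each of $T_\lambda(x)$ and $T_{-\lambda}(x)$ individually follows by non-negativity.

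The main obstacle is precisely this positivity input. The Lipschitz estimates available from Corollary~\ref{corRetractions lipschitziennes} only produce inequalities of the shape $|y_\lambda(x)-y_{-\lambda}(x)|\leq L(T_\lambda(x)+T_{-\lambda}(x))|\lambda|$, which have the wrong sign for our purposes and only degenerate the key identity. Some genuinely geometric input specific to the masure is therefore required. If the direct Tits-preorder comparison above turns out to be delicate (the global preorder on $\I$ is not literally transitive), a plausible substitute is to invoke the splitting of apartments from Proposition~\ref{lemDecoupages des apparts} applied to $A^\pm$ with the sector-germ $\pm\infty$ and to proceed by induction on $d_{+\infty}(A^-)$ (and $d_{-\infty}(A^+)$), transporting the desired bound piece-by-piece from the case $A^\pm=\A$ (where the inequality holds trivially with $\ell=0$, since then $T_\lambda(x)=T_{-\lambda}(x)=0$) out to an arbitrary $x\in\I$.
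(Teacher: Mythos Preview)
Your proposal is not a complete proof, and you yourself flag the gap correctly. The paper does not fill that gap internally either: its proof is a one-line citation to Corollary~4.2 and Remark~4.3 of \cite{hebertGK}, which assert the existence of a linear form $h\in\A^*$ with $T_\lambda(x),\,T_{-\lambda}(x)\le h\bigl(\rho_{-\infty}(x)-\rho_{+\infty}(x)\bigr)$ for all $x\in\I$; boundedness of $h$ on the finite-dimensional space $\A$ then yields $\ell_{|\,.\,|}$ at once. So the ``genuinely geometric input specific to the masure'' you are looking for is exactly what is imported from \cite{hebertGK}.

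Two remarks on the sketch itself. First, the Tits preorder on $\I$ \emph{is} transitive (this is Th\'eor\`eme~5.9 of \cite{rousseau2011masures}, recalled in the paper just before Lemma~\ref{lemImage of a segment by rho}), so the chain $y_{-\lambda}(x)\le x\le y_\lambda(x)$ does give $y_\lambda(x)-y_{-\lambda}(x)\in\T$; your parenthetical caveat is unwarranted. Second, and this is the real issue, knowing that both summands in
\[
(T_\lambda(x)+T_{-\lambda}(x))\,\lambda=\bigl(\rho_{-\infty}(x)-\rho_{+\infty}(x)\bigr)+\bigl(y_\lambda(x)-y_{-\lambda}(x)\bigr)
\]
lie in $\T$ does not let you bound the left side by $|\rho_{-\infty}(x)-\rho_{+\infty}(x)|$ alone. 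Pairing with any $\phi\in\A^*$ with $\phi(\lambda)>0$ produces a bound involving $|y_\lambda(x)-y_{-\lambda}(x)|$ as well, and your own identity shows that this second norm differs from $|\rho_{-\infty}(x)-\rho_{+\infty}(x)|$ by exactly $(T_\lambda(x)+T_{-\lambda}(x))|\lambda|$, so the inequality collapses---this is the circularity you diagnose in your last paragraph. When $W^v$ is infinite there is no linear functional that is simultaneously positive on $\lambda$ and nonpositive on all of $\T$, so you cannot simply drop the second summand either. The alternative induction via Proposition~\ref{lemDecoupages des apparts} is plausible in spirit but would have to track how $T_{\pm\lambda}$ and $\rho_{\pm\infty}$ change across each wall-crossing with a uniform constant; as written it is only a hope, not an argument.
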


\begin{proof}
By Corollary~4.2 and Remark~4.3 of \cite{hebertGK}, there exists a linear map $h:\A\rightarrow \R$ such that $T_ \lambda(x),T_{- \lambda}(x)\leq h\big(\rho_{-\infty}(x)-\rho_{+\infty}(x)\big)$ for all $x\in \I$, which proves the existence of $\ell_{|\ .\ |}$.
 $\square$
\end{proof}

\begin{lemma}\label{lemLien entre distance et retractions}
Let $\xi\in \Xi$ and $a\in \I$. Let $A$ be an apartment  containing $a$.  Let $\s_-,\s_+$ be two opposite sector-germs of $A$ and $\rho_+:\I\overset{\s_+}{\rightarrow} A$, $\rho_-:\I\overset{\s_-}{\rightarrow}A$. Then there exists $k\in \R_{>0}$ such that for all $x\in \I$, $d_\xi(a,x)\leq k\big(d_\xi\big(a,\rho_-(x)\big)+d_\xi\big(a,\rho_+(x)\big)\big)$. 
\end{lemma}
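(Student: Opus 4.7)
The plan is to reduce to the standard situation $A = \A$, $\s_\pm = \pm\infty$, and then to relate $x$ to its ``projections'' $y_{\pm\lambda}(x) \in \A$ by a short path in the sector-germ direction, controlled by Lemma~\ref{propMajoration of Tnu}.

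First, by Theorem~\ref{thmResume sur les Dtheta2}~(\ref{itMixed_equivalence distances}) I may replace $\xi$ by any convenient $\xi' \in \Xi$ (this only multiplies $k$ by an equivalence constant). Since $G$ acts strongly transitively, there exists $g \in G$ with $g.A = \A$ and $g.\s_\sigma = \sigma\infty$ for $\sigma \in \{+,-\}$; the conjugate $g \circ \rho_\sigma \circ g^{-1}$ is a retraction of $\I$ onto $\A$ fixing $\sigma\infty$, hence by uniqueness equals $\rho_{\sigma\infty}$. Using Theorem~\ref{thmResume sur les Dtheta2}~(\ref{itMixed_action of G lipschitz }), both $g$ and $g^{-1}$ are Lipschitz continuous, so it suffices to prove the inequality in the case $A = \A$, $\s_\sigma = \sigma\infty$, $\rho_\sigma = \rho_{\sigma\infty}$, and $\xi = (\theta_+, \theta_-)$ with $\theta_\sigma = (|\,.\,|, \sigma\infty)$ for some fixed norm $|\,.\,|$ on $\A$.

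Now fix $\lambda \in Y \cap C^v_f$ and, for $\sigma \in \{+,-\}$ and $x \in \I$, consider the maps $y_{\sigma\lambda}: \I \to \A$ and $T_{\sigma\lambda}: \I \to \R_+$ from Subsection~5.3, which satisfy
\[
y_{\sigma\lambda}(x) \;=\; x +_{\sigma\infty} T_{\sigma\lambda}(x)\lambda \;=\; \rho_{\sigma\infty}(x) + \sigma T_{\sigma\lambda}(x)\lambda.
\]
The first equality shows $(T_{\sigma\lambda}(x)\lambda, 0) \in U_{\sigma\infty}\big(x, y_{\sigma\lambda}(x)\big)$, so $d_{\theta_\sigma}\big(x, y_{\sigma\lambda}(x)\big) \leq T_{\sigma\lambda}(x) |\lambda|$. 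The second equality, combined with Lemma~\ref{lemDistances restricted to apartments containing s}~(\ref{itDtheta norme}) (ensuring $d_{\theta_\sigma|\A^2}$ comes from a norm), gives $d_{\theta_\sigma}\big(\rho_{\sigma\infty}(x), y_{\sigma\lambda}(x)\big) \leq C_1 T_{\sigma\lambda}(x)$ for a uniform constant $C_1$. Two applications of the triangle inequality, followed by Lemma~\ref{propMajoration of Tnu}, then yield
\[
d_{\theta_\sigma}(a, x) \;\leq\; d_{\theta_\sigma}\big(a, \rho_{\sigma\infty}(x)\big) + C_2 T_{\sigma\lambda}(x) \;\leq\; d_{\theta_\sigma}\big(a, \rho_{\sigma\infty}(x)\big) + C_2 \ell_{|\,.\,|}\, \big|\rho_{+\infty}(x) - \rho_{-\infty}(x)\big|.
\]
Finally, writing $|\rho_{+\infty}(x) - \rho_{-\infty}(x)| \leq |\rho_{+\infty}(x) - a| + |a - \rho_{-\infty}(x)|$ and comparing $|\,.\,|$ with $d_{\xi|\A^2}$ on $\A$ via Lemma~\ref{lemDistances restricted to apartments containing s}~(\ref{itDtheta norme}) bounds the right side by $C_3 \big(d_\xi(a, \rho_{+\infty}(x)) + d_\xi(a, \rho_{-\infty}(x))\big)$. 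Summing over $\sigma \in \{+,-\}$ gives the desired inequality for $d_\xi(a,x) = d_{\theta_+}(a,x) + d_{\theta_-}(a,x)$.

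There is no real obstacle in the proof: it is essentially bookkeeping that chains the triangle inequality with the two key quantitative inputs, namely the norm-structure of $d_{\theta_\sigma|\A^2}$ and the linear control of $T_{\pm\lambda}(x)$ by $|\rho_{+\infty}(x) - \rho_{-\infty}(x)|$. The only subtlety is the initial reduction to $(A, \s_\pm) = (\A, \pm\infty)$, where one must verify both that $g \rho_\sigma g^{-1} = \rho_{\sigma\infty}$ (from uniqueness of a retraction onto a prescribed apartment centered at a prescribed sector-germ) and that the Lipschitz bound of Theorem~\ref{thmResume sur les Dtheta2}~(\ref{itMixed_action of G lipschitz }) is absorbed into the final constant $k$.
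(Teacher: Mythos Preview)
Your proof is correct and follows essentially the same approach as the paper's: reduce to $(A,\s_\pm)=(\A,\pm\infty)$, bound $d_{\theta_\sigma}(x,\rho_{\sigma\infty}(x))$ by a constant times $T_{\sigma\lambda}(x)$ via the identity $x+_{\sigma\infty}T_{\sigma\lambda}(x)\lambda=\rho_{\sigma\infty}(x)+\sigma T_{\sigma\lambda}(x)\lambda$, invoke Lemma~\ref{propMajoration of Tnu}, and finish with the triangle inequality through $a$. The only cosmetic difference is that the paper bounds $d_{\theta_\sigma}(x,\rho_{\sigma\infty}(x))\le 2T_\sigma(x)|\lambda|$ in one step (observing that $(T_\sigma(x)\lambda,T_\sigma(x)\lambda)\in U_{\sigma\infty}(x,\rho_{\sigma\infty}(x))$), whereas you pass through the intermediate point $y_{\sigma\lambda}(x)$; and the paper's reduction is phrased as ``using isomorphisms of apartments'' rather than via the $G$-action, but these amount to the same thing.
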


\begin{proof}
Using isomorphisms of apartments, we may assume $A=\A$, $\s_+=+\infty$ and $\s_-=-\infty$.  By Theorem~\ref{thmResume sur les Dtheta2}~(\ref{itMixed_equivalence distances}) we may assume $\xi=\big((|\ .\ |,\s_+),(|\ .\ |,\s_-)\big)$.

Let $ \lambda\in C^v_f$. Let $T_+=T_ \lambda:\I\rightarrow \R_+$
   and $T_-=T_{- \lambda}:\I\rightarrow \R_+$.  By Lemma~\ref{propMajoration of Tnu} and Lemma~\ref{lemDistances restricted to apartments containing s}~(\ref{itDtheta norme}), there exists $\ell\in \R_{>0}$ such that $T_\sigma(x)\leq \ell d_\xi(\rho_-(x),\rho_+(x))$ for all $x\in \I$ and both $\sigma\in \{-,+\}$.
   
Set $d_+=d_{(|\ .\ |,+\infty)}$ and $d_-=d_{(|\ .\ |,-\infty)}$.  Let $x\in \I$ and $\sigma\in \{-,+\}$. One has $x+_{\sigma\infty}T_\sigma(x) u=\rho_{\sigma}(x)+\sigma T_\sigma(x) u$. Thus \[\begin{aligned} d_\sigma(\rho_\sigma(x),x)\leq 2T_\sigma(x)| u| &\leq 2 \ell | u|d_\sigma(\rho_-(x),\rho_+(x))\\ &\leq 2\ell | u|\big(d_\sigma(\rho_-(x),a)+d_\sigma(\rho_+(x),a)\big).\end{aligned}\]

     As $d_\sigma(a,x)\leq d\big(a,\rho_\sigma(x)\big)+d\big(\rho_\sigma(x),x\big)$ we deduce that \[d_\xi(a,x)=d_-(a,x)+d_+(a,x)\leq (4\ell | u|+2)\big(d_\xi(a,\rho_-(x))+d_\xi(a,\rho_+(x))\big).\ \square\]

\end{proof}

\begin{cor}\label{propParties bornees pour Tn}
Let $\xi\in \Xi$.  Equip $\I$ with  $d=d_\xi$. Then, for $X\subset \I$ the following assertions are equivalent:

\begin{enumerate}
\item\label{itBounded set} X is bounded.
\item\label{itBounded retracted sets} For every retraction $\rho$ centered at a sector-germ of $\I$, $\rho(X)$ is bounded.
\item\label{itBounded retracted sets for two retractions} There exist two opposite sector-germs $\s_+$ and $\s_-$ such that if $\rho_{\s_-}$ and $\rho_{\s_+}$ are retractions centered at $\s_-$ and $\s_+$, $\rho_{\s_-}(X)$ and $\rho_{\s_+}(X)$ are bounded.
\end{enumerate}

Moreover every bounded subset of $\I_0$ is finite.
\end{cor}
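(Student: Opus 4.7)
The plan is to prove the cycle $(\ref{itBounded set}) \Rightarrow (\ref{itBounded retracted sets}) \Rightarrow (\ref{itBounded retracted sets for two retractions}) \Rightarrow (\ref{itBounded set})$ and then handle the moreover part by combining boundedness of the retractions with the finiteness result already used in the proof of Theorem~\ref{thmResume sur les Dtheta2}.

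For $(\ref{itBounded set}) \Rightarrow (\ref{itBounded retracted sets})$, I would invoke Theorem~\ref{thmResume sur les Dtheta2}~(\ref{itMixed_retractions}): every retraction centered at a sector-germ is Lipschitz continuous for $d_\xi$, and Lipschitz maps send bounded sets to bounded sets. The implication $(\ref{itBounded retracted sets}) \Rightarrow (\ref{itBounded retracted sets for two retractions})$ is immediate upon choosing any sector-germ and its opposite (opposite sector-germs always exist in $\A$, for instance $+\infty$ and $-\infty$).

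The substantial implication is $(\ref{itBounded retracted sets for two retractions}) \Rightarrow (\ref{itBounded set})$, and here the work has already been done. By Lemma~\ref{lemGermes de quartiers opposés}, the two opposite sector-germs $\s_+$ and $\s_-$ are both contained in a common apartment $A$. Pick any $a\in A$; then Lemma~\ref{lemLien entre distance et retractions} provides a constant $k\in\R_{>0}$ such that
\[
d_\xi(a,x)\leq k\bigl(d_\xi(a,\rho_{\s_-}(x))+d_\xi(a,\rho_{\s_+}(x))\bigr)
\]
for every $x\in\I$. If $\rho_{\s_-}(X)$ and $\rho_{\s_+}(X)$ are bounded, then each term on the right is bounded uniformly in $x\in X$, whence $X$ itself is bounded.

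For the final assertion, let $X\subset\I_0$ be bounded. Applying the equivalence (\ref{itBounded set})$\Leftrightarrow$(\ref{itBounded retracted sets}) to the two retractions $\rho_{+\infty}$ and $\rho_{-\infty}$, the sets $\rho_{+\infty}(X)$ and $\rho_{-\infty}(X)$ are bounded; since they lie in $\I_0\cap\A=Y$ by Lemma~\ref{lemIntersection de I0 et A}, and $Y$ is a lattice in $\A$, both sets are finite. Writing
\[
X\subset\bigcup_{(\lambda,\mu)\in\rho_{-\infty}(X)\times\rho_{+\infty}(X)}\rho_{-\infty}^{-1}(\{\lambda\})\cap\rho_{+\infty}^{-1}(\{\mu\}),
\]
and invoking Theorem~5.6 of~\cite{hebertGK} (which guarantees finiteness of each fibre intersection, and was already used in Lemma~\ref{I0 denombrable} and in the proof of Theorem~\ref{thmResume sur les Dtheta2}~(\ref{itMixed I0 discrete})), we exhibit $X$ as contained in a finite union of finite sets, hence finite. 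The only step requiring any care is the $(\ref{itBounded retracted sets for two retractions}) \Rightarrow (\ref{itBounded set})$ direction, and its difficulty is already absorbed into Lemma~\ref{lemLien entre distance et retractions}.
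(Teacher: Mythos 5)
Your proof is correct and follows essentially the same approach as the paper: $(1)\Rightarrow(2)$ via Lipschitz continuity of retractions (Theorem~\ref{thmResume sur les Dtheta2}), $(2)\Rightarrow(3)$ trivially, $(3)\Rightarrow(1)$ via Lemma~\ref{lemLien entre distance et retractions}, and the finiteness of bounded subsets of $\I_0$ via Theorem~5.6 of \cite{hebertGK}. You spell out a couple of steps the paper leaves implicit — choosing $a$ in a common apartment containing the opposite sector-germs for the lemma, and explicitly writing $X$ as a finite union of finite fibre intersections — but the structure and the key ingredients are identical.
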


\begin{proof}
By Theorem~\ref{thmResume sur les Dtheta2}, (\ref{itBounded set}) implies (\ref{itBounded retracted sets}) which clearly  implies (\ref{itBounded retracted sets for two retractions}). The fact that (\ref{itBounded retracted sets for two retractions}) implies (\ref{itBounded set}) is a consequence of Lemma~\ref{lemLien entre distance et retractions}. The last assertion is a consequence of (\ref{itBounded retracted sets for two retractions}) and of Theorem 5.6 of \cite{hebertGK}. $\square$
\end{proof}

\begin{cor}\label{corDescription de la topologie combinee}
The sets $\rho_+^{-1}(V)\cap \rho_{-}^{-1}(V)$ such that $V$ is an open set of an apartment $A$ and $\rho_{-}$, $\rho_{+}$ are retractions onto $A$  centered at opposite sector-germs of $A$ form a basis of $\mathscr{T}_m$. In particular $\mathscr{T}_m$ is the initial topology with respect to the retractions centered at sector-germs.
\end{cor}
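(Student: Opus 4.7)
The plan is to verify directly that the proposed family $\mathcal{B}$ of subsets of the form $\rho_+^{-1}(V)\cap \rho_-^{-1}(V)$ (with $V$ an open subset of an apartment $A$ and $\rho_\pm$ retractions onto $A$ centered at opposite sector-germs of $A$) forms a basis of $\mathscr{T}_m$, and then deduce the statement on the initial topology.

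First I would check that every element of $\mathcal{B}$ is open in $\mathscr{T}_m$. This is immediate from Theorem~\ref{thmResume sur les Dtheta2}~(\ref{itMixed_retractions}): each retraction $\rho_\pm$ centered at a sector-germ is Lipschitz continuous for any $d_\xi$, so in particular continuous from $(\I,\mathscr{T}_m)$ to the apartment $A$ endowed with its affine topology, and preimages of open sets of $A$ are therefore $\mathscr{T}_m$-open.

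Next, for the converse direction, fix $\xi\in \Xi$, set $d=d_\xi$, take an open set $U$ in $(\I,d)$ and a point $a\in U$. Choose an apartment $A$ containing $a$ (using (MA3) or the fact that $\I$ is a union of apartments), and pick opposite sector-germs $\s_\pm$ of $A$ together with retractions $\rho_\pm:\I\overset{\s_\pm}{\rightarrow} A$. Let $r>0$ be such that the open ball $B(a,r)$ is contained in $U$. By Lemma~\ref{lemLien entre distance et retractions}, there exists $k>0$ such that for every $x\in \I$
\[
d(a,x)\leq k\bigl(d(a,\rho_-(x))+d(a,\rho_+(x))\bigr).
\]
Let $V=\{y\in A\mid d(a,y)<r/(2k)\}$, which is open in $A$ for its affine topology thanks to Theorem~\ref{thmResume sur les Dtheta2}~(\ref{itMixed_topology apartments}). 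Setting $W=\rho_+^{-1}(V)\cap \rho_-^{-1}(V)\in \mathcal{B}$, one has $a\in W$ since $\rho_\pm(a)=a\in V$, and for every $x\in W$ the displayed inequality gives $d(a,x)<r$, hence $W\subset B(a,r)\subset U$. This proves that $\mathcal{B}$ is a basis of $\mathscr{T}_m$.

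Finally, let $\mathscr{T}_{\mathrm{init}}$ denote the initial topology on $\I$ with respect to the family of all retractions centered at sector-germs. Since each retraction is $\mathscr{T}_m$-continuous, we have $\mathscr{T}_{\mathrm{init}}\subset \mathscr{T}_m$. Conversely, each element of $\mathcal{B}$ is a finite intersection of preimages of open sets under retractions, hence lies in $\mathscr{T}_{\mathrm{init}}$; as $\mathcal{B}$ is a basis of $\mathscr{T}_m$, we conclude $\mathscr{T}_m\subset \mathscr{T}_{\mathrm{init}}$, whence equality. The only mildly delicate point in the argument is that one has to invoke Lemma~\ref{lemLien entre distance et retractions} together with the description of the apartment topology as the affine topology, but both are already at our disposal, so no substantial new obstacle arises.
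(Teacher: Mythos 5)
Your proof is correct and follows exactly the route the paper intends: the paper's proof consists of the single line ``This is a consequence of Lemma~\ref{lemLien entre distance et retractions}'', and you supply precisely the missing details — openness of the candidate basis via Lipschitz continuity of retractions, the $r/(2k)$ radius argument using the inequality from that lemma together with the fact that retractions fix $a\in A$, the identification of the subspace topology on $A$ with the affine one, and the routine two-inclusion argument for the initial topology. Nothing to add.
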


\begin{proof}
This is a consequence of Lemma~\ref{lemLien entre distance et retractions}. $\square$
\end{proof}

\subsection{A continuity property for the map $+_{+\infty}$}

 The aim of this subsection is to prove the  theorem below, which will be useful to prove the contractibility of $\I$ for $\mathscr{T}_m$. To simplify the notation, we write $+$ instead of $+_{+\infty}$.

\begin{theorem}\label{lemContinuity of + for Tm}
Let $\xi\in \Xi$ and $u\in C^v_f$. Equip $\I$ with $d_\xi$. Then the map $\I\times \R_+\rightarrow \I $ defined by $(x,t)\mapsto x+tu$ is continuous.
\end{theorem}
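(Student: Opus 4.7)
Since $d_\xi=d_{\theta_+}+d_{\theta_-}$, continuity of $(x,t)\mapsto x+tu$ in $d_\xi$ amounts to continuity with respect to each of $d_{\theta_+}$ and $d_{\theta_-}$ separately, and these halves call for rather different arguments. By Theorem~\ref{thmEquivalence des distances} we may without loss of generality replace $\theta_+$ by $(|\cdot|,+\infty)$, so that the addition $+_{+\infty}$ is aligned with the direction of $d_{\theta_+}$; Lemma~\ref{lemLipschitziannite de l'addition} applied with $\s=+\infty$ then directly yields Lipschitz continuity of $(y,w)\mapsto y+_{+\infty}w$ from $\I\times\overline{C^v_f}$ (with $d_{\theta_+}\times|\cdot|$) to $(\I,d_{\theta_+})$, and composing with $t\mapsto tu$ settles the $d_{\theta_+}$ half.

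\textbf{Reduction via retractions.} For the $d_{\theta_-}$ half, the plan is to use Corollary~\ref{corDescription de la topologie combinee}, according to which $\mathscr{T}_m$ is the initial topology with respect to retractions centered at sector-germs. It then suffices to show that for every such retraction $\rho$ of $\I$ onto an apartment $A$, the composite $(x,t)\mapsto\rho(x+tu)$ is continuous from $(\I\times\R_+,d_\xi\times|\cdot|)$ into $A$ with its affine topology. For $\rho$ centered at a positive sector-germ, Corollary~\ref{corRetractions lipschitziennes} furnishes Lipschitz continuity of $\rho$ in $d_{\theta_+}$, which combined with the first paragraph disposes of this sub-case.

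\textbf{Main obstacle.} The remaining case, where $\rho$ is centered at a negative sector-germ $\s_-$, is the crux: such a $\rho$ is Lipschitz only for distances of negative type, so one cannot avoid genuinely controlling $d_{\theta_-}(x+tu,x_0+t_0u)$, which is essentially the problem we started with. To overcome this, the plan is to pick an apartment $A_0$ containing $y_0:=x_0+t_0u$ together with opposite sector-germs $\s'_\pm$ of $A_0$ and apply Lemma~\ref{lemLien entre distance et retractions}, reducing to convergence $\rho^\pm_{A_0}(x+tu)\to y_0$ in the affine topology of $A_0$. The $\rho^+$ half is again handled via Lemma~\ref{lemDistances restricted to apartments containing s}(\ref{itCompatibility retraction addition}). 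For $\rho^-$, one exploits the piecewise-affine structure of $\rho^-_{A_0}$ restricted to any apartment $A_y$ containing $y$ and $+\infty$ (obtained from Proposition~\ref{lemDecoupages des apparts} applied inside $A_y$ with sector-germ $\s'_-$), together with the genericity assumption $u\in C^v_f$: the ray $y+\R_+u$ eventually meets a piece of $A_y$ contained in some apartment having $\s'_-$, on which $d_{\theta_-}$ restricts (up to Lipschitz equivalence via Theorem~\ref{thmEquivalence des distances}) to a norm by Lemma~\ref{lemDistances restricted to apartments containing s}(\ref{itDtheta norme}). The hard part is combining these local estimates into a uniform convergence statement valid throughout a $d_\xi$-neighborhood of $(x_0,t_0)$, tracking the Lipschitz constants through the equivalences of Theorem~\ref{thmEquivalence des distances} so that the bounds do not depend on the apartment $A_y$ chosen for each $y$.
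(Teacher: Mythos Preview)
Your reduction via Corollary~\ref{corDescription de la topologie combinee} and Lemma~\ref{lemLien entre distance et retractions} is sound, and the $d_{\theta_+}$ half (equivalently, the case of retractions centered at positive sector-germs) is correctly disposed of by Lemma~\ref{lemLipschitziannite de l'addition} and Corollary~\ref{corRetractions lipschitziennes}. The paper follows essentially the same outline up to this point.

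The gap is in your treatment of the negative retraction. Your plan is to split an apartment $A_y$ (containing $y$ and $+\infty$) via Proposition~\ref{lemDecoupages des apparts} into finitely many convex pieces, each lying in an apartment containing $\s'_-$, and to patch the resulting affine maps. You correctly identify the obstacle yourself: the splitting depends on $A_y$, and both the number of pieces and the directions of the bounding walls vary with $y$; in particular $d_{\s'_-}(A_y)$ is not bounded as $y$ ranges over a $d_\xi$-neighborhood of $y_0$, so Proposition~\ref{lemDecoupages des apparts} gives no uniform control on the Lipschitz constants. Without an additional idea this does not close, and you have not supplied one.

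The paper takes a different route for this step and does not use apartment splitting at all. It separates the cases $t>0$ and $t=0$. For $t>0$, one conjugates by some $g\in G$ so that $x\in\A$ (using Lemma~\ref{lemAction of G on addition} and Theorem~\ref{thmResume sur les Dtheta2}(\ref{itMixed_action of G lipschitz })); then the continuity of $T_u$ (Lemma~\ref{lemLipschitziannite des T_nu et y_nu}) forces $x_n+t_nu\in\A$ for large $n$, and on $\A$ the $d_{\theta_+}$- and $d_\xi$-topologies coincide, so Lemma~\ref{lemLipschitziannite de l'addition} finishes. The genuinely delicate case is $t=0$, and here the key input is that $\rho_{-\infty}$ sends the segment from $x$ to $x+u$ to a $u$-\emph{path} in $\A$ (a weak form of the Hecke-path theorem of Gaussent--Rousseau). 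Since every tangent vector of a $u$-path lies in $W^v.u\subset u-Q^\vee_{\R_+}$, one obtains the monotonicity estimate
\[
|\rho_{-\infty}(x+tu)-\rho_{-\infty}(x)|_0\;\le\;(t+t')|u|_0+|\rho_{-\infty}(x+t'u)-\rho_{-\infty}(x)|_0\qquad(t\le t'),
\]
for a suitable norm $|\cdot|_0$. Taking $t'=T_u(x_n)+t_n$ (so that $x_n+t'u\in\A$) reduces the control of $\rho_{-\infty}(x_n+t_nu)$ to quantities already known to converge; one then concludes via Lemma~\ref{lemLien entre distance et retractions}. This $u$-path input is what replaces the uniformity you were unable to extract from the splitting argument.
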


To prove this theorem we prove that if a sequence $\big((x_n),(t_n)\big)\in (\I\times \R_+)^\N$ converges to some $(x,t)\in \I\times \R_+$, then $(x_n+t_nu)$ converges to $x+tu$. We first treat the case where $t\neq 0$. 

Fix $\xi\in \Xi$ and write $\xi=(\theta_+,\theta_-)$. Fix a norm $|\ .\ |$ on $\A$.

\begin{lemma}\label{lemLipschitziannite des T_nu et y_nu}
Let $u\in C^v_f$.  Then $T_u:\I\rightarrow \R_+$ and $y_u:\I\rightarrow (\A,|\ .\ |)$ are Lipschitz for $d_{\theta_+}$ and $d_\xi$.
\end{lemma}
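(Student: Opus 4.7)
The plan is as follows. Since $d_{\theta_+}\leq d_\xi$, Lipschitz continuity with respect to $d_{\theta_+}$ implies Lipschitz continuity with respect to $d_\xi$ with the same constant, so I focus on $d_{\theta_+}$, and by Theorem~\ref{thmEquivalence des distances} I may further assume $\theta_+=(|\ .\ |,+\infty)$. Once $T_u$ is shown to be Lipschitz, the identity $y_u(x)=\rho_{+\infty}(x)+T_u(x).u$ in $(\A,|\ .\ |)$, combined with the Lipschitz continuity of $\rho_{+\infty}$ (Corollary~\ref{corRetractions lipschitziennes}) and the triangle inequality, yields the Lipschitz continuity of $y_u$. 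The heart of the argument is therefore the Lipschitz continuity of $T_u$.

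By Lemma~\ref{lemCaracterisation des fonctions lipschitziennes} it suffices to prove that $T_u|_A$ is $k$-Lipschitz with $k$ independent of $A$, for every apartment $A$ containing $+\infty$. Fix such an $A$. In the affine structure of $A$, $T_u(x)=\inf\{t\geq 0\mid x+tu\in A\cap\A\}$. The crucial geometric input is that $y\in A\cap\A$ and $v\in \overline{C^v_f}$ imply $y+v\in A\cap\A$: since both $A$ and $\A$ contain $+\infty$, Proposition~\ref{defnProp faces}~(\ref{itAppart à l'infini}) applied in each apartment yields $y+C^v_f\subset A\cap\A$, and passing to closures (using closedness of $A\cap\A$ in $A$, Corollary~\ref{corIntersection de deux apparts}) gives $y+\overline{C^v_f}\subset A\cap\A$.

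Choose $\epsilon>0$ (depending only on $u$ and $|\ .\ |$) such that the closed $|\ .\ |$-ball $\overline{B}(u,\epsilon)$ is contained in $C^v_f$. For $x,x'\in A$ with $x\neq x'$, set $t=T_u(x)$ and $s=|x'-x|/\epsilon$; then $|(x'-x)/s|=\epsilon$, so $u+(x'-x)/s\in \overline{B}(u,\epsilon)\subset\overline{C^v_f}$, hence $s\bigl(u+(x'-x)/s\bigr)\in\overline{C^v_f}$, and applying the key fact to $y=x+tu\in A\cap\A$ gives
\begin{equation*}
x'+(t+s)u=(x+tu)+s\bigl(u+(x'-x)/s\bigr)\in A\cap\A,
\end{equation*}
so $T_u(x')\leq T_u(x)+|x'-x|/\epsilon$. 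By symmetry, $T_u|_A$ is $(1/\epsilon)$-Lipschitz with respect to $|\ .\ |$. Finally, by Lemma~\ref{lemDistances restricted to apartments containing s}~(\ref{itDtheta norme}), $d_{\theta_+}|_{A^2}$ is induced, through any isomorphism $\A\to A$ fixing $+\infty$, by the intrinsic norm $b\mapsto\inf\{|u_1|+|u_2|\mid u_1,u_2\in\overline{C^v_f},\ u_2-u_1=b\}$ on $\A$, which is independent of $A$ and equivalent to $|\ .\ |$ with universal constants; this gives a uniform Lipschitz bound for $T_u|_A$ with respect to $d_{\theta_+}|_{A^2}$, and Lemma~\ref{lemCaracterisation des fonctions lipschitziennes} concludes. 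The main obstacle is the uniformity of the Lipschitz constant across all apartments containing $+\infty$, which is secured by the two observations that $\overline{C^v_f}$ lies in the recession cone of every such $A\cap\A$ and that $d_{\theta_+}$ descends to the same intrinsic norm on each such apartment.
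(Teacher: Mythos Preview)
Your argument is correct. The core geometric idea is the same as the paper's---namely, that once $x+T_u(x)u$ lies in $\A$, one can absorb a displacement to $x'$ by adding a small extra multiple of $u$, because a closed ball around $u$ sits inside $C^v_f$---but the packaging differs. The paper works directly on $\I$: it picks an optimal pair $(v,v')\in U_{+\infty}(x,x')$ realizing $d_{\theta_+}(x,x')=|v|+|v'|$ (Lemma~\ref{rqueDistance atteinte}), notes that $x+T_u(x)u\in\A$ forces $x'+v'+T_u(x)u\in\A$, and then uses a constant $\ell$ with $\ell|a|u-a\in C^v_f$ to cancel $v'$ and obtain $x'+(T_u(x)+\ell|v'|)u\in\A$. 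Your constant $1/\epsilon$ is exactly this $\ell$. You instead reduce via Lemma~\ref{lemCaracterisation des fonctions lipschitziennes} to a single apartment $A\supset+\infty$ and exploit that $\overline{C^v_f}$ lies in the recession cone of $A\cap\A$; you then need the extra step of checking that the norm $|\psi^{-1}(\cdot)-\psi^{-1}(\cdot)|$ on $A$ compares uniformly with $d_{\theta_+}|_{A^2}$, which you do by identifying the latter with the intrinsic norm on $\A$. The paper's route is shorter because it bypasses Lemma~\ref{lemCaracterisation des fonctions lipschitziennes} and Corollary~\ref{corIntersection de deux apparts}; your route has the virtue of isolating the purely affine content (the recession-cone fact) from the masure combinatorics. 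One small notational point: when you write $|x'-x|$ and $u+(x'-x)/s$ for $x,x'\in A$, you are implicitly working through an isomorphism $\psi:\A\to A$ fixing $+\infty$; it would be cleaner to say this up front, though you do unwind it correctly at the end.
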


\begin{proof}
By Theorem~\ref{thmEquivalence des distances} and Theorem~\ref{thmResume sur les Dtheta2}, we can assume $\theta_+=(+\infty,|\ .\ |)$. Let $\ell\in \R_{>0}$ be such that for all $a\in \A$, $\ell |a| u-a\in C^v_f$. Let $x,x'\in \I$ and $(u,u')\in U_{+\infty}(x,x')$ be such that $d_{\theta_+}(x,x')= |u|+|u'|$, which exists by Lemma~\ref{rqueDistance atteinte}. Then $x+T_ u(x) u\in \A$ and thus $x'+u'+ T_ u(x) u=x+u+T_ u(x) u\in \A$. Therefore \[x'+u'+T_ u(x) u+(\ell |u'| u -u')=x'+(T_ u(x)+\ell |u'|) u \in \A.\] Hence $T_ u(x')\leq T_ u(x)+\ell |u'|\leq T_ u(x)+\ell  d_{\theta_+}(x,x')$. By symmetry we deduce that $T_ u$ is $\ell$-Lipschitz for $d_{\theta_+}$. The fact that $y_ u$ is Lipschitz for $d_{\theta_+}$ is a consequence of the continuity of the map $+$ (Lemma~\ref{lemLipschitziannite de l'addition}) and of the fact that $y_ u=\rho_{+\infty}+T_ u.  u$. As $d_{\theta_+}\leq d_\xi$, the lemma is proved. $\square$
\end{proof}

\begin{lemma}\label{lemContinuity of addition for Tm and tneq0}
 Let $u\in C^v_f$ and $(x_n,t_n)\in \I\times \R_{>0}$ be such that $x_n\rightarrow x$ for $d_\xi$ and $t_n\rightarrow t$, for some $x\in \I$ and $t\in \R_{>0}$. Then $x_n+t_n  u \rightarrow x+t u$ for $d_\xi$.
\end{lemma}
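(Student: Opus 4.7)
Since $d_\xi = d_{\theta_+} + d_{\theta_-}$, it suffices to show that $z_n := x_n + t_n u$ converges to $z := x + tu$ in both $d_{\theta_+}$ and $d_{\theta_-}$. For $d_{\theta_+}$, by Theorem~\ref{thmEquivalence des distances} I may assume $\theta_+ = (|\,.\,|,+\infty)$; then Lemma~\ref{lemLipschitziannite de l'addition} (applied to $\theta_+$, with $d' = d_{\theta_+|\A^2}$) asserts that $(y,v)\mapsto y+v$ is Lipschitz continuous, and combined with $x_n\to x$ in $d_{\theta_+}$ (using $d_{\theta_+}\leq d_\xi$) and $t_n u\to tu$ in $|\,.\,|$, this yields $z_n\to z$ in $d_{\theta_+}$.

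The hypothesis $t>0$ is used crucially for the $d_{\theta_-}$ convergence. By Lemma~\ref{lemLipschitziannite des T_nu et y_nu}, $T_u$ is Lipschitz continuous for $d_\xi$, so $T_u(x_n)\to T_u(x)$; combined with $t_n\to t>0$, the quantities $T_u(x_n)-t_n$ are uniformly bounded above. I would therefore choose $s>0$ exceeding both $\sup_n(T_u(x_n)-t_n)$ and $T_u(x)-t$, so that $z_n+su = x_n+(t_n+s)u$ and $z+su$ lie in $\A$ for all sufficiently large~$n$. Applying Lemma~\ref{lemDistances restricted to apartments containing s}~(\ref{itCompatibility retraction addition}) with $\s=+\infty$, and using that $\rho_{+\infty}$ fixes points of $\A$, I get the explicit expression $z_n + su = \rho_{+\infty}(x_n)+(t_n+s)u$ as a point in $\A$; since $\rho_{+\infty}$ is Lipschitz continuous for $d_\xi$ (Corollary~\ref{corRetractions lipschitziennes}), it follows that $z_n+su\to z+su$ in the affine topology of $\A$.

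To conclude $z_n\to z$ in $d_\xi$, I would appeal to Corollary~\ref{corDescription de la topologie combinee}, which identifies $\mathscr{T}_m$ with the initial topology generated by the retractions centered at sector-germs: it then suffices to show $\rho(z_n)\to\rho(z)$ in the target apartment for every such retraction~$\rho$. For $\rho$ centered at a positive sector-germ, Corollary~\ref{corRetractions lipschitziennes} yields Lipschitz continuity in $d_{\theta_+}$, and the conclusion follows from the first step. For $\rho$ centered at a negative sector-germ $\mathfrak{t}$, the continuity of $\rho|_\A$ (combining Corollary~\ref{corRetractions lipschitziennes} with the fact that $d_{\theta_-|\A^2}$ is a norm by Lemma~\ref{lemDistances restricted to apartments containing s}~(\ref{itDtheta norme})) gives $\rho(z_n+su)\to\rho(z+su)$. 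To transfer this to $\rho(z_n)\to\rho(z)$, I would pick an apartment $A_n$ containing $x_n + \overline{+\infty}$, so that $z_n,z_n+su\in A_n$, and decompose $A_n$ via Proposition~\ref{lemDecoupages des apparts} into finitely many closed convex pieces each contained in an apartment containing $\mathfrak{t}$, on which $\rho$ restricts to an affine isomorphism; combined with the Lipschitz continuity of $y_u$ and $T_u$ (Lemma~\ref{lemLipschitziannite des T_nu et y_nu}) this expresses $\rho(z_n)$ as a continuous function of $\rho(z_n+su)$, $y_u(x_n)$, $T_u(x_n)$ and $t_n$, whose limit is $\rho(z)$.

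The main obstacle is this last transfer step for retractions centered at negative sector-germs: the convergence $\rho(z_n+su)\to\rho(z+su)$ is painless inside $\A$, but converting it back to $\rho(z_n)\to\rho(z)$ requires a careful piecewise-affine analysis of $\rho$ on apartments $A_n$ that do not contain the center $\mathfrak{t}$, which is precisely what Proposition~\ref{lemDecoupages des apparts} enables once the continuity of $y_u$ and $T_u$ is available.
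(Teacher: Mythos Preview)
Your $d_{\theta_+}$-convergence argument and the reduction to $z_n+su\in\A$ are correct. The gap is the final transfer step for negative retractions. You assert that decomposing $A_n$ via Proposition~\ref{lemDecoupages des apparts} ``expresses $\rho(z_n)$ as a continuous function of $\rho(z_n+su)$, $y_u(x_n)$, $T_u(x_n)$ and $t_n$,'' but this is not justified: the decomposition of $A_n$ into convex pieces on which $\rho$ is affine depends on the apartment $A_n$, which varies with $n$; neither the number of pieces (bounded only by $2^{d_\mathfrak{t}(A_n)}$) nor the affine maps on them are determined by the data you list. More to the point, Proposition~\ref{propNon egalite de T+ et T-} shows that a negative retraction $\rho$ is \emph{not} continuous for $\mathscr{T}_+$, so knowing $z_n\to z$ in $d_{\theta_+}$ gives you nothing about $\rho(z_n)$; and you cannot invoke Lipschitz continuity of $\rho$ for $d_{\theta_-}$ either, since $d_{\theta_-}(z_n,z)\to 0$ is exactly what remains to be proved. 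The extra information $\rho(z_n+su)\to\rho(z+su)$ does not bridge this gap without uniform control on $\rho(z_n+su)-\rho(z_n)$, which your piecewise-affine sketch does not provide.

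The paper bypasses this via a group-action reduction. Choose $g\in G$ inducing an isomorphism $A\overset{+\infty}{\rightarrow}\A$ where $A$ contains $x$ and $+\infty$; then $g.x\in\A$, and Lemma~\ref{lemAction of G on addition} gives $g.(x_n+t_nu)=g.x_n+t_nu$. Since $g$ is $d_\xi$-Lipschitz (Theorem~\ref{thmResume sur les Dtheta2}~(\ref{itMixed_action of G lipschitz })), $g.x_n\to g.x\in\A$ in $d_\xi$, reducing to the case $x\in\A$. There $T_u(x_n)\to T_u(x)=0$, so $t_n>T_u(x_n)$ for large $n$ and hence $z_n\in\A$; then $d_{\theta_+}$-convergence on $\A$ coincides with affine convergence, hence with $d_\xi$-convergence on $\A$ (Corollary~\ref{corTopologie induite sur les appartements} and Theorem~\ref{thmResume sur les Dtheta2}~(\ref{itMixed_topology apartments})), and applying $g^{-1}$ concludes. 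Your shift by $su$ only lands $z_n+su$ in $\A$; the paper's trick lands a $G$-translate of $z_n$ itself in $\A$, which is what makes the $d_{\theta_-}$ side immediate.
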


\begin{proof}
 First assume $x\in \A$. By Lemma~\ref{lemLipschitziannite des T_nu et y_nu}, $T_ u(x_n)\rightarrow T_ u(x)=0$. Consequently, for $n\in \N$ large enough, $x_n+t_n u\in \A$. Write $\xi=(\theta_+,\theta_-)$. By the continuity of the map $+$ for $d_{\theta_+}$ (Lemma~\ref{lemLipschitziannite de l'addition}), $x_n+t_n  u\rightarrow x+t u$ for $d_{\theta_+}$. As the topologies induced by $d_{\theta_+}$ and $d_\xi$ on $\A$ agree with the topology induced by its structure of a finite-dimensional real vector-space (by Corollary~\ref{corTopologie induite sur les appartements} and Theorem~\ref{thmResume sur les Dtheta2}~(\ref{itMixed_topology apartments})), we deduce that $x_n+ u\rightarrow x+ u$ for $d_\xi$. 

We no longer assume that $x\in \A$. Let $A$ be an apartment containing $x+\infty$. Let $\phi:A\overset{+\infty}{\rightarrow}\A$. Let $g\in G$ be an automorphism inducing $\phi$. By Lemma~\ref{lemAction of G on addition}, $x_n+t_n u=g^{-1}.\big(g.(x_n+t_n  u)\big)=g^{-1}.(g.x_n+ t_n u)$ for all $n\in \N$. As $g.x\in \A$, we deduce that $g.x_n+t_n u \rightarrow g.x+t u$ for $d_\xi$. By the  continuity of $g^{-1}:(\I,d_\xi)\rightarrow (\I,d_\xi)$ (by Theorem~\ref{thmResume sur les Dtheta2}~(\ref{itMixed_action of G lipschitz })), $x_n+ u\rightarrow x+ u$ for $d_\xi$. $\square$
\end{proof}

It remains to prove that if a sequence $(x_n,t_n)\in (\I\times \R_+)$ converges to $(x,0)$, for some $x\in \I$, then $(x_n+t_nu)$ converges to $x$. In order to prove this we first study the map $t\mapsto \rho_{-\infty}(x+tu)$.

\subsubsection*{Tits preorder on $\I$, vectorial distance on $\I$ and paths} Recall the definition of the Tits preorder $\leq$ on $\A$ from Subsection~\ref{subVectorial faces and Tits preorder}. As $\leq$ is invariant under the action of the Weyl group $W^v$, $\leq$ induces a preorder $\leq_A$ on every apartment $A$. Let $A$ be an apartment and $x,y\in A$ be such that $x\leq_A y$. Then by Proposition~5.4 of \cite{rousseau2011masures}, if $A'$ is an apartment containing $x,y$, $x\leq_{A'} y$. This enables to define the following relation $\leq$ on $\I$: if $x,y\in \I$, one says that $x\leq y$ if there exists an apartment $A$ containing $x,y$ and such that $x\leq_{A} y$. By Th{\'e}or{\`e}me~5.9 of \cite{rousseau2011masures}, this defines a preorder on $\I$ and one calls $\leq$ the \textit{Tits preorder}.

Let $x,x'\in \I$ be such that $x\leq x'$. Let $A$ be an apartment containing $x,x'$ and $f:A\rightarrow \A$ be an isomorphism of apartments. Then $f(x')-f(x)$ is in the Tits cone $\T$. Therefore there exists a unique $d^v(x,x')$ in $\overline{C^v_f}\cap W^v.\big(f(x')-f(x)\big)$. One calls $d^v$ the \textit{vectorial distance}.

Let $u\in \overline{C^v_f}$. A $u$-path is a piecewise linear continuous map $\pi:[0,1]\rightarrow \A$ such that each (existing) tangent vector $\pi'(t)$ belongs to $W^v.u$. Let $x,x'\in \I$ be such that $x\leq x'$, $A$ be an apartment containing them and $f:\A\rightarrow A$. We define $\pi_{x,x'}:[0,1]\rightarrow \I$ by $t\mapsto f\big((1-t)f^{-1}(x)+tf^{-1}(x)\big)$. By Proposition~5.4 of \cite{rousseau2011masures}, $\pi_{x,x'}$ does not depend on the choice of $A$.

Let $x\in \I$ and $u\in \overline{C^v_f}$. Then $d^v(x,x+u)=u$.

\begin{lemma}\label{lemImage of a segment by rho}
Let $x\in \I$ and $u\in \overline{C^v_f}$. Then $\rho_{-\infty}\circ\pi_{x,x+u}$ is a $u$-path.
\end{lemma}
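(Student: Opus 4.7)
The plan is to use Corollary~\ref{corSegments germes de quartiers} applied to the sector-germ $-\infty$ to cut the segment $[x,x+u]$ into finitely many sub-segments on each of which the retraction $\rho_{-\infty}$ is the restriction of a single isomorphism of apartments, and then to read off the tangent vector on each piece.

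Let $A$ be an apartment containing $x+\infty$, which exists by (MA3); by Proposition~\ref{defnProp faces}~(\ref{itAppart à l'infini}), $A$ also contains $x+u$. Since $u\in \overline{C^v_f}\subset \mathcal{T}$, we have $x\leq x+u$, and every intermediate point of $[x,x+u]_A$ is preordered with respect to the endpoints, so axiom (MAO) ensures that any intra-apartment sub-segment we produce is unambiguous. Applying Corollary~\ref{corSegments germes de quartiers} to $-\infty$, $A$ and the endpoints $x,x+u$ yields a subdivision $x=x_0,\ldots,x_k=x+u$, apartments $A_0,\ldots,A_{k-1}$ each containing $-\infty$ together with the corresponding sub-segment, and isomorphisms $f_i:A\overset{[x_i,x_{i+1}]}{\rightarrow}A_i$ fixing these sub-segments pointwise.

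For each $i$, fix $\phi_i:A_i\overset{-\infty}{\rightarrow}\A$. By the definition of $\rho_{-\infty}$, one has $\rho_{-\infty}(y)=\phi_i(y)$ for every $y\in A_i$, and since $f_i$ fixes $[x_i,x_{i+1}]$ pointwise, we obtain $\rho_{-\infty}|_{[x_i,x_{i+1}]}=(\phi_i\circ f_i)|_{[x_i,x_{i+1}]}$. Now $\phi_i\circ f_i:A\rightarrow\A$ is a composition of two isomorphisms of apartments, hence an isomorphism itself, and by the blanket convention of Subsection~\ref{subsecmasure} it is vectorially Weyl. Choosing $\psi:\A\rightarrow A$ with $\psi(+\infty)=+\infty$, it follows that $\phi_i\circ f_i\circ\psi\in (W^v\ltimes\A)\cap\mathrm{Aut}(\A)$, and we denote by $w_i\in W^v$ its linear part.

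With respect to $\psi$, the curve $\pi_{x,x+u}$ is $t\mapsto \psi(\psi^{-1}(x)+tu)$, because $\psi^{-1}(x+u)=\psi^{-1}(x)+u$ by definition of $+=+_{+\infty}$. Consequently on every open sub-interval of $[0,1]$ parametrizing some $(x_i,x_{i+1})$, the map $\rho_{-\infty}\circ\pi_{x,x+u}$ is affine with constant tangent vector $w_i\cdot u\in W^v\cdot u$. Piecewise linearity and continuity at the break points are immediate since adjacent pieces both evaluate to $\rho_{-\infty}(x_i)$ there, so $\rho_{-\infty}\circ\pi_{x,x+u}$ is a $u$-path. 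The most delicate point is the equality $\rho_{-\infty}|_{[x_i,x_{i+1}]}=(\phi_i\circ f_i)|_{[x_i,x_{i+1}]}$: it uses both (MAO), to identify $[x_i,x_{i+1}]_A$ with $[x_i,x_{i+1}]_{A_i}$, and the fact that $f_i$ fixes this common sub-segment.
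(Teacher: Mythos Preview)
Your argument is correct and in fact gives a self-contained proof, whereas the paper's own proof is a one-line citation of Theorem~6.2 of \cite{gaussent2008kac} (which asserts the stronger statement that the image is a Hecke path). Your route is different: instead of invoking the machinery of Hecke paths, you use Corollary~\ref{corSegments germes de quartiers} to cut $[x,x+u]$ into sub-segments on which $\rho_{-\infty}$ is the restriction of an apartment isomorphism, and then extract the linear part via the vectorially-Weyl convention. The key verifications you make are all sound: the composition $\phi_i\circ f_i\circ\psi$ is indeed vectorially Weyl (compositions of vectorially Weyl isomorphisms remain vectorially Weyl), so its linear part lies in $W^v$; the identity $\psi^{-1}(x+u)=\psi^{-1}(x)+u$ is precisely Definition/Proposition~\ref{defn/prop Addition}; and the identification $[x_i,x_{i+1}]_A=[x_i,x_{i+1}]_{A_i}$ via (MAO) is exactly what is needed to conclude $\rho_{-\infty}|_{[x_i,x_{i+1}]}=(\phi_i\circ f_i)|_{[x_i,x_{i+1}]}$. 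What your approach buys is independence from \cite{gaussent2008kac} for this particular lemma; what the paper's citation buys is brevity and, implicitly, the stronger Hecke-path property, though only the weaker $u$-path conclusion is used downstream.
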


\begin{proof}
This is a weak version of Theorem~6.2 of \cite{gaussent2008kac} (a Hecke path of the shape $u$ is a $u$-path satisfying some conditions, see Section~5 of \cite{gaussent2008kac} for the definition). $\square$
\end{proof}

Recall that the $\alpha_i^\vee$, for $i\in I$, denote the simple roots. Let $Q^\vee_{\R_+}=\{\sum_{i\in I} x_i\alpha_i^\vee|(x_i)\in (\R_+)^I\}\subset \A$.

\begin{lemma}\label{lemUpaths increasing for Q}
Let $u\in \overline{C^v_f}$ and $\pi:[0,1]\rightarrow \A$ be a $u$-path. Then $\pi(1)-\pi(0)-u\in Q^\vee_{\R_+}$.
\end{lemma}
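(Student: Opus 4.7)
The plan is to exploit the piecewise linear structure of $\pi$ and reduce the assertion to a classical property of orbits of dominant elements under $W^v$. By definition of a $u$-path, I would choose a subdivision $0 = t_0 < t_1 < \cdots < t_n = 1$ with $\pi$ affine on each $[t_k, t_{k+1}]$ and constant tangent $\pi'(t) = w_k u$ there, for some $w_0,\ldots,w_{n-1}\in W^v$. Integrating,
\[\pi(1) - \pi(0) = \sum_{k=0}^{n-1} (t_{k+1}-t_k)\, w_k u,\]
so, setting $\lambda_k = t_{k+1}-t_k \geq 0$ (with $\sum_k \lambda_k = 1$),
\[\pi(1) - \pi(0) - u = \sum_{k=0}^{n-1} \lambda_k\,(w_k u - u).\]
Since $Q^\vee_{\R_+}$ is a convex cone, it will suffice to control the individual terms $w_k u - u$ (equivalently, $u - w_k u$).

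The key step I would establish is the classical Coxeter-theoretic fact: for every $u \in \overline{C^v_f}$ and every $w \in W^v$, one has $u - wu \in Q^\vee_{\R_+}$. I would prove this by induction on the length $\ell(w)$ in $(W^v, \{r_i \mid i \in I\})$. The base case $w = 1$ is immediate. For $\ell(w) \geq 1$, I would pick a reduced expression for $w$ and write $w = r_i w'$ with $\ell(w') = \ell(w)-1$, so that $w'^{-1}\alpha_i \in \Phi^+$ by the standard length formula. Expanding,
\[u - wu = u - r_i(w'u) = (u - w'u) + \alpha_i(w'u)\,\alpha_i^\vee.\]
By induction, $u - w'u \in Q^\vee_{\R_+}$; and $\alpha_i(w'u) = (w'^{-1}\alpha_i)(u) \geq 0$, because $w'^{-1}\alpha_i \in \Phi^+ \subset Q^+$ is a non-negative $\N$-combination of the simple roots $\alpha_j$, each of which satisfies $\alpha_j(u) \geq 0$ (by $u \in \overline{C^v_f}$). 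Hence the second summand lies in $\R_{\geq 0}\alpha_i^\vee \subset Q^\vee_{\R_+}$, closing the induction.

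Combining the two steps,
\[u - \bigl(\pi(1) - \pi(0)\bigr) = \sum_{k=0}^{n-1} \lambda_k\,(u - w_k u) \in Q^\vee_{\R_+}.\]
This is the content that the lemma can honestly carry: the sign produced by the direct computation is the opposite of the one in the displayed conclusion $\pi(1)-\pi(0)-u \in Q^\vee_{\R_+}$ (indeed, already a single affine segment $\pi(t) = t\,wu$ with $u$ dominant and $wu \neq u$ satisfies $u - wu \in Q^\vee_{\R_+}$ but \emph{not} $wu - u \in Q^\vee_{\R_+}$, by the orbit comparison of the previous paragraph together with the fact that $Q^\vee_{\R_+}\cap(-Q^\vee_{\R_+})=\{0\}$). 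Reconciling with the statement as written thus appears to require correcting a sign either in the conclusion or in the convention on $Q^\vee_{\R_+}$. The only nontrivial obstacle in the argument is the orbit comparison, which is a classical consequence of the length function on the Coxeter group $W^v$.
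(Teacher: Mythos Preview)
Your approach is essentially the same as the paper's: decompose $\pi(1)-\pi(0)$ as a convex combination of vectors $w_k u$ and use the orbit comparison $u - wu \in Q^\vee_{\R_+}$ for $u\in\overline{C^v_f}$. The only difference is that the paper cites this last fact from Kac (Proposition~3.12~d)), whereas you supply a self-contained induction on $\ell(w)$.

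Your observation about the sign is entirely correct and worth recording: the paper's own proof shows $\pi'(t)-u\in -Q^\vee_{\R_+}$ and hence $\pi(1)-\pi(0)-u\in -Q^\vee_{\R_+}$, not $Q^\vee_{\R_+}$ as stated. This corrected sign is also the one actually used downstream (Lemma~\ref{lemMajoration of rho(x+tu) by rho(x+t'u)} writes $\rho_{-\infty}(x+tu)-\rho_{-\infty}(x)=tu-q_1$ with $q_1\in Q^\vee_{\R_+}$, i.e.\ the path endpoint minus the dominant vector lies in $-Q^\vee_{\R_+}$). So the statement carries a typo; your argument proves the version that is both true and needed.
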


\begin{proof}
Let $w\in W^v$. Then by Proposition 3.12 d) of \cite{kac1994infinite}, $w.u-u\in -Q^\vee_{\R_+}$. Thus for all $t$ such that $\pi'(t)$ is defined, $\pi'(t)-u\in -Q^\vee_{\R_+}$ and the lemma follows. $\square$
\end{proof}

 Let $|\ .\ |_0$ be a norm on $\A$ such that for all $q=\sum_{i\in i} q_i \alpha_i^\vee\in \bigoplus_{i\in I} \R\alpha_i^\vee$, $|q|_0=\sum_{i\in I} |q_i|$.

\begin{lemma}\label{lemMajoration of rho(x+tu) by rho(x+t'u)}
Let $x\in \I$, $u\in \overline{C^v_f}$ and $t,t'\in \R_+$ be such that $t\leq t'$. Then \[|\rho_{-\infty}(x+tu)-\rho_{-\infty}(x)|_0\leq (t+t')|u|_0+ |\rho_{-\infty}(x+t'u)-\rho_{-\infty}(x)|_0.\]
\end{lemma}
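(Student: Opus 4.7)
My plan is to reduce the inequality to a statement about the two ``residues'' $y_1 - y_0 - tu$ and $y_2 - y_0 - t'u$, which by the preceding lemmas on $u$-paths both lie in the positive coroot cone $Q^\vee_{\R_+}$, and then exploit that $|\ .\ |_0$ is additive on $Q^\vee_{\R_+}$.

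First, I would verify that $x \leq x+tu \leq x+t'u$ for the Tits preorder on $\I$. Writing $x+tu$ inside an apartment $A$ containing $x+\infty$ via the isomorphism $\psi:\A\to A$ with $\psi(+\infty)=+\infty$, the vectorial part of $\psi$ lies in $W^v$, which preserves $\T$; hence $(x+tu)-x$, viewed in $A$, lies in the Tits cone. The same is true for $(x+t'u)-(x+tu)=(t'-t)u$ by Lemma~\ref{lemCommutativite de ladition}. So the segments $\pi_{x,x+tu}$ and $\pi_{x+tu,x+t'u}$ are well-defined.

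Next, by Lemma~\ref{lemImage of a segment by rho} the image $\rho_{-\infty}\circ\pi_{x,x+tu}$ is a $tu$-path and $\rho_{-\infty}\circ\pi_{x,x+t'u}$ is a $t'u$-path. Applying Lemma~\ref{lemUpaths increasing for Q} to each, and setting
\[ y_0=\rho_{-\infty}(x),\quad y_1=\rho_{-\infty}(x+tu),\quad y_2=\rho_{-\infty}(x+t'u), \]
I get elements $q_1:=y_1-y_0-tu\in Q^\vee_{\R_+}$ and $q:=y_2-y_0-t'u\in Q^\vee_{\R_+}$. A completely analogous argument applied to the $(t'-t)u$-path $\rho_{-\infty}\circ\pi_{x+tu,x+t'u}$ shows that $q_2:=y_2-y_1-(t'-t)u\in Q^\vee_{\R_+}$, and by construction $q=q_1+q_2$.

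The crux is then the elementary observation that, since $|\ .\ |_0$ restricted to $\bigoplus_{i\in I}\R\alpha_i^\vee$ is the $\ell^1$-norm in the coordinates $(q_i)$, and both $q_1$ and $q_2$ have nonnegative coordinates, one has $|q_1|_0+|q_2|_0=|q_1+q_2|_0=|q|_0$, hence $|q_1|_0\leq |q|_0$. Combining this with the triangle inequality gives
\[ |y_1-y_0|_0 \leq t|u|_0 + |q_1|_0 \leq t|u|_0 + |q|_0 \leq t|u|_0 + |y_2-y_0|_0 + t'|u|_0, \]
which is exactly the claim. I do not foresee a real obstacle; the only point requiring mild care is that the Tits preorder used here is the global one on $\I$ (so that the segments $\pi_{x,x+tu}$ are defined) and that the two paths piece together coherently under $\rho_{-\infty}$, which follows from the additivity $q=q_1+q_2$ that drops out automatically from the definitions.
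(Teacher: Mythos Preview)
Your proof is correct and follows essentially the same route as the paper's: decompose the differences $y_1-y_0$ and $y_2-y_1$ using Lemmas~\ref{lemImage of a segment by rho} and~\ref{lemUpaths increasing for Q}, then use the additivity of $|\ .\ |_0$ on the coroot cone to get $|q_1|_0\leq |q_1+q_2|_0$ and conclude via the triangle inequality. The only cosmetic difference is a sign convention for the $q_i$ (the paper writes $y_1-y_0=tu-q_1$ rather than $tu+q_1$, consistent with the proof of Lemma~\ref{lemUpaths increasing for Q} where in fact $\pi(1)-\pi(0)-u\in -Q^\vee_{\R_+}$); this is immaterial since both $q_1,q_2$ lie in the same cone and $|\ .\ |_0$ is symmetric.
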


\begin{proof}
Write $\rho_{-\infty}(x+tu)-\rho_{-\infty}(x)=tu-q_1$ and $\rho_{-\infty}(x+t'u)-\rho_{-\infty}(x+tu)=(t'-t)u-q_2$, with $q_1,q_2\in Q^\vee_{\R_+}$, which is possible by Lemma~\ref{lemImage of a segment by rho} and Lemma~\ref{lemUpaths increasing for Q}. Then $\rho_{-\infty}(x+t'u)-\rho_{-\infty}(x)=t'u-q_1-q_2$. One has $|\rho_{-\infty}(x+tu)-\rho_{-\infty}(x)|_0\leq t|u|_0+|q_1|_0$. By choice of $|\ .\ |_0$, $|q_1|_0\leq |q_1+q_2|_0=|\rho_{-\infty}(x+t'u)-\rho_{-\infty}(x)-t'u|_0$, and the lemma follows. $\square$
\end{proof}

The following lemma completes the proof of Theorem~\ref{lemContinuity of + for Tm}.

\begin{lemma}
Let $u\in C^v_f$. Let $(x_n)\in \I^\N$ and $(t_n)\in \R_+^\N$ be such that $(x_n)$ converges for $d_\xi$ and $(t_n)$ converges to  $0$. Then $(x_n+t_n u)$ converges to  $\lim x_n$ for $d_\xi$.
\end{lemma}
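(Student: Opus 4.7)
The plan is to verify convergence via the two retractions $\rho_{+\infty}$ and $\rho_{-\infty}$ and then reassemble using Lemma~\ref{lemLien entre distance et retractions}. First I would use the Lipschitz $G$-action (Theorem~\ref{thmResume sur les Dtheta2}(\ref{itMixed_action of G lipschitz })), Lemma~\ref{lemAction of G on addition}, and the equivalence of all mixed distances (Theorem~\ref{thmResume sur les Dtheta2}(\ref{itMixed_equivalence distances})) to reduce to the case where $x\in \A$ and the addition is $+_{+\infty}$, so that $\rho_{\pm\infty}(x) = x$ and $d_\xi$ restricted to $\A$ induces the affine topology.

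For the positive retraction the argument is straightforward: by Lemma~\ref{lemLipschitziannite de l'addition}, $x_n + t_n u \to x$ for $d_{\theta_+}$, and $\rho_{+\infty}$ is Lipschitz for $d_{\theta_+}$ by Lemma~\ref{lemDistances restricted to apartments containing s}(\ref{itRetraction fixing s 1Lipschitz continuous}), so $\rho_{+\infty}(x_n + t_n u)\to x$ in $\A$, hence also for $d_\xi$ by Theorem~\ref{thmResume sur les Dtheta2}(\ref{itMixed_topology apartments}).

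The hard part will be the negative retraction, because the Lipschitz continuity of $+$ fails for $d_{\theta_-}$. Here I would introduce an auxiliary $T>0$ with $t_n \leq T$ for large $n$, and apply Lemma~\ref{lemMajoration of rho(x+tu) by rho(x+t'u)} at $x_n$ to obtain
\[|\rho_{-\infty}(x_n + t_n u) - \rho_{-\infty}(x_n)|_0 \leq (t_n + T)|u|_0 + |\rho_{-\infty}(x_n + Tu) - \rho_{-\infty}(x_n)|_0.\]
The decisive point is that $T$ is strictly positive, so the already-proved Lemma~\ref{lemContinuity of addition for Tm and tneq0} applies and gives $x_n + Tu \to x + Tu$ for $d_\xi$. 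Since $x+Tu\in \A$ and $\rho_{-\infty}$ is Lipschitz for $d_\xi$ by Theorem~\ref{thmResume sur les Dtheta2}(\ref{itMixed_retractions}), I obtain $\rho_{-\infty}(x_n + Tu)\to x+Tu$ while $\rho_{-\infty}(x_n)\to x$, so the last norm tends to $T|u|_0$. Passing to $\limsup_n$ yields $\limsup_n |\rho_{-\infty}(x_n + t_n u) - \rho_{-\infty}(x_n)|_0 \leq 2T|u|_0$, and since $T>0$ was arbitrary this limsup vanishes; combined with $\rho_{-\infty}(x_n)\to x$, this gives $\rho_{-\infty}(x_n + t_n u)\to x$ in $\A$.

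Finally I would apply Lemma~\ref{lemLien entre distance et retractions} with $A=\A$, $a=x$ and opposite sector-germs $\pm\infty$, producing a constant $k>0$ with
\[d_\xi(x, x_n + t_n u) \leq k\bigl(d_\xi(x, \rho_{-\infty}(x_n + t_n u)) + d_\xi(x, \rho_{+\infty}(x_n + t_n u))\bigr);\]
both right-hand terms tend to $0$ by the two preceding paragraphs (using that $d_\xi$ induces the affine topology on $\A$), which yields $x_n + t_n u \to x$ for $d_\xi$ and completes the proof.
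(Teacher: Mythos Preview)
Your proof is correct and follows the same overall strategy as the paper: reduce to $x\in\A$ via the $G$-action, handle $\rho_{+\infty}$ by the Lipschitz continuity of $+$ for $d_{\theta_+}$, handle $\rho_{-\infty}$ via Lemma~\ref{lemMajoration of rho(x+tu) by rho(x+t'u)}, and reassemble with Lemma~\ref{lemLien entre distance et retractions}.

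The only difference lies in the choice of the auxiliary parameter $t'$ in the $\rho_{-\infty}$ step. The paper takes $t'=T_u(x_n)+t_n$, which guarantees $x_n+t'u\in\A$ for every $n$ (since $t'\ge T_u(x_n)$); then $x_n+(T_u(x_n)+t_n)u=y_u(x_n)+t_nu\to x$ directly by the continuity of $y_u$ and $T_u$ (Lemma~\ref{lemLipschitziannite des T_nu et y_nu}), so no $\limsup$ or $T\to 0$ argument is needed. You instead fix $T>0$, invoke the already-proved case $t\neq 0$ (Lemma~\ref{lemContinuity of addition for Tm and tneq0}) to control $\rho_{-\infty}(x_n+Tu)$, and then let $T\to 0$. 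Your route is slightly more indirect but perfectly valid; the paper's choice avoids the extra limiting step at the cost of using $T_u$ explicitly.
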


\begin{proof}
By Theorem~\ref{thmResume sur les Dtheta2}, we can assume $\xi=\big( (|\ .\ |_0,+\infty),(|\ .\ |_0,-\infty)$.  Let $x=\lim x_n$. By the same reasoning as in the proof of Lemma~\ref{lemContinuity of addition for Tm and tneq0}: \begin{itemize}
\item[-] we can assume $x\in \A$,

\item[-] $x_n+(T_u(x_n)+t_n)u\rightarrow x$.\end{itemize} 

By Lemma~\ref{lemMajoration of rho(x+tu) by rho(x+t'u)}, for all $n\in \N$, \[|\rho_{-\infty}(x_n+t_nu)-\rho_{-\infty}(x_n)|_0\leq |\rho_{-\infty}(x_n+(T_u(x_n)+t_n)u)-\rho_{-\infty}(x)|_0+(T_u(x_n)+2t_n)|u|_0,\] and thus $\rho_{-\infty}(x_n+t_n u)\rightarrow \rho_{-\infty}(x)$.

By continuity of the map $+$ (Lemma~\ref{lemLipschitziannite de l'addition}) and the continuity of $\rho_{+\infty}$ (Corollary~\ref{corRetractions lipschitziennes}) for  $d_{\theta_+}$, $\rho_{+\infty}(x_n+t_nu)\rightarrow \rho_{+\infty}(x)$. Using Lemma~\ref{lemLien entre distance et retractions} we deduce that $(x_n+t_nu)$ converges to  $x$, which is the desired conclusion. $\square$
\end{proof}

\section{Contractibility of $\I$}\label{secContractilite de I}
In this section we prove the contractibility of $\I$ for $\mathscr{T}_+$, $\mathscr{T}_-$ and $\mathscr{T}_m$.

Let $|\ .\ |$ be a norm on $\A$, $\theta=(|\ .\ |,+\infty)$ and $\xi=\big( (|\ .\ |,+\infty),(|\ .\ |,-\infty)\big)$. To simplify the notation we write $+$ instead of $+_{+\infty}$.

\begin{prop}\label{propI se retracte fortement transitivement sur A}
Let $u\in C^v_f$. We define $\chi_u:\I\times [0,1]\rightarrow \I$ by \[\left\{\begin{aligned} & \chi_u(x,t)= x+\frac{t}{1-t}u &\mathrm{\ if\ } \frac{t}{1-t}<T_u(x)\\
&  \chi_u(x,t)= y_u(x) &\mathrm{\ if\ } \frac{t}{1-t}\geq T_u(x),\end{aligned}\right.\] where we set $\frac{1}{0}=+\infty >t$ for all $t\in \R$. Then $\chi_u$ is a strong deformation retract on $\A$ for $d_\theta$ and $d_\xi$.
\end{prop}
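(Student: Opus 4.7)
\medskip

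The plan is to verify the three pointwise requirements defining a strong deformation retract and then establish continuity, which is the only delicate part.

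For the pointwise conditions, note first that if $a\in\A$ then $T_u(a)=0$, so for every $t\in[0,1]$ we have $\tfrac{t}{1-t}\geq 0=T_u(a)$ and thus $\chi_u(a,t)=y_u(a)=a$. This simultaneously handles condition (3) and also the case $x\in\A$ of condition (1). If $x\notin\A$ then $T_u(x)>0$, so $\tfrac{0}{1-0}=0<T_u(x)$ and $\chi_u(x,0)=x+0\cdot u=x$. Finally $\chi_u(x,1)=y_u(x)\in\A$ by the convention $\tfrac{1}{0}=+\infty$, which gives (2).

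For continuity, let $d$ denote either $d_\theta$ or $d_\xi$, and let $(x_n,t_n)\to(x,t)$ in $(\I,d)\times[0,1]$. I split according to the sign of $T_u(x)-\tfrac{t}{1-t}$. If $\tfrac{t}{1-t}<T_u(x)$ (in particular $t<1$), then by the Lipschitz continuity of $T_u$ (Lemma~\ref{lemLipschitziannite des T_nu et y_nu}, which applies to both $d_\theta$ and $d_\xi$) we have $\tfrac{t_n}{1-t_n}<T_u(x_n)$ for $n$ large; hence $\chi_u(x_n,t_n)=x_n+\tfrac{t_n}{1-t_n}u$, which tends to $x+\tfrac{t}{1-t}u=\chi_u(x,t)$ by continuity of the map $(y,s)\mapsto y+su$ (Lemma~\ref{lemLipschitziannite de l'addition} for $d_\theta$, Theorem~\ref{lemContinuity of + for Tm} for $d_\xi$). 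If $\tfrac{t}{1-t}>T_u(x)$ (including the case $t=1$, with the convention $\tfrac{1}{0}=+\infty$), then $\tfrac{t_n}{1-t_n}\geq T_u(x_n)$ for $n$ large, so $\chi_u(x_n,t_n)=y_u(x_n)\to y_u(x)=\chi_u(x,t)$, again by Lemma~\ref{lemLipschitziannite des T_nu et y_nu}.

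The remaining, and main, obstacle is the equality case $\tfrac{t}{1-t}=T_u(x)$, where each of the two defining formulas may apply infinitely often along the sequence. Here $\chi_u(x,t)=y_u(x)=x+T_u(x)u$, and for every $n$ the value $\chi_u(x_n,t_n)$ is one of $x_n+\tfrac{t_n}{1-t_n}u$ or $y_u(x_n)=x_n+T_u(x_n)u$. Both of these expressions converge to $x+T_u(x)u=\chi_u(x,t)$: the first by continuity of addition, using that $\tfrac{t_n}{1-t_n}\to T_u(x)$; the second by continuity of $y_u$. Hence $\chi_u(x_n,t_n)\to\chi_u(x,t)$ regardless of which branch is selected at each $n$, which is exactly where the compatibility of the two defining formulas on the boundary locus $\{\tfrac{t}{1-t}=T_u(x)\}$ — built into the definition via $y_u(x)=x+T_u(x)u$ — is crucial. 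This completes the continuity argument and establishes that $\chi_u$ is a strong deformation retract onto $\A$ for both $d_\theta$ and $d_\xi$.
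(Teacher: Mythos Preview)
Your proof is correct and follows essentially the same approach as the paper: verify the pointwise conditions for a strong deformation retract, then prove continuity by taking a convergent sequence and splitting into cases according to the sign of $T_u(x)-\tfrac{t}{1-t}$, invoking Lemma~\ref{lemLipschitziannite des T_nu et y_nu} for the continuity of $T_u$ and $y_u$ and Lemma~\ref{lemLipschitziannite de l'addition} (for $d_\theta$) or Theorem~\ref{lemContinuity of + for Tm} (for $d_\xi$) for the continuity of addition. The paper treats only the case $\tfrac{t}{1-t}<T_u(x)$ and declares the others analogous; you spell out the boundary case $\tfrac{t}{1-t}=T_u(x)$ explicitly, which is a welcome clarification but not a different argument.
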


\begin{proof}
Let $x\in \A$ and $t\in [0,1]$. Then $T_u(x)=0$ and  thus $\chi_u(x,t)=y_u(x)=x$. 
Let $x\in \I$. Then $\chi_u(x,0)=x$ and $\chi_u(x,1)=y_u(x)\in \A$. It remains to show that $\chi_u$ is continuous for $d_\theta$ and $d_\xi$. Let $(x_n,t_n)\in (\I\times [0,1])^\N$ be a converging sequence for $d_\theta$ or $d_\xi$ and $(x,t)=\lim (x_n,t_n)$. Suppose for example that $\frac{t}{1-t}<T_u(x)$ (the case $\frac{t}{1-t}=T_u(x)$ and $\frac{t}{1-t}>T_u(x)$  are analogous). Then by the continuity of $T_u$ (Lemma~\ref{lemLipschitziannite des T_nu et y_nu}), $\frac{t_n}{1-t_n}<T_u(x_n)$ for $n$ large enough and thus by the continuity of the map $+$ (Lemma~\ref{lemLipschitziannite de l'addition} for $d_\theta$ and Theorem~\ref{lemContinuity of + for Tm} for $d_\xi$), $\chi_u(x_n,t_n)=x_n+\frac{t_n}{1-t_n}u\rightarrow x+\frac{t}{1-t}u=\chi_u(x,t)$. Therefore, $\chi_u$ is continuous, which concludes the proof. $\square$
\end{proof}

\begin{cor}\label{corContractilite de I}
The masure $\I$ is contractible for $\mathscr{T}_+$, $\mathscr{T}_-$ and $\mathscr{T}_m$.
\end{cor}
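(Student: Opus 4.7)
The plan is to combine Proposition~\ref{propI se retracte fortement transitivement sur A} (which gives a strong deformation retract of $\I$ onto $\A$) with the fact that $\A$ is already contractible for its own affine topology. Since all three topologies $\mathscr{T}_+$, $\mathscr{T}_-$, $\mathscr{T}_m$ induce on $\A$ exactly the affine topology (by Corollary~\ref{corTopologie induite sur les appartements} and Theorem~\ref{thmResume sur les Dtheta2}~(\ref{itMixed_topology apartments})), once $\I$ retracts onto $\A$ the contraction of $\A$ to, say, the origin $0$ by the linear homotopy $(a,s)\mapsto (1-s)a$ will finish the job for each of the three topologies.

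First, I would address $\mathscr{T}_+$ and $\mathscr{T}_m$ simultaneously. Fix any $u\in C^v_f$, and let $\chi_u:\I\times[0,1]\rightarrow\I$ be the strong deformation retract onto $\A$ produced by Proposition~\ref{propI se retracte fortement transitivement sur A}; this map is continuous for both $d_\theta$ (with $\theta=(|\,.\,|,+\infty)\in\Theta_+$) and $d_\xi$ (with $\xi=(\theta,(|\,.\,|,-\infty))\in\Xi$). Next, let $H:\A\times[0,1]\rightarrow\A$ be given by $H(a,s)=(1-s)a$; since the topology induced on $\A$ by $d_\theta$ (resp.\ $d_\xi$) is the affine one, $H$ is continuous as a map into $\I$ equipped with $\mathscr{T}_+$ (resp.\ $\mathscr{T}_m$). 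Concatenating $\chi_u$ and $H$ in the usual way yields a homotopy $F:\I\times[0,1]\rightarrow\I$ between $\mathrm{Id}_\I$ and the constant map with value $0$, continuous for $\mathscr{T}_+$ and for $\mathscr{T}_m$.

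For $\mathscr{T}_-$, I would proceed by exact symmetry: the whole construction leading to Proposition~\ref{propI se retracte fortement transitivement sur A} goes through if one swaps the roles of $+\infty$ and $-\infty$ throughout (all intermediate lemmas used---Lemma~\ref{lemLipschitziannite de l'addition}, Lemma~\ref{lemLipschitziannite des T_nu et y_nu}, the maps $T_u,y_u$---were explicitly noted by the author to admit such symmetric analogues). Thus for any $u\in -C^v_f$ one obtains a strong deformation retract $\chi_{-,u}:\I\times[0,1]\rightarrow\I$ onto $\A$ that is continuous for any $d_{\theta_-}$ with $\theta_-\in\Theta_-$, and the same concatenation with $H$ produces a contraction of $\I$ to $0$ in $\mathscr{T}_-$.

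There is no real obstacle: the only subtle point is that one genuinely uses the continuity of $\chi_u$ for the particular topology under consideration, which is why the joint $d_\theta$/$d_\xi$ statement of Proposition~\ref{propI se retracte fortement transitivement sur A} (and its negative analogue) is needed rather than a single universal retraction. Once this is in place, the contractibility of $\A$ with the affine topology is trivial, and the concatenation step is standard.
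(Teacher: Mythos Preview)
Your proposal is correct and follows essentially the same route as the paper: the paper also concatenates the strong deformation retract $\chi_u$ of Proposition~\ref{propI se retracte fortement transitivement sur A} with the linear contraction $(a,t)\mapsto 2(1-t)y_u(\cdot)$ of $\A$ to $0$ (written explicitly as a single map $\Upsilon_u$), and then invokes symmetry for $\mathscr{T}_-$.
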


\begin{proof}
Let $u\in C^v_f$. We define $\Upsilon_u:\I\times [0,1]\rightarrow \I$ by \[\left\{\begin{aligned} &\Upsilon_u(x,t) = \chi_u(x,2t) &\mathrm{\ if\ }t\leq \frac{1}{2}\\ &\Upsilon_u(x,t) = 2(1-t)y_u(x) &\mathrm{\ if\ t>\frac{1}{2}}.\end{aligned}\right.\] Then $\Upsilon_u$ is a strong deformation retract on $\{0\}$ for $d_\theta$ and $d_\xi$, which proves that $(\I,\mathscr{T}_+)$ and $(\I,\mathscr{T}_m)$ are contractible. By symmetry, $(\I,\mathscr{T}_-)$ is contractible.  $\square$
\end{proof}


\begin{thebibliography}{Rou17}

\bibitem[Bro89]{brown1989buildings}
K. Brown.
\newblock {\em Buildings}.
\newblock Springer, 1989.

\bibitem[BS14]{bennett2014axiomatic}
C. Bennett, P. Schwer.
\newblock On axiomatic definitions of non-discrete affine buildings.
\newblock {\em Adv. Geom.}, 14(3):381--412, 2014.
\newblock With an appendix by Koen Struyve.

\bibitem[BT72]{bruhat1972groupes}
F. Bruhat, J. Tits.
\newblock Groupes r{\'e}ductifs sur un corps local.
\newblock {\em Publications Math{\'e}matiques de l'IH{\'E}S}, 41(1):5--251,
  1972.

\bibitem[BT84]{bruhat1984groupes}
F. Bruhat, J. Tits.
\newblock Groupes r{\'e}ductifs sur un corps local.
\newblock {\em Publications Math{\'e}matiques de l'IH{\'E}S}, 60(1):5--184,
  1984.

\bibitem[Cha10]{charignon2010immeubles}
C. Charignon.
\newblock {\em Immeubles affines et groupes de {K}ac-{M}oody}.
\newblock PhD thesis, universit{\'e} Henri Poincar{\'e} {N}ancy 1, 2010.

\bibitem[GR08]{gaussent2008kac}
S. Gaussent, G. Rousseau.
\newblock Kac-{M}oody groups, hovels and {L}ittelmann paths.
\newblock In {\em Annales de l'institut Fourier}, volume~58, pages 2605--2657,
  2008.

\bibitem[GR14]{gaussent2014spherical}
S. Gaussent, G. Rousseau.
\newblock Spherical {H}ecke algebras for {K}ac-{M}oody groups over local
  fields.
\newblock {\em Annals of Mathematics}, 180(3):1051--1087, 2014.

\bibitem[H{\'e}b17]{hebertGK}
A. H{\'e}bert.
\newblock {G}indikin-{K}arpelevich finiteness for {K}ac-{M}oody groups over
  local fields.
\newblock {\em International Mathematics Research Notices},
  2017(22):7028--7049, 2017.

\bibitem[Kac94]{kac1994infinite}
V. Kac.
\newblock {\em Infinite-dimensional {L}ie algebras}, volume~44.
\newblock Cambridge university press, 1994.

\bibitem[Rou11]{rousseau2011masures}
G. Rousseau.
\newblock Masures affines.
\newblock {\em Pure and Applied Mathematics Quarterly}, 7(3):859--921, 2011.

\bibitem[Rou16]{rousseau2016groupes}
G. Rousseau.
\newblock Groupes de {K}ac-{M}oody d\'eploy\'es sur un corps local {II}.
  {M}asures ordonn\'ees.
\newblock {\em Bull. Soc. Math. France}, 144(4):613--692, 2016.

\bibitem[Rou17]{rousseau2017almost}
G. Rousseau.
\newblock {A}lmost split {K}ac–{M}oody groups over ultrametric fields.
\newblock {\em Groups Geometry, and Dynamics}, 11:891--975, 2017.

\end{thebibliography}
\bibliographystyle{plain}

\end{document}